\newtheorem{theorem}{Theorem}
\newtheorem{lemma}[theorem]{Lemma}
\newtheorem{proposition}[theorem]{Proposition}
\newtheorem{corollary}[theorem]{Corollary}
\newtheorem{conjecture}{Conjecture}
\newtheorem{question}[conjecture]{Question}
\newtheorem*{theorem-nonum}{Theorem 3}
\theoremstyle{definition}
\newtheorem*{definition*}{Definition}
\newtheorem{remark}[theorem]{Remark}
\newtheorem{example}[theorem]{Example}
\newcommand\ns[1]{ \left\{ {#1} \right\} }
\newcommand{\Z}{{\mathbb Z}}
\newcommand{\R}{{\mathbb R}}
\newcommand{\N}{{\mathbb N}}
\newcommand{\e}{\varepsilon}
\newcommand{\f}{\ell}
\newcommand{\F}{\mathcal{F}}
\newcommand{\E}{\mathbb{E}}
\newcommand{\borel}{\mathcal{B}}
\newcommand{\s}{\delta}
\newcommand{\fix}{{\Sigma}}
\newcommand{\muu}{\delta}
\newcommand\m{{\mu}}
\newcommand\B{{\mathcal B}}        
\newcommand{\les}{{\mathcal{L}}}
\newcommand\X{\Omega}
\newcommand\garbage[1]{}
\renewcommand{\P}{{\mathbb P}}
\newcommand{\dff}[1]{\textbf{\emph{#1}}}
\newcommand{\erk}{\hfill \ensuremath{\Diamond}} 
\DeclareMathOperator{\essinf}{ess \, inf}
\DeclareMathOperator{\esssup}{ess \, sup}
\DeclareMathOperator{\var}{var}
\DeclareMathOperator{\orb}{orb}
\newcommand{\num}[1]{\# | {#1}| }
\newcommand\ind{{\mathbf{1}}}
\newcommand\1{{\mathbf{1}}}
\newcommand{\bb}[1]{\mathbb{#1}}
\newcommand{\nua}{\nu^{\alpha}}
\begin{document}
	\author{Zemer Kosloff and Terry Soo}
	\title[Bernoulli actions and factors]{The orbital equivalence of Bernoulli actions and their Sinai factors}

	\address{Einstein Institute of Mathematics, 
		Hebrew University of Jerusalem, 
		\indent Edmund J. Safra Campus, Givat Ram. Jerusalem, 9190401, Israel.}
	\email{zemer.kosloff@mail.huji.ac.il}
	\urladdr{http://math.huji.ac.il/~zemkos/}

	\address{Department of Statistical Science,
		University College London,
		 Gower \indent Street,
		London WC1E 6BT,
		United Kingdom.}
	\email{math@terrysoo.com}
	\urladdr{http://www.terrysoo.com}

	\keywords{Krieger types, Bernoulli shifts, factors, amenable group \indent actions}
	\subjclass[2010]{37A40, 37A20, 37A35, 60G09}

\begin{abstract}
Given a countable amenable group $G$ and $\lambda \in (0,1)$, we give an elementary construction  of a  type-$\mathrm{III}_{\lambda}$  Bernoulli group action.  In the case where $G$ is the integers, we   show that our nonsingular Bernoulli shifts  have  independent and identically distributed  factors.  
\end{abstract}

\maketitle

\section{Introduction}

Let $(\Omega, \F, \mu)$ be a  $\sigma$-finite measure space.  We say that a measurable map $T: \Omega \to \Omega$ is \dff{nonsingular}  if the measure given by $\mu \circ T^{-1}$ is equivalent to $\mu$, 
that is, they have same null sets.  
The measure space endowed with a nonsingular map is a \dff{nonsingular dynamical system}, which is a model for studying dynamics of a system which is \emph{not}  at equilibrium; in the special more widely studied  case where $\mu(\Omega) =1$ and $\mu$ is invariant, $\mu \circ T^{-1} = \mu$, we obtain a \dff{measure-preserving} system which models dynamics at equilibrium.  If $T$ is invertible, then we also refer to it as an \dff{automorphism} and  also say the system is \dff{invertible}.       We say that $T$ is \dff{ergodic} if for all $E \in \F$, we have that if $\mu(E \triangle T^{-1}(E)) =0$, then $\mu(E)=0$ or $\mu(E^c) =0$.    The theory and stock of examples developed for the study of nonsingular dynamical systems has received considerable attention in the last decade, see the survey article by Danilenko and Silva \cite{dal-siv} and its updated version \cite{dal-siv-up}.  We hope to add to the stock of useful examples in the study of the isomorphism class of a nonsingular system.

Ergodic invertible nonsingular dynamical systems are usually classed by their Krieger ratio set \cite{kriegertypes,MR415341}, which are isomorphism invariants.    We will give more involved definitions in Section \ref{krs}.    We say that $T$ is of \dff{type-$\mathrm{II}$} if it admits an invariant $\sigma$-finite measure; if the invariant measure can be chosen to be finite, then $T$ is \dff{type-$\mathrm{II_1}$}, otherwise $T$ is \dff{type-$\mathrm{II_{\infty}}$}.  If $T$ is conservative and not type-$\mathrm{II}$, 
then we say it is 
\dff{type-$\mathrm{III}$}; type-$\mathrm{III}$ systems are further classified by a parameter $\lambda \in [0,1]$.     The existence of a type-$\mathrm{III}$ system was a long standing open problem of Halmos \cite{MR21963}, which was  resolved by Ornstein \cite{MR146350} who exhibited a nonsingular odometer system that  is of type-$\mathrm{III}$.  Ulrich Krengel and Benjamin Weiss   asked what
are the possible Krieger types of shift systems arising from independent sequences. 

Let $\N = \ns{0,1,2, \ldots}$ be the set of natural numbers.
Let $A$ be a set which will usually be finite, countable,  or a subset of $\R$ and let $(\rho_i)_{i \in \N}$ be a sequence of probability measures on $A$.  A \dff{one-sided Bernoulli shift} on $A$ is the system given by the product probability space $( A^{\N}, \borel, \bigotimes_{i \in \N} \rho_i)$,  endowed with the \dff{left-shift} given by $ (T a)_i = a_{i+1}$ for all $i \in \N$, where $\borel$ is the usual Borel product 
sigma-algebra.  In the case where all the measures $\rho_i$ are identical, then we say the Bernoulli shift is an 
\dff{independent and identically distributed} (i.i.d.) system.  
\dff{Two-sided Bernoulli shifts} are similarly obtained by replacing the natural numbers with the integers, with the left-shift becoming an invertible transformation.   

 Although, Bernoulli shifts are one of the most fundamental objects in ergodic theory and probability theory, it is  difficult to exhibit type-$\mathrm{III}$ Bernoulli shifts.  Two decades after Ornstein's type-$\mathrm{III}$ odometer, Hamachi \cite{MR662470}  constructed the first nonsingular Bernoulli type-$\mathrm{III}$ systems and then three decades later, Kosloff \cite{MR2851673} constructed one that he could verify was type-$\mathrm{III}_1$.  Both of their constructions are Bernoulli shifts on two symbols, where the corresponding probabilities are defined inductively.   
See also  Vaes and Wahl \cite[Section 6]{VaesWahl}   for examples of type-$\mathrm{III}_1$ nonsingular Bernoulli shifts where the  probabilities are specified by an explicit formula. 
Type-$\mathrm{III}_1$ Markov shifts  also play a crucial role in Kosloff's  construction of type-$\mathrm{III}_1$ Anosov diffeomorphisms \cite{MR3918266,kosloff-ansov}.

In this paper, we will focus on the case $\lambda \in (0,1)$ and we will discuss briefly how we could deal with the case $\lambda =1$ in Section \ref{conclude}.
We say that a measurable map $f : \R \to [0, \infty)$ is a \dff{density} if $\int_{\R} f(u)du =1$, where $du$ represents integration with the usual Lebesgue measure.    We will identify the  density $f$ with the probability  measure given by $E \mapsto \int_E f(u) du.$

\begin{theorem} 
	\label{main-exists}
	For every $\lambda \in (0,1)$, 	there exists a choice of densities $(f_n)_{n \in \N}$ 
	such that  the Bernoulli shift $\big(\R^{\Z}, \borel, \bigotimes_{n \in \Z}  f_n\big)$ is of type-$\mathrm{III}_\lambda$, where $f_n=\mathbf{1}_{(0,1)}$ for all $n <0$.
\end{theorem}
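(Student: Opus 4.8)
The plan is to produce the densities explicitly and then read off the Krieger type from the Radon--Nikodym cocycle of the shift. If $\mu=\bigotimes_{n\in\Z}f_n$ and $T$ is the left shift, then $\mu$ is a \emph{probability} measure and $\mu\circ T^{-k}$ is the product measure $\bigotimes_{n\in\Z}f_{n-k}$, so whenever the product converges the cocycle is $\omega_k(x):=\frac{d\,\mu\circ T^{-k}}{d\mu}(x)=\prod_{n\in\Z}\frac{f_{n-k}(x_n)}{f_n(x_n)}$. By Kakutani's dichotomy, $T$ is nonsingular exactly when $\sum_{n}\big(1-\int\sqrt{f_{n-1}f_n}\,\big)<\infty$; it is convenient to keep all $f_n$ supported on the same interval $(0,1)$, so that $\omega_k$ is $\mu$-a.e.\ finite and strictly positive (otherwise $\omega_k$ would vanish on a positive-measure set, violating nonsingularity). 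Recall that, for $\lambda\in(0,1)$, being of type $\mathrm{III}_\lambda$ means precisely that $T$ is conservative, ergodic, nonsingular, and has Krieger ratio set $r(T)=\{0\}\cup\lambda^{\Z}$.

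\textbf{The construction.} Fix $\lambda\in(0,1)$ and set $t=-\log\lambda>0$. Choose a sparse increasing sequence $S=\{n_1<n_2<\cdots\}\subseteq\{1,2,\dots\}$ with consecutive gaps at least $2$ and tending to infinity, and weights $(m_n)_{n\in S}$ with $\sum_{n\in S}m_n<\infty$. Put $f_n=\mathbf 1_{(0,1)}$ for $n\notin S$ (in particular $f_n=\mathbf 1_{(0,1)}$ for all $n<0$, as required), and for $n\in S$ let $f_n$ be the piecewise-constant density on $(0,1)$ taking the value $\lambda^{-1}$ on a set $E_n^{+}$ of Lebesgue measure $\lambda m_n$, the value $\lambda$ on a set $E_n^{-}$ of measure $m_n$, and the value $1$ on the remaining set $(0,1)\setminus(E_n^{+}\cup E_n^{-})$; the exponents are forced by $\int f_n=1$. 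Since consecutive elements of $S$ are at distance $\ge 2$, each nonzero term of the Kakutani sum equals a fixed positive constant times $m_n$ with $n\in S$ (a short computation gives $1-\int\sqrt{f_{n-1}f_n}=m_n(1-\sqrt\lambda)^2$ for $n\in S$), so $\sum_n\big(1-\int\sqrt{f_{n-1}f_n}\,\big)<\infty$ and $T$ is nonsingular.

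\textbf{The ratio set is contained in $\{0\}\cup\lambda^{\Z}$.} By construction every factor $f_{n-k}(x_n)/f_n(x_n)$ of $\omega_k$ lies in $\lambda^{\Z}$ (indeed $|\log|\le 2t$), and the $\mu$-probability that this factor differs from $1$ is $O(m_n)$ when $n\in S$, $O(m_{n-k})$ when $n-k\in S$, and $0$ otherwise. Since $\sum_{n\in S}m_n<\infty$, the Borel--Cantelli lemma shows that, for each fixed $k$, only finitely many factors differ from $1$ for $\mu$-a.e.\ $x$; hence $\log\omega_k$ is $\mu$-a.e.\ a finite sum of elements of $t\Z$, so $\omega_k\in\lambda^{\Z}$ $\mu$-a.e.\ for every $k$. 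Because $\lambda^{\Z}$ is discrete, this forces $r(T)\subseteq\{0\}\cup\lambda^{\Z}$; in particular $T$ is not of type $\mathrm{III}_1$.

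\textbf{Conservativity, ergodicity, and $\lambda\in r(T)$.} It remains to show $T$ is conservative and ergodic and that $\lambda\in r(T)$; combined with the previous paragraph and the fact that $r(T)\setminus\{0\}$ is a multiplicative subgroup of $\R_{>0}$ for conservative ergodic $T$, this yields $r(T)=\{0\}\cup\lambda^{\Z}$ and hence type $\mathrm{III}_\lambda$. Conservativity follows from the summability $\sum_{n\in S}m_n<\infty$ via the known conservativity criterion for nonsingular Bernoulli shifts (heuristically, only finitely many nontrivial factors appear in most coordinate windows, so the cocycle does not escape to infinity). For $\lambda\in r(T)$ we exhibit, inside an arbitrary cylinder, a positive-measure set on which the cocycle equals exactly $\lambda$: choosing $n_j\in S$ far to the right of the coordinates defining the cylinder and intersecting with the event $\{x_{n_j+1}\in E_{n_j}^{-}\}$ together with the requirement that every other relevant coordinate avoids the sets $E_m$, the independence of far-away coordinates gives positive measure, and the single engineered jump contributes precisely the factor $\lambda$; conservativity and ergodicity then promote this to the statement that $\lambda$ is an essential value of the cocycle. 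Ergodicity itself I expect to be the main obstacle: Kolmogorov's $0$--$1$ law is not directly available because $\mu$ is not shift-invariant, so one must invoke (or reprove) the known result that a conservative nonsingular Bernoulli shift over $\Z$ with summable one-sided Hellinger sums is ergodic, e.g.\ via a tail-triviality argument that keeps track of the fact that $\mu\circ T^{-k}\ll\mu$ with densities controlled by $\omega_k\in\lambda^{\Z}$. Alternatively, one may bypass the hands-on computation of $r(T)$ altogether by quoting the criteria that determine the Krieger type of a nonsingular Bernoulli shift directly from its Hellinger/ratio data, into which the construction above plugs immediately.
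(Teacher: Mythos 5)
Your construction cannot be of type $\mathrm{III}_\lambda$, because the very hypothesis you impose to get nonsingularity, $\sum_{n\in S}m_n<\infty$, forces the system to be of type $\mathrm{II}_1$. Indeed, your own Hellinger computation gives $1-\int_0^1\sqrt{f_n(u)}\,du=m_n(1-\sqrt\lambda)^2$ for $n\in S$, so by Kakutani's dichotomy the product measure $\bigotimes_{n\in\Z}f_n$ is \emph{equivalent} to the shift-invariant measure $\bigotimes_{n\in\Z}\mathbf 1_{(0,1)}$. Hence your shift admits an equivalent invariant probability measure, its ratio set is trivial, and in particular $\lambda$ is not an essential value; the heuristic in your last paragraph (one engineered jump contributing a factor $\lambda$ inside a cylinder) cannot be promoted to the essential-value property, since that property requires the construction to work, with a uniform proportion, for \emph{every} set of positive measure, and being $\mathrm{II}_1$ rules this out. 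Quantitatively, the probability of your engineered jump at $n_j\in S$ is of order $m_{n_j}$, and $\sum_{n\in S}m_n<\infty$ means that by Borel--Cantelli these jumps are too rare to hit a fixed fraction of an arbitrary cylinder.

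The paper's construction resolves exactly this tension, and the resolution is the key idea you are missing: the perturbation masses must be \emph{non-summable} ($|A_n|=a_n=\lambda^{-1}|B_n|$ with $\sum_n a_n=\infty$), so that there are enough coordinates where the density deviates from $1$ to make $\lambda$ an essential value, while nonsingularity is rescued not by summability of $a_n$ but by \emph{nesting} the sets $A_{n+1}\subset A_n$, $B_{n+1}\subset B_n$, so that the Kakutani sum only sees the increments $a_{n-1}-a_n$, which are summable. With that choice, conservativity requires genuine work (an $L^2$ bound on $1/(T^n)'$ of size $(\log n)^{c(\lambda)}$ plus Borel--Cantelli and Hopf's criterion), ergodicity follows from the $K$-property of half-stationary Bernoulli shifts together with the Silva--Thieullen theorem, and the verification that $\lambda$ is an essential value is carried out for the finite-permutation group action via the approximation lemma and a second-moment argument whose divergence $\sum\E Y_k\sim(1-\lambda)\log\log M\to\infty$ depends precisely on $\sum a_n=\infty$; this is then transferred to the shift through the discrete Maharam extension and the exchangeable sigma-field. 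There is also no off-the-shelf ``Hellinger/ratio data'' criterion you can quote instead: for your densities the only such criterion (Kakutani) returns type $\mathrm{II}_1$.
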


We will also show that the (half-stationary) Bernoulli shift in Theorem \ref{main-exists} is power weakly mixing; see Section \ref{pwm} for details.

Our construction is readily adapted to yield examples in the case of a countable number of symbols, and also in  the more general setting of a countable amenable group.   We give more precise definitions in Section \ref{a-group}.

\begin{theorem}
	\label{countable}
	Let $A$ be a countable set.	For every $\lambda \in (0,1)$, there exists a Bernoulli shift on $(A^\mathbb{Z},\mathcal{B},\bigotimes_{n\in\mathbb{Z}}\rho_n)$ that is of\label{key} type-$\mathrm{III}_\lambda$.
\end{theorem}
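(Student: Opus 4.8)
The plan is to realise the desired countable-alphabet system as a ``coded'' version of the real-valued Bernoulli shift $\big(\mathbb{R}^{\mathbb{Z}},\mathcal{B},\bigotimes_{n}f_n\big)$ produced in Theorem~\ref{main-exists}. The key point is that the densities $(f_n)$ there may be taken piecewise constant, so after passing to a common refinement we may fix one countable partition $\{I_a:a\in A_0\}$ of $\mathbb{R}$ into intervals, indexed by a countable set $A_0$, on each of which every $f_n$ is constant. Put $\rho_n(a):=\int_{I_a}f_n\,du$. Then the coordinatewise map $\phi\colon\mathbb{R}^{\mathbb{Z}}\to A_0^{\mathbb{Z}}$ sending $x$ to the sequence of interval-labels of its coordinates intertwines the two left shifts and pushes $\bigotimes_n f_n$ forward to $\bigotimes_n\rho_n$, so $\big(A_0^{\mathbb{Z}},\mathcal{B},\bigotimes_n\rho_n\big)$ is a nonsingular Bernoulli shift and is a factor of the system of Theorem~\ref{main-exists}.

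Next I would check that this passage loses essentially nothing. Since $f_n$ is constant on each $I_a$, the conditional law of the $n$th coordinate given its label is uniform on the corresponding interval, independently over $n$ and of the label sequence; rescaling each $I_a$ to $[0,1]$ identifies the real-valued system, equivariantly, with the direct product of $\big(A_0^{\mathbb{Z}},\bigotimes_n\rho_n,T\big)$ and the measure-preserving Bernoulli shift on $[0,1]^{\mathbb{Z}}$. Moreover, again because $f_n$ is constant on $I_a$, the Radon--Nikodym derivative $\prod_n f_{n-1}(x_n)/f_n(x_n)$ equals $\prod_n\rho_{n-1}(a_n)/\rho_n(a_n)$ and so depends only on the label sequence: the Radon--Nikodym cocycle of the product lives on the $A_0^{\mathbb{Z}}$ factor.

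Consequently the Maharam extension of the real-valued system is the direct product of the Maharam extension of $\bigotimes_n\rho_n$ with the weakly mixing Bernoulli automorphism on $[0,1]^{\mathbb{Z}}$; multiplying by a weakly mixing system does not change the space of ergodic components (each component $P_i$ merely becomes $P_i\times[0,1]^{\mathbb{Z}}$, still ergodic) nor the residual $\mathbb{R}$-action on it, so the two Maharam extensions have the same associated flow, hence the same Krieger type. Since the real-valued system is of type $\mathrm{III}_\lambda$ by Theorem~\ref{main-exists}, and it is ergodic, so that its factor $\bigotimes_n\rho_n$ is ergodic too, we conclude that $\bigotimes_n\rho_n$ is of type $\mathrm{III}_\lambda$. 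Finally, for an arbitrary countable set $A$ with at least $|A_0|$ elements one subdivides the $I_a$ further until there are $|A|$ intervals and relabels; subdivision keeps each $f_n$ constant on each piece, hence keeps the product structure and the cocycle unchanged, so the resulting Bernoulli shift on $A^{\mathbb{Z}}$ is again of type $\mathrm{III}_\lambda$. (One can take $|A_0|$ as small as two, covering every nontrivial $A$.)

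I expect the main obstacle to be bookkeeping rather than conceptual: one must verify from the construction in Theorem~\ref{main-exists} that the densities are indeed piecewise constant (or can be renormalised to be), and keep the Maharam picture organised carefully enough to invoke the fact that a relatively measure-preserving, relatively weakly mixing extension preserves the Krieger type. A self-contained alternative, matching the phrase ``readily adapted'', is to rerun the inductive construction of Theorem~\ref{main-exists} from the start with discrete probability measures $\rho_n$ on $A$ in place of the densities $f_n$ --- the coding above dictates which measures to take --- and to observe that the estimates identifying the ratio set as $\{0\}\cup\{\lambda^n:n\in\mathbb{Z}\}$ go through unchanged.
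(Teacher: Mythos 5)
Your argument reaches the correct conclusion, but by a genuinely different route from the paper's. The paper does not deduce Theorem \ref{countable} from Theorem \ref{main-exists} by coding: it constructs the discrete example directly in Section \ref{def-c} (an underlying probability mass function $\rho$ on $\N$ together with $\{\lambda,1,\lambda^{-1}\}$-valued densities $f_n$ relative to $\rho$, built from sets $A_n,B_n\subset\N$ with $\rho(A_n)=a_n=\lambda^{-1}\rho(B_n)$) and then, in Section \ref{conclude}, reruns the whole proof of Theorem \ref{main-exists} --- Kakutani, conservativity, tameness of the $\fix$-action, the discrete Maharam/$K$-automorphism argument --- with Lebesgue measure replaced by $\rho$ as the underlying measure. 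Your coded alphabet $A_0$, with $\rho_n(a)=\int_{I_a}f_n\,du$ over the common refinement of the $A_k$'s and $B_k$'s, reproduces (up to relabelling and index shifts) exactly the measures of Section \ref{def-c}, so both approaches land on the same object; what yours buys is that the Krieger type is \emph{transferred} from the real-valued system rather than re-proved, at the price of an argument about Maharam extensions of direct products. Your closing alternative (rerun the construction with discrete measures) is precisely the paper's proof.

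Two points in your transfer argument need repair. First, you justify that the ergodic components of $\widetilde{T}\cong\widetilde{S}\times B$ (with $S$ the coded shift and $B$ the i.i.d.\ uniform shift) are the sets $P_i\times[0,1]^{\Z}$ by appealing to weak mixing of $B$. The components $P_i$ are ergodic \emph{infinite} measure preserving systems, and weak mixing of a probability preserving $B$ does not in general force $P_i\times B$ to be ergodic; the correct hypothesis is mild mixing in the sense of Furstenberg and Weiss, which the Bernoulli shift $B$ does satisfy, so the conclusion survives once the justification is corrected. In fact you can bypass associated flows entirely: since the Radon--Nikodym cocycle of $T\cong S\times B$ depends only on the $S$-coordinate, applying the definition of essential value to rectangles $A\times[0,1]^{\Z}$ gives $e(T)\subseteq e(S)$, hence $\log\lambda\in e(S)$; since the derivatives of $S$ lie in $\lambda^{\Z}$ we have $e(S)\subseteq(\log\lambda)\Z$, and $S$ is ergodic as a factor of the ergodic $T$, so $S$ is of type $\mathrm{III}_\lambda$. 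Second, the parenthetical claim that one may take $|A_0|=2$ is false: any partition on which every $f_n$ is a.e.\ constant must refine, up to null sets, the family $\{A_k\setminus A_{k+1},\,B_k\setminus B_{k+1},\dots\}$ and therefore has infinitely many atoms of positive measure; a two-symbol coding of this kind would answer the paper's open Question \ref{Qfinitetypes}. This slip is harmless for Theorem \ref{countable} itself, where $A$ is countably infinite and a bijection with $A_0$ (or further subdivision of the $I_a$) suffices.
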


\begin{theorem}
	\label{group-exists}
	Let $G$ be a countable amenable group and $\lambda\in(0,1)$. There exists a product measure $\bigotimes_{g\in G}f_g$ on $[0,1]^G$ such that the corresponding Bernoulli action is nonsingular, ergodic and of stable type-$\mathrm{III}_\lambda$. 
	\end{theorem}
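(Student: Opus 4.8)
The plan is to run the construction behind Theorem~\ref{main-exists}, with the two-sided linear order on $\Z$ --- and in particular the half-line on which the densities are non-trivial --- replaced by a F{\o}lner exhaustion of $G$ together with an adapted ``level function''. Fix a nested F{\o}lner exhaustion $\{e\}\subseteq\tilde F_1\subseteq\tilde F_2\subseteq\cdots$ with $\bigcup_m\tilde F_m=G$, and pick a non-decreasing \emph{level function} $A\colon G\to\N$ that is constant on each shell $\tilde F_m\setminus\tilde F_{m-1}$, increases by at most $1$ from one shell to the next, and --- exploiting amenability --- grows fast enough that the ``surface sums'' $|\partial \tilde F_m|\,\lambda^{A|_{\partial\tilde F_m}}$ are summable while still slowly enough that $\sum_{g\in G}\lambda^{A_g}=\infty$; such an $A$ exists for every countably infinite amenable $G$ (this bookkeeping is the analogue of the growth rates chosen in the $\Z$-case). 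Then set
\[
f_g=\frac{2}{1+\lambda^{A_g}}\,\mathbf 1_{[0,1/2)}+\frac{2\lambda^{A_g}}{1+\lambda^{A_g}}\,\mathbf 1_{[1/2,1]},
\]
a density on $[0,1]$ equal to $\mathbf 1_{[0,1]}$ exactly where $A_g=0$; on $(\X,\mu):=\big([0,1]^G,\bigotimes_{g\in G}f_g\big)$ a coordinate carries a $\{0,1\}$-valued symbol with $\mu(x_g\in[1/2,1])=\lambda^{A_g}/(1+\lambda^{A_g})$, and $G$ acts by $(h\cdot x)_g=x_{h^{-1}g}$.

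Since $h_*\mu=\bigotimes_g f_{h^{-1}g}$, Kakutani's dichotomy says the action is nonsingular once $\sum_{g\in G}H^2(f_g,f_{h^{-1}g})<\infty$ for every $h$, where $H$ is the Hellinger distance; the summand vanishes unless $A_g\neq A_{h^{-1}g}$, which for fixed $h$ and large $m$ occurs only for $g$ near the surface of some $\tilde F_m$, where the Hellinger distance is $O(\lambda^{A_g\wedge A_{h^{-1}g}})$, and the summability is exactly the surface-sum condition imposed on $A$. The same computation identifies the Radon--Nikodym cocycle: writing $\sigma(h,x)=\sum_{g:\,x_g\in[1/2,1]}(A_{h^{-1}g}-A_g)$,
\[
\log\frac{dh_*\mu}{d\mu}(x)=\log\lambda\cdot\sigma(h,x)+\Psi(h),\qquad
\Psi(h)=\lim_{M}\Big(\!\!\sum_{g\in \tilde F_M\setminus h^{-1}\tilde F_M}\!\!\!\!\log(1+\lambda^{A_g})-\!\!\!\!\sum_{g\in h^{-1}\tilde F_M\setminus \tilde F_M}\!\!\!\!\log(1+\lambda^{A_g})\Big).
\]
The sum defining $\sigma$ has only finitely many non-zero terms $\mu$-almost everywhere (Borel--Cantelli, since the surface sites of $\tilde F_m$ carry symbol $1$ with summably small probability), so $\sigma$ is genuinely $\Z$-valued; and each of the two sums in $\Psi$ is $O(|h\tilde F_M\triangle \tilde F_M|\,\lambda^{A|_{\partial \tilde F_M}})\to 0$, so $\Psi\equiv 0$. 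Hence $\log\frac{dh_*\mu}{d\mu}$ takes values in $\log\lambda\cdot\Z$ for every $h$.

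Because the cocycle is $\log\lambda\cdot\Z$-valued, the Krieger ratio set satisfies $r\subseteq\{\lambda^{j}:j\in\Z\}\cup\{0\}$; conservativity follows from Kakutani summability averaged along the F{\o}lner sequence. To see $\lambda\in r$ one argues as in the $\Z$-case: given a cylinder $C$ supported on a finite set $K$ of coordinates and $\e>0$, choose $m$ large, a site $g_0$ with $A_{g_0}=m$, and $h$ with $A_{h^{-1}g_0}=m-1$ and $h^{-1}K\cap K=\emptyset$; on the positive-measure set of $x\in C$ whose only symbol $1$ outside $K\cup h^{-1}K$ is at $g_0$ (with $x$ suitably prescribed on $K\cup h^{-1}K$) one has $h\cdot x\in C$ and $\tfrac{dh_*\mu}{d\mu}$ within $\e$ of $\lambda$. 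Once the ergodicity step below shows the Maharam extension is ergodic, the cocycle is not a coboundary, so the system is genuinely type-$\mathrm{III}$, and being $\log\lambda\cdot\Z$-valued with $\lambda$ in its ratio set pins the type down to $\mathrm{III}_\lambda$ --- in fact to \emph{stable} $\mathrm{III}_\lambda$, which is precisely ergodicity of the Maharam extension.

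It thus remains to prove that the Maharam extension $G\curvearrowright(\X\times\Z,\ \mu\times(\text{counting}))$, $(h,x,n)\mapsto(h\cdot x,\,n+\sigma(h,x))$, is ergodic; this simultaneously yields ergodicity of the base and the word ``stable''. The obvious potential invariant --- the number of symbol-$1$ coordinates --- is $\mu$-almost surely infinite here, since $\sum_g\lambda^{A_g}=\infty$, so it is constant and causes no obstruction. The remaining argument is a $0$--$1$ law for nonsingular Bernoulli actions of amenable groups: one averages the $G$-action on a putative invariant set over the F{\o}lner sets $\tilde F_m$, using that $\mu$ is a product measure and that the contribution of coordinates outside $\tilde F_m$ to the cocycle is asymptotically negligible, to force triviality of the invariant $\sigma$-algebra of the skew product. \textbf{This last step is the main obstacle}: unlike the $\Z$-case there is no ``past'' $\sigma$-algebra generating everything and conservativity alone does not give ergodicity, so the F{\o}lner-averaging $0$--$1$ argument, together with the uniform tail estimates on $\sigma$ along the exhaustion that make it work, is the technical heart of the proof (and one may alternatively invoke an existing ergodicity criterion for nonsingular Bernoulli actions of amenable groups and verify its hypotheses for the present $(f_g)_{g\in G}$).
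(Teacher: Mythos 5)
Your construction of the densities (a level function $A$ along a F{\o}lner exhaustion, two-valued step densities, surface sums summable while $\sum_g\lambda^{A_g}=\infty$) is a plausible variant of the paper's construction, and the identification of the cocycle as $\log\lambda\cdot\Z$-valued can likely be made to work. But the proof has a genuine gap exactly where you flag it: ergodicity of the (Maharam extension of the) Bernoulli $G$-action is not established. For a general amenable group there is no $K$-automorphism argument, and "F{\o}lner-averaging a putative invariant set" is not a proof --- a nonsingular mean/ratio ergodic argument along F{\o}lner sets is precisely the delicate point, and invoking "an existing ergodicity criterion" without naming and verifying one leaves the heart of the theorem unproved. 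The paper's route is concrete: it first proves that the group $\fix_G$ of finite permutations of $G$ acts ergodically (Aldous--Pitman tameness) and is of type-$\mathrm{III}_\lambda$ (an explicit partial transformation built from finitely many transpositions, via the approximation lemma), and then transfers ergodicity of the discrete Maharam extension from $\fix_G$ to the $G$-action by the asymptotic-pairs/Hopf argument of Avraham-Re'em \cite{Nachi}, applied to the diagonal product with an arbitrary ergodic probability-preserving $G$-action --- which is what yields the word \emph{stable}. Your proposal also misstates stability: ergodicity of the Maharam extension of the action itself gives type-$\mathrm{III}_\lambda$, not stable type-$\mathrm{III}_\lambda$; the latter requires handling products with arbitrary ergodic p.m.p.\ actions, which your sketch never touches.

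Two further concrete problems. First, your essential-value argument uses "the positive-measure set of $x\in C$ whose only symbol $1$ outside $K\cup h^{-1}K$ is at $g_0$"; since $\sum_g\lambda^{A_g}=\infty$, almost every $x$ has infinitely many symbol-$1$ coordinates, so that set is null. (A repair must only constrain the finitely-relevant sites where $A_g\neq A_{h^{-1}g}$ and handle the interaction with $C\cap h^{-1}C$; the paper sidesteps all of this by computing essential values for $\fix_G$, where the Radon--Nikodym derivative is a finite product.) Second, "conservativity follows from Kakutani summability averaged along the F{\o}lner sequence" is an assertion, not an argument: Kakutani gives only nonsingularity, and the paper proves conservativity separately via a second-moment bound on $1/(T_g)'$, Markov plus Borel--Cantelli, and Hopf's criterion. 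As it stands, the proposal establishes (modulo bookkeeping) nonsingularity and the $\lambda^{\Z}$-valued cocycle, but not conservativity, not ergodicity, not the essential value, and not stability.
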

 
In a recent article of Bj{\"o}rklund, Kosloff, and Vaes  \cite{BjoKosVaes}, they proved that in the very special case when $G$ is a  {\em locally finite group}, then for every $\lambda\in (0,1)$ there is a type-$\mathrm{III}_\lambda$ Bernoulli action on $\{0,1\}^{G}$; their  construction makes use of the locally finite assumption and when applied to  Bernoulli shifts of $\Z$, and most of the other  amenable groups, the resulting Bernoulli action is dissipative, hence not ergodic.

Let $(\Omega, \F, \mu, T)$ and $(\Omega', \F', \mu', T')$ be two nonsingular systems.  
We recall that 
a measurable map $\phi: \Omega \to \Omega'$ is a \dff{nonsingular factor} if $\phi$ is \dff{equivariant} meaning that $\phi \circ T = T' \circ \phi$, and the push-forward $\mu \circ \phi^{-1}$ is equivalent to $\mu'$; in the case where $ \mu' =    \mu \circ \phi^{-1}$, we say that $\phi$ is a \dff{measure-preserving factor}.   If $\phi: \Omega \to \Omega'$ is a factor,  then we also refer to $(\Omega', \mu', T')$ as a \dff{factor} of the original system $(\Omega, \mu, T)$.      
 Note that in the case of two measure-preserving systems, assuming ergodicity, a nonsingular factor map will also be  measure-preserving.  
       Sinai \cite{Sinai} proved under the most general possible conditions when i.i.d.\ systems are factors of measure-preserving systems.

\begin{theorem}[Sinai factor theorem]
	\label{Sinai}
A non-atomic ergodic  measure-preserving system has all  i.i.d.\ systems of no greater Kolmogorov-Sinai entropy as factors. 
		\end{theorem}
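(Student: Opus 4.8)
The plan is to construct the desired i.i.d.\ factor by the classical approximate-and-pass-to-the-limit method. Fix the non-atomic ergodic measure-preserving system $(\Omega,\F,\mu,T)$ and a probability measure $\rho$ on an alphabet $A$ with $H(\rho)\le h(T)$, where $h(T)$ denotes the Kolmogorov--Sinai entropy; in particular $H(\rho)<\infty$. I take $A$ finite for notational convenience --- the countable case runs the same way, finiteness of $H(\rho)$ being all that is needed for the counting step below. The key reformulation is that a factor onto the i.i.d.\ $\rho$-system is exactly a measurable $A$-labelled partition $P=\{P_a\}_{a\in A}$ of $\Omega$ whose generated process $i\mapsto P(T^i x)$ (with $P(x)$ the label of the cell containing $x$) is i.i.d.\ with one-dimensional marginal $\rho$: the factor map is then $\phi(x)_i=a\iff T^i x\in P_a$, and equivariance $\phi\circ T=\sigma\circ\phi$ (with $\sigma$ the shift) is automatic, while the pushforward of $\mu$ is the i.i.d.\ $\rho$-measure precisely because the $P$-process is i.i.d.\ $\rho$. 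So the entire problem is to produce such a partition $P$, and I would obtain it as a limit $P=\lim_k P_k$ of partitions that are Cauchy in the complete metric $d(P,Q)=\sum_{a\in A}\mu(P_a\triangle Q_a)$, arranged so that for each fixed $N$ the distribution of the length-$N$ name of the $P_k$-process converges to $\rho^{\otimes N}$.

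The engine is a copying step carried out with a Kakutani--Rokhlin tower: given $\eta>0$ and $N$, one can find an $A$-labelled partition whose length-$N$ name distribution is within $\eta$ of $\rho^{\otimes N}$. Take a Rokhlin tower of some height $L\gg N$ whose levels exhaust all but $\eta$ of $\Omega$, split the base into pieces according to the product measure $\rho^{\otimes L}$, and label the levels of each resulting column by the corresponding word in $A^L$; the Shannon--McMillan--Breiman theorem, together with $H(\rho)\le h(T)$, ensures that $T$ possesses at least roughly $e^{LH(\rho)}$ typical names of length $L$ --- at least as many as there are typical $\rho$-words --- so this painting can be performed measurably and consistently, and the resulting process agrees with i.i.d.\ $\rho$ on windows of length $N$ up to the $\eta$ coming from the tower's complement and from the dependence between successive passages through the tower.

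Iterating this while keeping control is where the real work lies. Starting from a crude $P_1$, at stage $k$ I would upgrade $P_{k-1}$ to $P_k$ so that simultaneously (i) $P_k$ approximates i.i.d.\ $\rho$ on windows of size $N_k\uparrow\infty$ to within $\eta_k$, with $\sum_k\eta_k<\infty$, and, crucially, (ii) $\sum_k d(P_k,P_{k-1})<\infty$. The \textbf{main obstacle} is achieving (ii) alongside (i): one cannot afford to re-paint the whole tower from scratch at each stage, so one must show the previous partition admits a \emph{local correction} on a set of small measure that nonetheless removes the joint-distribution defects. This is the second, essential use of the entropy hypothesis --- the slack $h(T)-H(\rho)\ge 0$, again via Shannon--McMillan--Breiman, furnishes enough spare coding capacity to reassign only a small-measure family of columns; concretely it is a $\bar d$-matching (marriage-lemma) argument pairing the typical $T$-names that $P_{k-1}$ reads off with typical $\rho$-words, and in the factor setting one needs only this one-sided copying rather than the full finitely-determined property required for the isomorphism theorem. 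Granting (i) and (ii), the $P_k$ converge in measure to a partition $P$; passing (i) to the limit makes every finite-dimensional marginal of the $P$-process equal to the corresponding marginal of i.i.d.\ $\rho$, hence the process is i.i.d.\ $\rho$, and $\phi(x)_i=a\iff T^i x\in P_a$ is the sought factor map. Ergodicity and non-atomicity are used only to make the Rokhlin lemma available and to apply Shannon--McMillan--Breiman with the constant entropy limit $h(T)$.
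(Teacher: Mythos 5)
First, note that the paper does not prove this statement at all: Theorem \ref{Sinai} is quoted as a classical result with a citation to Sinai, so there is no in-paper argument to compare yours with. Your outline is the standard partition-theoretic strategy (essentially the Ornstein-style route): reduce the problem to finding an $A$-labelled partition whose associated process is i.i.d.\ $\rho$, build approximating partitions by painting Rokhlin towers with $\rho^{\otimes L}$-distributed names, and pass to a limit partition that is Cauchy in the metric $d(P,Q)=\sum_a\mu(P_a\triangle Q_a)$ while the finite-window distributions converge to $\rho^{\otimes N}$. That reduction, and the final limiting step granted your conditions (i) and (ii), are correct.

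The genuine gap is exactly at the point you yourself flag as ``where the real work lies'': the stage-$k$ correction that improves the joint-distribution error to $\eta_k$ while changing the partition only on a set of summable measure is the entire content of Sinai's theorem, and your proposal names it without proving it. Two specific problems. First, the mechanism you invoke --- ``the slack $h(T)-H(\rho)\ge 0$ furnishes enough spare coding capacity'' --- is empty in the equality case $H(\rho)=h(T)$, which the theorem covers; with zero slack there is no surplus of typical $T$-names over typical $\rho$-words, and the counting/marriage argument as stated does not produce the required small-measure reassignment. Second, the assertion that a partition whose process is merely close to i.i.d.\ $\rho$ in finite-dimensional distribution admits a small (in $d$ or $\bar{d}$) modification that is much closer is not a soft combinatorial fact: for an i.i.d.\ target it is essentially Ornstein's copying/fundamental lemma (equivalently, the finitely determined property of i.i.d.\ processes), a substantial theorem whose proof requires quantitative Shannon--McMillan--Breiman estimates and a careful $\bar{d}$-joining construction, not just a bijection between typical-name sets. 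Without supplying that lemma (or Sinai's original alternative to it), the induction delivering (i) and (ii) simultaneously is not established, so the proposal is an accurate road map of the known proof rather than a proof. A smaller point: the paper emphasizes that the theorem holds for one-sided, non-invertible systems, and your tower-painting argument should be checked (or routed through natural extensions together with a measurability argument) in that setting.
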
 

Notice that Sinai's theorem holds  even in the non-invertible one-sided setting. Recall that  Kolmogorov-Sinai entropy  \cite{MR0103254,MR0103256} is defined for all measure-preserving systems and for an i.i.d.\ system associated with the finite probability space $(A, \rho)$ the entropy of the dynamical system given by the usual static Shannon entropy: 
$ -\sum_{c \in A} \rho(c) \log( \rho(c)).$

The Sinai factor theorem was one of the early triumphs of entropy theory which has spectacular results in identifying and classifying i.i.d.\ systems  in the measure-preserving context \cite{MR3052869} and entropy has been referred to as ``dynamical systems most glorious number'' \cite{MR2342699}.   However, the role of entropy in the general nonsingular setting remains unclear, with many results that may appear to violate intuition built from our better understanding of the measure-preserving case.  There is a  striking contrast between Krieger's finite generator theorem \cite{Krieger1} for the measure-preserving case and Krengel's generator theorem \cite{Krengel} in the nonsingular case, where the former is a landmark result in entropy theory and the latter makes no reference to entropy at all.   See \cite[Section 9]{dal-siv-up} for more information about entropy and other invariants in nonsingular dynamics.

It will become apparent that our constructions involving factors are grounded in the measure-preserving realm.   
  Given a Borel set $E \subset \R$ the \dff{conditional density of $f$ on $E$} is given by the renormalized density 
$$ \Big(\int_E f(u)du \Big)^{-1} f \mathbf{1}_E.$$

\begin{theorem}
	\label{main-fac}  Fix a Borel set $E \subset [0,1]$, and a density $g$ with support $E$.  There exists a measurable map $\phi: \R^{\N} \to [0,1]^{\N}$ such that if $(f_n)_{n \in \N}$ is a sequence of densities with the same conditional density $g$ on the set $E$, with 
\begin{equation}
\label{inf-E}
\sum_{n \in \N} \int_E f_n(u) du = \infty,
\end{equation}
and 
the 
associated Bernoulli shift $(\R^{\N}, \borel, \bigotimes_{n \in \N} f_n)$  is nonsingular,  
then $\phi$ is a measure-preserving factor from the nonsingular system  to 
the
i.i.d.\ system given by the product of Lebesgue measure restricted to the unit interval.    
		\end{theorem}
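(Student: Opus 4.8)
The plan is to build a single ``zeroth‑coordinate'' function $\psi$ and set $\phi(x)_m:=\psi(T^mx)$, so that equivariance $\phi\circ T=S\circ\phi$ (with $S$ the left shift on $[0,1]^{\N}$) holds automatically; the whole content is then to choose $\psi$ so that the sequence $\big(\psi(T^mx)\big)_{m\in\N}$ is i.i.d.\ uniform on $[0,1]$ under $\mu:=\bigotimes_n f_n$. First I would record the decomposition forced by the hypothesis: writing $p_n:=\int_E f_n(u)\,du$, the fact that every $f_n$ has conditional density $g$ on $E$ means $f_n\ind_E=p_n\,g$, so $f_n=p_n\,g+(1-p_n)h_n$ where $h_n$ is a density supported in $E^c$. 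Consequently, under $\mu$ with coordinate variables $X_n$, the indicators $\e_n:=\ind[X_n\in E]$ are independent with $\P(\e_n=1)=p_n$, and, conditionally on the success pattern $(\e_n)_n$, the variables $(X_n)_{n:\e_n=1}$ at the successful sites are independent, each distributed as $g$. By \eqref{inf-E} and the second Borel--Cantelli lemma, a.s.\ infinitely many sites are successful; I restrict everything to the $T$‑invariant conull set $\X_0$ on which this holds, and write $n_1<n_2<\cdots$ for the successful sites.

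Next I would extract a hidden i.i.d.\ sequence. Let $G$ be the (continuous) distribution function of $g$; by the probability integral transform $G(X_{n_j})$ is uniform on $[0,1]$ whenever $X_{n_j}\sim g$. Hence the variables $W_j:=G(X_{n_j})$, $j\ge 1$, are i.i.d.\ uniform on $[0,1]$ \emph{and independent of} $\mathcal G:=\sigma(\e_n:n\in\N)$, since their conditional law given $\mathcal G$ is this fixed product law irrespective of the success pattern (this is precisely where the common‑conditional‑density hypothesis is used). Fix once and for all a measurable ``unpacking'' map sending a uniform variable $U$ to a sequence $(U^{(1)},U^{(2)},\dots)$ of i.i.d.\ uniforms (e.g.\ deinterleave the binary digits of $U$ along a bijection $\N\to\N\times\N$); applying it to each $W_j$ gives an array $\big(W_j^{(\ell)}\big)_{j,\ell\ge1}$ that is i.i.d.\ uniform on $[0,1]$ and still independent of $\mathcal G$.

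Now define, for $x\in\X_0$, the distance to the next successful site $r(x):=\min\{k\ge0:x_k\in E\}$ and
\[
	\psi(x):=\big(G(x_{r(x)})\big)^{(r(x)+1)},
\]
i.e.\ look forward to the first coordinate lying in $E$, push its value through $G$, and read off the $(r(x)+1)$‑th entry of its unpacking; set $\phi(x)_m:=\psi(T^mx)$. A short computation gives $\phi(X)_m=W_{c(m)+1}^{(r_m+1)}$, where $r_m:=\min\{k\ge0:\e_{m+k}=1\}$ and $c(m):=\#\{j:n_j<m\}$, because the first successful site at or after $m$ is $n_{c(m)+1}=m+r_m$. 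The combinatorial heart of the argument is that
\[
	\kappa:\ m\longmapsto (c(m)+1,\ r_m+1)\in\{1,2,\dots\}^2
\]
is injective on $\X_0$: the set $\{m:c(m)=v\}$ is a block of consecutive integers ending at the $(v+1)$‑th successful site, and on that block $r_m=n_{v+1}-m$ is strictly decreasing, so the pair $(c(m),r_m)$ pins down both the block and the position within it. Since $\kappa$ is $\mathcal G$‑measurable while $\big(W_j^{(\ell)}\big)$ is an i.i.d.\ uniform array independent of $\mathcal G$, conditioning on $\mathcal G$ exhibits $\big(\phi(X)_m\big)_m=\big(W_{\kappa(m)}\big)_m$ as an i.i.d.\ uniform‑$[0,1]$ sequence; as this law is the same for every realization of $\mathcal G$, it holds unconditionally. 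Hence $\mu\circ\phi^{-1}$ is exactly the product of Lebesgue measure on $[0,1]$, and together with equivariance and the a.s.\ definition of $\phi$ on the invariant conull set $\X_0$ this makes $\phi$ a measure‑preserving factor onto the stated i.i.d.\ system. Note that $\phi$ depends only on $E$ and $g$, not on the particular sequence $(f_n)$; nonsingularity of $\bigotimes_n f_n$ is invoked only to ensure the source is a genuine nonsingular system so that ``factor'' makes sense.

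The one genuinely non‑routine step is the injectivity of $\kappa$ — equivalently, the realization that the forward distance to the next success is the correct index with which to unpack $W_{c(m)+1}$, so that the several sites lying in a common gap consume \emph{different} sub‑uniforms of the same $W_j$. Everything else (the mixture decomposition, Borel--Cantelli, the probability integral transform, splitting one uniform into countably many, and the tautological equivariance of a map defined through a single coordinate function) is bookkeeping.
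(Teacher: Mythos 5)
Your proposal is correct and is essentially the paper's own proof: you harvest a uniform at each ``special'' site $X_s\in E$ via the conditional distribution function $G$, unpack it into countably many i.i.d.\ uniforms, and hand the $(s-k)$-th one to each site $k$ reporting forward to $s$, which is exactly the paper's construction (your injectivity of $\kappa$ is just a careful write-up of the step the paper labels ``easy to verify''). No gaps; the only differences are cosmetic, such as phrasing $\phi$ through a zeroth-coordinate function and the $+1$ shift in the unpacking index.
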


Recall that a real-valued random variable $Z$ is continuous with density $f$ if the law of $Z$ given by $\P(Z \in \cdot)$ is absolutely continuous with respect to Lebesgue measure with $f$ as its density.  Theorem \ref{main-fac} states that there exists a deterministic equivariant function $\phi$, which only depends on the set $E$ and the common density $g$, such that if $X = (X_i)_{i \in \N}$ is a sequence of continuous random variables all with the same conditional law given $E$,  then $\phi(X)$ is a sequence of independent random variables that are all uniformly distributed on the unit interval.

Theorem \ref{main-fac} and its proof also hold in the case of two-sided Bernoulli shifts.  
        The densities $f_n$ in Theorem \ref{main-exists} can be chosen to satisfy
       the conditions of Theorem \ref{main-fac}, with a uniform distribution serving as the common conditional distribution;
       see Section \ref{def-l}.   
       %
       Also recall that any real-valued random variable can be constructed as an explicit function of a uniform random variable \cite[Lemma 3.22]{kal2}.
Thus Theorem \ref{main-exists} together with Theorem \ref{main-fac} give various examples of nonsingular Bernoulli shifts which have \emph{all} i.i.d.\ systems as factors.      

\begin{corollary}
	\label{iid-fac}
	 For every $\lambda \in (0,1)$, the   type-$\mathrm{III}_\lambda$  Bernoulli shift from Theorem \ref{main-exists}  has all  i.i.d.\ factors.
		\end{corollary}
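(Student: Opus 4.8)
The plan is to assemble the corollary from Theorems \ref{main-exists} and \ref{main-fac}, together with the elementary fact that every i.i.d.\ system is a coordinatewise push-forward of the i.i.d.\ system with uniform marginals. First I would invoke Theorem \ref{main-exists} to fix, for the given $\lambda\in(0,1)$, a sequence of densities $(f_n)_{n\in\Z}$ with $f_n=\mathbf 1_{(0,1)}$ for $n<0$ such that the Bernoulli shift $(\R^{\Z},\borel,\bigotimes_{n}f_n)$ is nonsingular and of type-$\mathrm{III}_\lambda$. As announced in the discussion preceding the corollary and carried out in Section \ref{def-l}, this same sequence can be arranged to satisfy the hypotheses of Theorem \ref{main-fac} with $E=(0,1)$ and common conditional density $g=\mathbf 1_{(0,1)}$; that is, each $f_n$ restricted to $(0,1)$ is a constant multiple of $\mathbf 1_{(0,1)}$, and $\sum_{n}\int_{(0,1)}f_n(u)\,du=\infty$, so that \eqref{inf-E} holds.

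Next I would apply the two-sided version of Theorem \ref{main-fac}, which is valid by the remark immediately following its statement. This yields a single measurable, shift-equivariant map $\phi\colon\R^{\Z}\to[0,1]^{\Z}$ that is a measure-preserving factor from the type-$\mathrm{III}_\lambda$ system of the previous paragraph onto the i.i.d.\ measure-preserving system $([0,1]^{\Z},\borel,\bigotimes_{n}\mathrm{Leb},\text{shift})$, where $\mathrm{Leb}$ denotes Lebesgue measure on the unit interval.

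It then remains to reduce an arbitrary i.i.d.\ system to this uniform one. Given an i.i.d.\ system built from a probability space $(A,\rho)$, with $A$ finite, countable, or standard Borel, Kallenberg's lemma \cite[Lemma 3.22]{kal2} supplies a measurable map $h\colon[0,1]\to A$ with $h_*(\mathrm{Leb})=\rho$. The product map $\psi\colon[0,1]^{\Z}\to A^{\Z}$ given by $\psi\big((x_n)_{n\in\Z}\big)=(h(x_n))_{n\in\Z}$ commutes with the shift and pushes $\bigotimes_{n}\mathrm{Leb}$ forward to $\bigotimes_{n}\rho$, so it is a measure-preserving factor onto the given i.i.d.\ system. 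Composing, $\psi\circ\phi$ is equivariant and carries the $\sigma$-finite measure of the nonsingular system to $\bigotimes_{n}\rho$, hence is a measure-preserving factor from the type-$\mathrm{III}_\lambda$ Bernoulli shift onto the prescribed i.i.d.\ system; since that system was arbitrary, the corollary follows. Alternatively, the last reduction can be deduced from Sinai's factor theorem, Theorem \ref{Sinai}: the uniform i.i.d.\ system is non-atomic, ergodic, and has infinite Kolmogorov--Sinai entropy, so every i.i.d.\ system, having no greater entropy, occurs as a factor of it.

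The genuine content sits in Theorems \ref{main-exists} and \ref{main-fac}; the corollary itself is bookkeeping. The one point needing care is the compatibility of the two theorems, namely that the densities produced in the construction of the type-$\mathrm{III}_\lambda$ example really can be chosen to satisfy \eqref{inf-E} and to have a common conditional density on $E$ — this is exactly what is verified in Section \ref{def-l} — and, as a minor secondary point, that the notion of factor is stable under the composition $\psi\circ\phi$ of a measure-preserving factor out of a $\sigma$-finite system with a measure-preserving factor between probability systems, which is immediate since push-forwards compose.
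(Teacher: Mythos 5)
Your overall route coincides with the paper's: compose the measure-preserving factor of Theorem \ref{main-fac} (in its two-sided form) onto the i.i.d.\ uniform system with a coordinatewise push-forward supplied by \cite[Lemma 3.22]{kal2} (or, less economically, by Sinai's theorem), the only substantive point being that the densities of Theorem \ref{main-exists} satisfy the hypotheses of Theorem \ref{main-fac}. That compatibility check --- the very point you flag as ``needing care'' --- is where your write-up is wrong. You take $E=(0,1)$ and assert that each $f_n$ restricted to $(0,1)$ is a constant multiple of $\mathbf 1_{(0,1)}$. For the densities of Section \ref{def-l} this is false: for $n\geq 2$ the density $f_n$ takes the value $\lambda$ on $A_n$, $\lambda^{-1}$ on $B_n$, and $1$ elsewhere, so its conditional density on $(0,1)$ is essentially $f_n$ itself and varies with $n$. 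With $E=(0,1)$ the hypothesis of Theorem \ref{main-fac} --- a single conditional density $g$ shared by all the $f_n$ --- fails, and the factor map of that theorem is not available for this choice of $E$.

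The repair is exactly what Section \ref{def-l} states: take $E=[0,1]\setminus(A_2\cup B_2)$. Since the intervals are decreasing, $A_n\subset A_2$ and $B_n\subset B_2$, every $f_n$ with $n\geq 2$ is identically $1$ on $E$, and $f_n\equiv 1$ trivially for $n\leq 1$; hence the common conditional density on $E$ is the uniform density on $E$, and $\int_E f_n(u)\,du=1-|A_2|-|B_2|$, which is a positive constant by the strict inequality \eqref{less-one}, so \eqref{inf-E} holds. With this choice of $E$ the remainder of your argument --- the two-sided version of Theorem \ref{main-fac}, the coordinatewise map $\psi$ built from a measurable $h\colon[0,1]\to A$ pushing Lebesgue measure to $\rho$, and the observation that the composition $\psi\circ\phi$ is again a measure-preserving factor out of the nonsingular system --- is correct and is precisely the proof the paper intends for Corollary \ref{iid-fac}.
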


A closely related result is given by Rudolph and Silva \cite[Theorem 2.1]{rud-silva-join}, where the machinery of joinings is adapted in the nonsingular setting to construct  type-$\mathrm{III}_{\lambda}$  systems for $\lambda \in (0,1)$ as a nonsingular joinings of  two measure-preserving systems; thus it follows that i.i.d.\ factors can be obtained from these  type-$\mathrm{III}_{\lambda}$  systems, which are not given by a product measure.

Note that it is not known whether a Bernoulli shift on a \emph{finite} number of symbols can exhibit all the different types.

\begin{question}
	\label{Qfinitetypes}
	  Let $\lambda \in [0,1]$.   Does there exists a Bernoulli shift on a finite number symbols that is of type-$\mathrm{III}_{\lambda}$?
 	\end{question}

We already know that the answer to Question \ref{Qfinitetypes} is \emph{yes} for $\lambda=1$.  It turns out that   Hamachi's original example is also of type-$\mathrm{III}_1$ \cite[Section 4]{KosKMaharam}. See Section \ref{dis-rv} for more information.

We prove 
the  
following variant of Theorem \ref{main-fac} in the case of a finite number of symbols.  Whereas, entropy did not play a role in the statement of Theorem \ref{main-fac}, it will be prominent in the next theorem.  Let $A$ be a finite set, and $E \subseteq A$.  Let $\beta$ be a probability measure on the finite set $A$.  Suppose $\beta(E) >0$, then   the \dff{conditional measure of $\beta$ on $E$} is defined via $$B \mapsto  \frac{\beta(B \cap E)}{\beta(E)}.$$

\begin{theorem}[Low entropy Sinai factor]
\label{finite-fac}
Let $A$ be  a finite set.   Let $E \subset A$ have at least two elements and $\rho$ be a probability measure on $E$ with $H(\rho) >0$.  Let $\s>0$.       There exists a measurable map $\phi: A^{\N} \to \ns{0,1}^{\N}$ such that if $(p_n)_{n \in \N}$ is a sequence of probability measures on $A$ with the same conditional probability $\rho$ on  $E$,  with the  properties that
\begin{equation}
\label{inf-fE}
p_n(E) \geq \s >0  \text{ for all $n \in \N$},
\end{equation}
and that the associated Bernoulli shift  $(A^{\N}, \borel, \bigotimes_{n \in \N} p_n)$  is nonsingular, then $\phi$ is a measure-preserving factor from the nonsingular system to an i.i.d.\ system  taking two values $\ns{0,1}$.
\end{theorem}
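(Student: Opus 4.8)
The plan is to mimic the strategy behind Theorem~\ref{main-fac}, but in the discrete setting where the ``extra randomness'' needed to generate the i.i.d.\ target has to be extracted from the entropy $H(\rho)>0$ of the common conditional law rather than from a continuous conditional density. First I would split each coordinate into a two-stage description: write $a \in A$ as the pair $(\mathbf 1_E(a), a)$, so that under $\bigotimes_n p_n$ the coordinate $X_n$ decomposes as an \emph{indicator bit} $W_n = \mathbf 1_E(X_n)$, which is Bernoulli with success probability $p_n(E) \ge \s$, and, \emph{conditionally on $W_n = 1$}, a value in $E$ distributed according to the fixed law $\rho$ regardless of $n$. The point of hypothesis~\eqref{inf-fE} is exactly that $\sum_n p_n(E) = \infty$, so by Borel--Cantelli infinitely many coordinates land in $E$; and on those coordinates the conditional law is the \emph{stationary} law $\rho$. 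Thus, after passing to the (a.s.\ infinite) random subsequence $N_1 < N_2 < \cdots$ of indices with $X_{N_k} \in E$, the values $(X_{N_k})_k$ form a genuine i.i.d.\ $\rho$-sequence, and this is a measure-preserving (indeed deterministic) operation on the part of the system living inside $E$.

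The next step is the classical ingredient: apply Theorem~\ref{Sinai} (the Sinai factor theorem) in the static/combinatorial form. Since $H(\rho) > 0$, an i.i.d.\ $\rho$-sequence has positive entropy, so there is $\delta' > 0$ (for instance any $\delta' \le H(\rho)$, or the smaller of that and any prescribed bound coming from $\s$) and a measure-preserving factor from the i.i.d.\ $(E,\rho)$-shift onto the i.i.d.\ $\{0,1\}$-shift with bias $\tfrac12$, or more precisely onto an i.i.d.\ two-point system of entropy at most $H(\rho)$; Sinai's theorem furnishes an equivariant coding map $\psi$ realizing this. I would then \emph{interleave}: define $\phi(X)$ by reading off, coordinate by coordinate, the output of $\psi$ applied to the subsequence $(X_{N_k})_k$, but re-indexed back to $\N$ in a fixed equivariant way — the standard device is to have $\phi(X)_n$ depend on the block of $E$-coordinates seen ``up to time $n$'', so that the shift on $A^{\N}$ intertwines with the shift on $\{0,1\}^{\N}$ once one checks that the induced/first-return structure is respected. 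Because $\psi$ is measure preserving on i.i.d.\ $\rho$ input and the selection of the $E$-subsequence is measure preserving by the Borel--Cantelli/return-time argument above, the composite $\phi$ pushes $\bigotimes_n p_n$ forward to the i.i.d.\ two-point law, and crucially $\phi$ depends only on $E$, on $\rho$, and on $\s$ through the choice of target entropy — not on the individual $p_n$.

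The main obstacle, and the step I would spend the most care on, is \textbf{equivariance}: the shift $T$ on $A^{\N}$ deletes coordinate $0$, which may or may not be an $E$-coordinate, so the induced map on the $E$-subsequence is not simply the shift but the \emph{first-return map} (the induced transformation on $\{x : x_0 \in E\}$), and one must check that the coding $\phi$ constructed by feeding $\psi$ the return sequence actually satisfies $\phi \circ T = T' \circ \phi$ on the nose rather than merely up to a measure-preserving isomorphism. The clean way to arrange this is to define $\phi$ not by ``run $\psi$ then reindex'' but directly as a sliding-block-type code: let $\phi(x)_n$ be a function of $x_0, x_1, \dots, x_{m(n)}$ where $m(n)$ is the position of the $n$-th $E$-symbol (reading $n$-th appropriately, allowing $\phi(x)_n$ to be some fixed default value on the null-ish set where fewer than $n$ $E$-symbols have appeared — a set that is a.s.\ eventually empty by Borel--Cantelli), and verify the intertwining identity combinatorially. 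One also needs the nonsingularity hypothesis on $\bigotimes_n p_n$ only to make ``factor'' meaningful in the nonsingular category; the output is measure preserving because, conditioned on the $E$-locations, the input to $\psi$ is exactly stationary i.i.d.\ $\rho$, so no Radon--Nikodym cocycle survives — this is where the ``same conditional probability $\rho$ on $E$'' assumption does all the work, exactly paralleling the role of the common conditional density $g$ in Theorem~\ref{main-fac}.
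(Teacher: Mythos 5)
Your first half matches the paper: decompose each coordinate via the indicator of $E$, observe that the values at the ``special'' coordinates form an i.i.d.\ $\rho$-sequence, and feed that induced sequence into Sinai's theorem (Theorem \ref{Sinai}) to produce bits. But the second half --- how the bits get from the special coordinates to \emph{every} coordinate, equivariantly --- is where the real content of this theorem lies, and there your proposal has a genuine gap. In the finite-entropy setting a single $E$-symbol carries only $H(\rho)$ of entropy, so after coding it yields only a \emph{bounded} number of independent bits; unlike the continuous case of Theorem \ref{main-fac}, you cannot let every coordinate ``report to'' the nearest special coordinate (or to ``the block of $E$-coordinates seen up to time $n$''), because runs of non-special coordinates are a.s.\ unbounded when $p_n(E)$ can equal $\s<1$, so arbitrarily many sites would have to draw distinct independent bits from one symbol. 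Moreover, the naive reindexing you sketch ($\phi(x)_n$ a function of $x_0,\dots,x_{m(n)}$ with $m(n)$ the position of the $n$-th $E$-symbol) is not shift-equivariant: if $x_0\notin E$ the $E$-subsequence of $Tx$ equals that of $x$, so such a $\phi$ would satisfy $\phi(Tx)=\phi(x)$ rather than $\phi(Tx)=T'\phi(x)$. You flag equivariance as the delicate point but do not supply the device that resolves it.

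The paper's resolution has two quantitative ingredients you are missing. First, the entropy budget: one fixes $d\ge(1-\s)/\s$ and a Bernoulli law $\alpha$ on $\ns{0,1}$ with $(d+1)H(\alpha)\le H(\rho)$, so each special symbol is coded by Sinai's theorem into $d+1$ independent $\alpha$-bits --- it keeps one and can donate $d$. Your target ``entropy at most $H(\rho)$'' is too generous; the output entropy per coordinate must be of order $\s H(\rho)$, and this is precisely where the uniform bound \eqref{inf-fE} enters (its role is \emph{not} merely that $\sum_n p_n(E)=\infty$, which is why this hypothesis is stronger than the one in Theorem \ref{main-fac}). Second, the distribution mechanism: one needs a degree-$d$ \emph{equivariant matching scheme} pairing each non-special site with a special site so that no special site serves more than $d$ partners; this is Proposition \ref{mel}, proved by an adaptation of the Me{\v{s}}alkin/Holroyd--Peres matching, with the guarantee that every non-special site is matched coming from a random-walk recurrence argument (law of large numbers, or Chung--Ornstein in the critical case $d\s=1-\s$) that uses $p_n(E)\ge\s$. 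Without some such bounded-degree equivariant matching and the accompanying $(d+1)H(\alpha)\le H(\rho)$ bookkeeping, the interleaving step of your proposal cannot be carried out.
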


We continue our study of factors of nonsingular Bernoulli shifts in \cite{KSFac}.

\section{Explicit constructions}
\label{def}

It will be fairly straightforward to state the densities that we will use to prove Theorem \ref{main-exists}, and we will defer the proof of Theorem \ref{main-exists} to Section \ref{proof-exists}.

\subsection{The densities for $0<\lambda<1$}
\label{def-l}
We define the densities that will be used to prove Theorem \ref{main-exists}.  
For $n \geq 2$, set  
\begin{equation}
	\label{choice}
a_n:=\frac{1}{(n+4)\log (n+4)}.
\end{equation}
 Thus $a_n$ decreases to zero with
\begin{equation}
	\label{rate}
 \sum_{n=2}^\infty a_n=\infty \  \text{ and }  \  \sum_{n=3}^\infty \left(a_{n-1}-a_n\right)<\infty.
 \end{equation}
   Let $\les(A) = |A|$ denote the Lebesgue measure or length of an interval $A$.  Let $\lambda \in (0,1)$.   Then 
   \begin{equation}
   	\label{less-one}
   \lambda a_n+a_n<1,
   \end{equation}
 for all $n \geq 2$.  
    Let $\left\{A_n\right\}_{n=2}^\infty$
    and $\left\{B_n\right\}_{n=2}^\infty$ be decreasing sequences of open intervals of $[0,1]$ satisfying:
\begin{itemize}
	\item[(a)] For all $n\in\mathbb{N}$, $A_n\cap B_n=\emptyset$.
	\item[(b)] For all $n\in\mathbb{N}$, $A_{n+1}\subset A_n$ and $B_{n+1}\subset B_n$.
	\item[(c)] For all $n\in\mathbb{N}$, $|A_n|=a_n=\lambda^{-1}\left|B_n\right|$.
\end{itemize}  
Using these sequences we define a sequence of functions $f_n:[0,1]\to \left\{{\lambda}^{-1},1,\lambda \right\}$.      For all integers $n\leq 1$, set $f_n \equiv 1$.  For  $n\geq 2$, set
\begin{equation} \label{def of f_n}
	f_n(u):=\begin{cases}
		\lambda, & u\in A_n,\\
		\frac{1}{\lambda}, &u\in B_n, \\
		1, & u\in [0,1]\setminus \left(A_n\cup B_n\right).
	\end{cases}
\end{equation}	
 For all $n\geq 2$, we have 
$$\int_0^1 f_n(u)du=\left(1-\left|A_n\right|-\left|B_n\right|\right)+\lambda\left|A_n\right|+\lambda^{-1}\left|B_n\right|=1,$$
so that the $f_n$ are densities.

 In addition, with regards to Theorem \ref{main-fac}, the strict inequality in \eqref{less-one} assures us that we may choose the set $E:= [0,1]\setminus (A_2\cup B_2)$ 
, so that conditional densities of $f_n$ on  $E$ are given by the uniform distribution.  

We will refer to Lebesgue measure 
on
$[0,1]$ as the \dff{underlying probability measure}, when comparing the construction given here to the construction given later  in Section \ref{def-c}.

\subsubsection{Brief outline of the proof of Theorem \ref{main-exists}}

We will verify that the densities defined above witness Theorem \ref{main-exists}, so that the Bernoulli shift $(\Omega, \borel, \m, T)$ is of type-$\mathrm{III}_{\lambda}$, where $\Omega = [0,1]^{\Z}$ and $\m = \bigotimes_{n \in \Z} f_n$.    A straightforward application of  Kakutani's theorem on equivalence of product measures implies nonsingularity; see Section \ref{proof-exists}.   To show that it is of the appropriate Krieger type, we employ a synthesis of  methods which were recently developed for the study of Bernoulli shifts on two symbols.
We show that the shift is conservative by using ideas appearing in Vaes and Wahl \cite[Proposition 4.1]{VaesWahl} and Danilenko, Kosloff, and Roy \cite[Proposition 2.5]{DanKosRoy}.  This is enough to imply ergodicity in the setting of a product measure, since the shift is a $K$-automorphism.

	It is well-known that the discrete Maharam extension of $T$ is ergodic if and only if 
	$T$ is of  
	type-$\mathrm{III}_{\lambda}$; see Theorem \ref{duality}.  We will show the stronger property that the Maharam extension is a conservative $K$-automorphism; we argue that the tail sigma-algebra is trivial by showing that the larger exchangeable sigma-algebra is trivial.   In the course of our proof,  we will also obtain that    
the action of the group of all finite permutations of the integers  on $\left(\Omega,\mathcal{B},\m\right)$ is ergodic.

\subsection{Discrete Random Variables}
\label{dis-rv}

Prior to Theorem \ref{main-exists}, all  constructions of type-$\mathrm{III}$ Bernoulli shifts were of type-$\mathrm{III}_1$; we already mentioned in Question \ref{Qfinitetypes} that it is not known whether Bernoulli shifts on a finite number of symbols can exhibit all the different Krieger type.   
In fact,
 under weak conditions, the behaviour of such shifts is severely limited in the following sense.   Let $A$ be a finite set.  Let $\rho_n$ be probability measures on $A$.  We say that the product measure $\mu=\bigotimes_{n \in \Z} \rho_n$ satisfies the \dff{Doeblin} condition if there exists $\delta>0$ such that for all $n\in\mathbb{Z}$ and $a\in A$, we have $\rho_n(\{a\})>\delta$. 
In the case where $A=\{0,1\}$ it was shown in \cite[Theorem B]{BjoKosVaes} that when $\mu$ satisfies the Doeblin condition, then either $\left(\{0,1\}^\mathbb{Z},\mathcal{B},\mu,T\right)$ is dissipative (see Section \ref{conserve}) or it is ergodic and of type $\mathrm{II}_1$ or $\mathrm{III}_1$. Recently,  Avraham-Re'em \cite{Nachi} extended this dichotomy to the broader class of inhomogeneous Markov shifts supported on mixing subshifts of finite type; a particular case of this result is the following. 
 \begin{theorem}[Avraham-Re'em]  
 	\label{reem}
 	Let $A$ be a finite set and    $\left(A^\mathbb{Z},\mathcal{B},\mu,T\right)$ be a nonsingular Bernoulli shift.  If the product measure $\mu$ satisfies the Doeblin condition, then the system 
 	is   either  
 	 dissipative or it is ergodic and of type $\mathrm{II}_1$ or $\mathrm{III}_1$. 
 \end{theorem}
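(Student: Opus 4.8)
The plan is to obtain Theorem~\ref{reem} by running, in turn, the three classical dichotomies available for a product measure --- Kakutani's dichotomy for nonsingularity, the Hopf $0$--$1$ law for conservativity versus dissipativity, and the analysis of the Krieger ratio set of the Radon--Nikodym cocycle for the type --- and using the Doeblin condition at each stage to turn a soft qualitative statement into the sharp trichotomy of the conclusion.

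Nonsingularity is the hypothesis. For the conservative--dissipative dichotomy, by Hopf's criterion the conservative part of $T$ is, up to $\mu$-null sets, insensitive to altering finitely many coordinates --- here the Doeblin condition enters, since changing one coordinate alters each relevant Radon--Nikodym factor $\rho_{k+m}(x_k)/\rho_k(x_k)$ only by a bounded amount --- hence this set is $\mu$-trivial by the Kolmogorov $0$--$1$ law, and $T$ is either fully dissipative (the first alternative) or fully conservative. In the conservative case $T$ is a conservative nonsingular $K$-automorphism with respect to the filtration $\sigma(x_k:k\le n)$, and such an automorphism is ergodic --- this is precisely the implication invoked in the proof outline of Theorem~\ref{main-exists}. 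So it remains to show that a conservative ergodic nonsingular Bernoulli shift satisfying the Doeblin condition has type $\mathrm{II}_1$ or $\mathrm{III}_1$.

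For the type, by the Maharam criterion (Theorem~\ref{duality} and its refinement separating $\mathrm{III}_\lambda$ from $\mathrm{III}_1$) one computes the essential range --- the Krieger ratio set --- of the cocycle
\[
c_m(x)=\log\frac{d\,\mu\circ T^{-m}}{d\mu}(x)=\sum_{k\in\Z}\log\frac{\rho_{k+m}(x_k)}{\rho_k(x_k)},
\]
which for each $m$ is a sum of \emph{independent} coordinate functions, \emph{uniformly} bounded by $2\log(1/\delta)$ thanks to Doeblin. I would prove a clean dichotomy for the variance profile $\sigma_N^2:=\sum_{|k|\le N}\mathrm{Var}_\mu\!\big(\log\tfrac{\rho_{k+1}(x_k)}{\rho_k(x_k)}\big)$. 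If $\sup_N\sigma_N^2<\infty$: combining this with Kakutani's summability condition and the boundedness of the increments, one realises the cocycle as a coboundary with transfer function $\psi=\sum_k h_k(x_k)$ an a.s.-finite sum of coordinate functions for which $\int e^{\psi}\,d\mu=\prod_k\sum_a e^{h_k(a)}\rho_k(\{a\})<\infty$; then $e^{\psi}\mu$ is a finite equivalent invariant measure and $T$ is of type $\mathrm{II}_1$. If $\sigma_N^2\to\infty$: given a positive-measure set $A'$ and $\e>0$, I would condition on the coordinates outside a window $[-N,N]$, so that the remaining part of $c_m$ becomes a sum of $\asymp\sigma_N^2\to\infty$ independent, non-degenerate, uniformly bounded summands for which the Lindeberg condition holds automatically; a local central limit theorem then yields that for suitable $m=m(N)\to\infty$ every $t\in\R$ lies within $\e$ of $c_m$ on a positive-measure subset of $A'\cap T^{-m}A'$, so every real number is an essential value, the ratio set is $\R_{\ge0}$, and $T$ is of type $\mathrm{III}_1$. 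The point of Doeblin is that it forces a genuine two-way dichotomy with nothing in between, excluding the degenerate/lattice behaviour that $\mathrm{III}_0$ and $\mathrm{III}_\lambda$ would display.

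I expect the local-limit step to be the main obstacle: upgrading ``$\sigma_N^2\to\infty$'' to ``the ratio set is $\R_{\ge0}$'' requires quantitative control of the conditional law of the partial cocycle that is uniform over the conditioning, and simultaneously excluding $\mathrm{III}_0$ and $\mathrm{III}_\lambda$ --- equivalently, showing the essential range cannot be a proper closed subgroup of $\R$ --- is the same difficulty seen from the other side. (For Avraham-Re'em's full generality, inhomogeneous Markov shifts supported on mixing subshifts of finite type, one must in addition replace independence by an exponential-mixing / transfer-operator spectral-gap input; for the Bernoulli case of Theorem~\ref{reem} the independent-coordinate structure suffices.) A secondary, bookkeeping, obstacle is the $\mathrm{II}_1$-versus-$\mathrm{II}_\infty$ distinction in the bounded-variance case, where one must verify the integrability of $e^{\psi}$, again using the Doeblin bound together with the summability coming from Kakutani's criterion rather than any softer coboundary argument.
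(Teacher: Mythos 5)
You should note at the outset that the paper does not prove Theorem~\ref{reem}: it is quoted from Avraham-Re'em \cite{Nachi} (and, for two symbols, from \cite[Theorem B]{BjoKosVaes}), so there is no internal proof to measure your proposal against; I can only assess it on its own terms. Your first stage is sound in outline and is the standard route: under the Doeblin condition the divergence of $\sum_n (T^n)'(x)$ is unaffected by altering finitely many coordinates, so the Hopf decomposition is trivial and the shift is either dissipative or conservative, and in the conservative case ergodicity follows from the conservative $K$-automorphism argument (Lemma~\ref{ST-K}).

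The type analysis, however, rests on the wrong quantity and collapses. Under the Doeblin condition, for every $k$,
\[
\mathrm{Var}_\mu\Big(\log\frac{\rho_{k+1}(x_k)}{\rho_k(x_k)}\Big)\ \le\ \delta^{-2}\sum_{a\in A}\big(\rho_{k+1}(a)-\rho_k(a)\big)^2\ \le\ 4\,\delta^{-2}\sum_{a\in A}\big(\sqrt{\rho_{k+1}(a)}-\sqrt{\rho_k(a)}\big)^2,
\]
and the sum over $k\in\mathbb{Z}$ of the right-hand side is finite by Kakutani's criterion (the discrete analogue of \eqref{eq: Kakutani}), which is exactly the standing nonsingularity hypothesis. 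Hence $\sup_N\sigma_N^2<\infty$ holds automatically in the setting of the theorem: your second branch is empty, and your argument would conclude that every conservative nonsingular Doeblin Bernoulli shift is of type $\mathrm{II}_1$. This is false: Hamachi's example \cite{MR662470} and Kosloff's two-symbol shifts \cite{MR2851673} satisfy the Doeblin condition, are conservative, and are of type $\mathrm{III}_1$, as recalled in the discussion around Question~\ref{Qfinitetypes}. The correct dividing line is not square-summability of \emph{consecutive} differences but whether $\mu$ is equivalent to a homogeneous product measure, i.e.\ whether $\sum_{k}\|\rho_k-\rho\|^2<\infty$ for some fixed $\rho$; the variance of the cocycle of $T^m$ is comparable to $\sum_k\|\rho_{k+m}-\rho_k\|^2$, which can diverge as $m\to\infty$ even though the $m=1$ sums are finite, and it is this divergence, made uniform over the conditioning, that produces type $\mathrm{III}_1$. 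For the same reason your $\mathrm{II}_1$ step does not get started: without a limiting measure $\rho$ in the $\ell^2$ sense there is no candidate transfer function $\psi=\sum_k h_k(x_k)$ for a coboundary. Finally, even with the correct parameter, excluding $\mathrm{II}_\infty$, $\mathrm{III}_0$ and $\mathrm{III}_\lambda$ in the non-$\mathrm{II}_1$ branch is the substantive content of the theorem; your conditioned local-limit sketch points in the right direction but, as you yourself acknowledge, is not yet an argument.
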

 The question arises whether Theorem \ref{reem} holds when the product measure does not satisfy the Doeblin condition, as in the countable case.  When $A=\{0,1\}$ the following question arising from \cite{BjoKosVaes} is still open. 
 \begin{question}
 	\label{possible-nonD}
 	What are the possible Krieger types of the Bernoulli shift  
 	$\left(\{0,1\}^\mathbb{Z},\mathcal{B},\bigotimes_{n\in\mathbb{Z}}\rho_n,T\right)$ with  $\lim_{|n|\to\infty}\rho_n(0)=0$?
 \end{question} 

\subsubsection{The probability mass function for the countable case}
\label{def-c}
We will adapt the our construction in Section \ref{def-l} to the countable setting by  replacing the decreasing sequences of subsets of the unit interval with decreasing sequences of subsets of $\N$.   

Let $0<\lambda<1$.   For each integer $n \geq 1$, let  $$a_n=\frac{1}{(n+4)\log(n+4)}.$$ Let $\rho$ be the probability mass function on $\bb{N}$ defined by
$$
\rho(n)=\begin{cases}
1-(1+\lambda)a_1, &\ n=0,\\
a_k-a_{k+1}, &\ n=2k,\\
\lambda\left(a_k-a_{k+1}\right),&\ n=2k+1.
\end{cases}
$$
We will refer to $\rho$ as the \dff{underlying probability measure}.  For each integer $n \geq 1$, set  $$A_n=2\bb{N}\cap [2n,\infty) \text{ and  }  B_n=\left(\bb{N}\setminus 2\bb{N}\right)\cap [2n-1,\infty),$$ then
\[
\rho\left(A_n\right)=\sum_{k=n}^\infty \left(a_k-a_{k+1}\right)=a_n=\lambda^{-1}\rho\left(B_n\right). 
\]
For each integer $n \geq 1$,  let $f_n:\bb{N}\to \{\lambda^{-1},1,\lambda\}$ be defined via
\begin{equation} \label{def of f_n, countable}
f_n(k)=\begin{cases}
\lambda, & k\in A_n,\\
\frac{1}{\lambda}, &k\in B_n, \\
1, & k\in \bb{N}\setminus \left(A_n\cup B_n\right).
\end{cases}
\end{equation}	
We collect the following useful observation for future reference.
\begin{remark}
	\label{int-den-p}
Note that for all $n \geq 1$, we have
\[
\sum_{k=1}^\infty f_n(k)\rho(k)=\rho\left(\bb{N}\setminus \left(A_n\cup B_n\right)\right)+\lambda \rho\left(A_n\right)+\frac{1}{\lambda}\rho\left(B_n\right)=1.
\]
Thus  the functions $f_n$ are  probability density functions with respect the underlying probability measure $\rho$. \erk 
\end{remark}
Finally, define the product measure $p$ on $\bb{N}^\bb{Z}$ by
$$
p_n=\begin{cases}
\rho, &\ n\leq 0,\\
f_n \rho,& \ n\geq 1.
\end{cases}
$$

Let $\Omega = \N^{Z}$, and $\borel$ be the usual product sigma-algebra, and $\mu = \bigotimes_{n \in \Z} p_n$.  We will show that the Bernoulli shift $(\Omega, \borel, \mu, T)$ witnesses Theorem \ref{countable}.  After we prove Theorem \ref{main-exists}, we will see that the proof of  Theorem \ref{countable}, given in Section \ref{conclude}, will be a straightforward adaptation of the proof for the continuous random variables.

\section{The proofs of Theorems \ref{main-fac} and \ref{finite-fac}}

In our  proof of Theorem \ref{main-fac}, we will harness independent uniform random variables for every integer $n$ for which $x_n \in E \subset [0,1]$; these uniform random variables will then be distributed to the other integers.   Kalikow and Weiss \cite{MR1143427}  elegantly use similar ideas  to construct explicit 
isomorphisms 
of some infinite entropy processes.

\begin{proof}[Proof of Theorem \ref{main-fac}]
	Let the conditional density of the $f_n$ on $E$ be $g$. Thus if $G: \R \to [0,1]$ is the cumulative distribution function given by
	$$ G(v) = \int_{-\infty} ^v g(u) du$$
	we easily verify that if $V$ is a random variable with density $g$, then $G(V)$ is uniformly distributed in $[0,1]$.  We also note  that   by taking binary expansions, it is easy to see that there exists a measurable function $r : [0,1] \to [0,1]^{\N}$ such that if $U$ is uniformly distributed in $[0,1]$, then  $r(U)$ is 
	 an 
	 i.i.d.\ sequence of random variables that are uniformly distributed in $[0,1]$; for details see \cite[Lemma 3.21]{kal2}.
	
	Let $X = (X_n)_{n \in \N}$ 
	be a sequence of independent continuous random variables with corresponding densities $(f_n)_{n \in \N}$.   Call $s \in \N$ \dff{special} if $X_s \in E$; thus conditional on the event that $s$ is special, $G(X_s)$ is uniformly distributed in $[0,1]$; furthermore, conditional on the sequence of special integers $s_n$, the sequences of random variables $(r\circ G)(X_{s_n})_{n \in \N}$ are independent.  
	
	Thus the assumption on the densities $f_n$ make  it is easy to independently assign a uniform random variable to each special natural number in an equivariant way.    
Observe that by 
\eqref{inf-E} and the second Borel-Cantelli lemma there are infinitely many special natural numbers.    It remains to independently assign each non-special natural number a uniform random variable, in an equivariant way.  
	
	We say that each $k \in \N$ \dff{reports} to the smallest integer greater than or equal to $k$ that is special; thus if $s$ is special, then it reports to itself.  If $k \in \N$ reports to the special integer $s$, we set
	$$ [\phi(X)]_k =   [r(G(X_s))]_{s-k}.$$   
	It is easy to verify that $\phi$ satisfies the required properties.
	\end{proof}

Our proof of Theorem \ref{finite-fac} is slightly more involved than our proof of Theorem \ref{main-fac}, since in the finite entropy regime we cannot replicate uniform random variables.  At the special integers we only get i.i.d.\ discrete random variables; these random variables will be transformed using Theorem \ref{Sinai} into \dff{bits}, that is, zero and ones, that will then be distributed using the following equivariant matching scheme.

  Consider $d \in \Z^{+}$ and  a subset  $\Omega' \subset  \ns{a,b}^{\N}$  with $T(\Omega') \subset \Omega'$, where $T$ is the left-shift.     We want to define a loop-free graph $G(\omega)$  on $\N$ with the following properties.
\begin{itemize}
\item
If $m$ and $n$ are adjacent and $m <n $, then $\omega_m = b$ and $\omega_n =a$.
\item
Each vertex $m$ with $\omega_m=b$ is of degree $1$.
\item
Each vertex $n$ such that $\omega_n=a$ has degree at most $d$.
\item
If $n,m \geq 1$, then  the vertices $n$ and $m$  are adjacent in $G(\omega)$ if and only if  $n-1$ and $m-1$ are adjacent in  $G(T \omega)$.
\end{itemize}
 We call $G$ a \dff{degree-$d$ equivariant matching scheme}.  Thus $G$ is a matching of $a$'s and $b$'s, where every $b$ is matched to a unique $a$, and each $a$ has at most $d$ partners.

\begin{proposition}  
	\label{mel}
Consider the Bernoulli shift  $( \ns{a,b}^{\N}, \borel, \bigotimes_{n \in \N} p_n)$, where $p_n(a) \geq \s >0$ for all $n \in \N$.    Let $d \in \Z^{+}$ be such that $d \geq (1-\s)/\s$.   There exists a degree-$d$ equivariant matching scheme $G$ on a set of full measure.  
\end{proposition}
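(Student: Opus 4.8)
The plan is to construct $G$ by a deterministic last-in-first-out ``stack'' rule, read from left to right, which makes the four structural requirements transparent, and then to use the hypotheses $p_n(a)\geq\s$ and $d\geq(1-\s)/\s$ only to check that the rule matches every $b$ on a set of full measure.

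First I would describe the rule. Scan $\omega\in\{a,b\}^{\N}$ from position $0$ onward, maintaining a stack of indices, initially empty: when the coordinate read equals $b$, push that index; when the coordinate read equals $a$, say at position $n$, pop the top $\min(d,h)$ indices of the stack, where $h$ is its current height, and put an edge between $n$ and each popped index. The resulting graph $G(\omega)$ is a disjoint union of stars centred at $a$-vertices, so it is loop-free; every edge $\{m,n\}$ has $m<n$, $\omega_m=b$, $\omega_n=a$; every vertex $n$ with $\omega_n=a$ has degree at most $d$; and every $b$-vertex that is ever popped has degree exactly one. Hence the only point that remains is to show that every $b$-vertex is eventually popped.

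Next I would extract the explicit description that the LIFO property forces. As long as an index $m$ with $\omega_m=b$ has not been popped, every $a$-vertex strictly between $m$ and its eventual popper must have found at least $d$ indices lying above $m$ on the stack, and so removed exactly $d$ of them; therefore, writing $\#_a(m,p]$ and $\#_b(m,p]$ for the numbers of $a$'s, respectively $b$'s, among $\omega_{m+1},\dots,\omega_p$, the index $m$ is popped by
\[
\nu(m)=\min\bigl\{\,p>m:\ \omega_p=a\ \text{ and }\ \#_b(m,p]<d\cdot\#_a(m,p]\,\bigr\}
\]
whenever this set is non-empty. The crucial feature of this formula is that $\nu(m)$ depends only on $\omega_m,\omega_{m+1},\dots$, so that $\{m,n\}$ is an edge of $G(\omega)$ with $1\leq m<n$ exactly when $\{m-1,n-1\}$ is an edge of $G(T\omega)$; thus $G$ is a degree-$d$ equivariant matching scheme on the shift-invariant set $\Omega_\star=\{\omega:\nu(m)<\infty\ \text{for every}\ m\ \text{with}\ \omega_m=b\}$.

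The main obstacle, and the only place the numerical hypotheses are used, is to show $\Omega_\star$ has full measure. Fixing $m$ with $\omega_m=b$ and assuming $\nu(m)=\infty$, every $a$-position $p>m$ satisfies $\#_b(m,p]\geq d\cdot\#_a(m,p]$, and since the two counts together exhaust all positions this forces $\#_a(m,m+N]\leq N/(d+1)$ for every $N\geq 1$. Now $\#_a(m,m+N]$ is a sum of $N$ independent Bernoulli variables whose means are at least $\s\geq 1/(d+1)$, so its expectation is at least $N/(d+1)$, and the centred sum $M_N:=\#_a(m,m+N]-\E\,\#_a(m,m+N]$ would be $\leq 0$ for every $N$. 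But $(M_N)_N$ is a mean-zero sum of independent, uniformly bounded summands, for which $\{M_N\leq 0\ \forall N\}$ is a null event: when $\var(M_N)\to\infty$ this is the standard recurrence fact that $\limsup_N M_N=+\infty$ almost surely (via the Lindeberg central limit theorem and Kolmogorov's zero--one law), and in the remaining degenerate case $p_n(a)\to 1$, so $\E\,\#_a(m,m+N]-N/(d+1)\to\infty$ while $M_N$ stays bounded, again a contradiction. Hence $\P(\nu(m)=\infty)=0$ for each $m$, and a union bound over $m\in\N$ gives $\P(\Omega_\star)=1$, completing the proof.
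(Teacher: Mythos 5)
Your proof is correct, and while it follows the same overall template as the paper's (an explicit greedy, Me{\v{s}}alkin-style equivariant matching, plus a probabilistic argument that the drift condition $\s\geq 1/(d+1)$ forces every $b$ to be matched almost surely), both ingredients are carried out differently. For the construction, you use a LIFO stack rule and justify equivariance through the explicit first-passage formula for $\nu(m)$, which shows adjacency of $\{m,n\}$ depends only on $\omega_m,\dots,\omega_n$; the paper instead iterates the ``match $b$ immediately followed by $a$, erase, repeat'' rule. Both are legitimate, and your window formula makes the equivariance and the injectivity of the matching particularly transparent. For the full-measure statement, the paper encodes ``$m$ unmatched up to distance $k$'' as a first-passage event for a walk with increments in $\{-1,d\}$, couples it to the i.i.d.\ walk with parameter $\s$, and invokes the law of large numbers (positive drift) or Chung--Ornstein recurrence (zero drift). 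You avoid the coupling and the recurrence theorem entirely: from $\nu(m)=\infty$ you extract the deterministic inequality $\#_a(m,m+N]\leq N/(d+1)$ for all $N$, and rule it out by splitting on whether $\var(M_N)\to\infty$ (Lindeberg CLT plus the tail-event/Kolmogorov zero--one argument giving $\limsup_N M_N=+\infty$ a.s.) or $\var(M_N)$ stays bounded (then $p_n(a)\to 1$ and $M_N$ converges a.s.\ by the Khinchine--Kolmogorov convergence theorem, which you should cite for the boundedness claim, while the centering term diverges). Your route handles the inhomogeneity of the $p_n$ directly and is self-contained at the level of classical limit theorems; the paper's route is shorter once one is willing to quote stochastic domination and Chung--Ornstein, and it yields the slightly stronger quantitative statement $\P(R>k)\to 0$ uniformly in the inhomogeneous sequence. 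Only two small points deserve a sentence in a polished write-up: the deduction $p_n(a)\to 1$ from bounded variance uses $p_n(a)\geq\s$, and the a.s.\ boundedness of $M_N$ in that case needs the summable-variance convergence theorem (or, even more simply, Borel--Cantelli shows there are a.s.\ only finitely many $b$'s in that case, contradicting $\#_b(m,m+N]\geq dN/(d+1)$).
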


 Me{\v{s}}alkin \cite{MR0110782} gave the first example of a nontrivial isomorphism between two i.i.d.\ systems and almost a half a century afterwards, 
 his map was adapted by Holroyd and Peres \cite{MR2118858}  to define a perfect equivariant matching scheme for the case of i.i.d.\ fair coin-flips.     Our proof of Proposition \ref{mel} uses a similar adaptation of  Me{\v{s}}alkin  map.   For related matchings constructions in probability theory see also \cite{MR2264945,  random, Sa}.

\begin{proof}[Proof of Proposition \ref{mel}]
Let $\omega \in \ns{a,b}^{\N}$.  We define the graph $G(\omega)$ inductively in the following way.  For each $n \in \N$, if  $\omega_n = b$ and $\omega_{n+1} = a$, then add the edge $\ns{n, n+1}$ to the graph; that is, we match an $a$ and $b$ if $b$ is immediately followed by an $a$.  Now we disregard all the $b$'s that have been matched, and all the $a$'s that are already of degree $d$, and repeat inductively.   

Note that by definition if $n < m$, and $\omega_{n}=b = \omega_m$, then if $(n, k)$ is an edge with $k >m$, then $(m, \ell)$ is an edge for some $ m < \ell \leq k$.

To show that every $b$ is matched, let $(X_n)_{n \in \N}$ be independent $\ns{-1, d}$ valued random variables with  $\P(X_n =d) =p_n(a)$.  Identify $a$'s with $d$'s and $b$'s and with $-1$'s.  Fix $m \in \N$.    Suppose $X_m=-1$.   Let $$S_n := X_m + \cdots + X_{m+n}$$ and   let $$R:= \inf\ns{k \geq 1: S_{k} \geq 0}.$$   Then  
\begin{equation}
	\label{event}
	\P\ns{ \text{$(m,m+\ell)$ is not an edge for all $\ell \leq k$}} = \P({R >k}).
\end{equation}
It remains to  show that the right hand side of \eqref{event} decays to zero as $k \to \infty$.     Let $(X_n')_{n \in \N}$ be i.i.d.\ $\ns{-1, d}$-valued random variables with $\P(X_0'=d) =  \delta$.   Also assume that $X'_m=-1$ and     
 similarly define $S'_n$ and $R'$.  A standard coupling argument gives that
 $$ \P(R >k) \leq \P(R' >k).$$  
 Since $$\mathbb{E} X_0'   = d\s -(1-\s) \geq 0,$$
if $\E X_0' > 0$, then the law of large numbers gives that $R'$ is finite almost surely and if $\E X_0' =0$, then classical results of Chung and Ornstein \cite{MR40610,MR133148} regarding the recurrence random walks  imply that $R'$ is finite almost surely.     
	\end{proof}

\begin{proof}[Proof of Theorem \ref{finite-fac}]
Let $d \in \Z^{+}$ be so that $d \geq (1-\s)/\s$. Let $\alpha$ be a probability measure on $\ns{0,1}$ such that
\begin{equation*}
 (d+1)H(\alpha) \leq H(\rho).
 \end{equation*}
By Theorem \ref{Sinai}, it follows there exists a factor $\psi$ from the i.i.d.\ system with common distribution $\rho$ to the i.i.d.\ system with common probability measure $\alpha^{d+1}$ on $\ns{0,1}^{d+1}$.   

Let $X = (X_i)_{i \in \N}$ be a sequence of independent  random variables with corresponding probability mass functions $(p_i)_{i \in \N}$.       Call $s \in \N$ \dff{special} if $X_s \in E$; thus conditional on the event that $s$ is special, $(X_s)$ has p.m.f.\ $\rho$;  furthermore, conditional on the sequence of special integers $s_n$, the sequence of random variables $Y=(X_{s_n})_{n \in \N}$ are independent.  We apply the factor map $\psi$ on $Y$ to obtain at each special integer, $d$-independent bits with distribution $\alpha$.  

Similarly to the 
proof of Theorem \ref{main-fac},  
it remains  to distribute the bits from  the special integers, in an equivariant way, so that each integer has a bit; this can be  accomplished via Proposition \ref{mel}.  Each special integer retains one bit and allows the remaining $d$ bits to be distributed to the non-special integers according to the given equivariant matching; any remaining bits are discarded.
\end{proof}

\begin{remark}
At the outset, we invoked Sinai's factor theorem in our proof of Theorem \ref{finite-fac}.  If one required a more constructive proof, and an explicit factor map, we could instead appeal to del Junco's finitary version of Sinai's theorem \cite{Juncoa, Junco}. However, then we must assume that the set $E$ contains at least three symbols and require also that resulting  Bernoulli shift be on three symbols.  Keane and Smorodinsky's celebrated finitary isomorphisms \cite{keanea, keaneb} do not have a symbol restriction, but are not one-sided. \erk
	\end{remark}

 \section{Krieger's ratio set}
 \label{krs}

   In what follows, it will be convenient to think of an invertible nonsingular dynamical systems  $T= (T^n)_{n \in \Z}$ as the integer group action.     
Let $G$ be a group and write $\mu \sim \nu$ for two equivalent measures. A \dff{nonsingular group action} is a measure space $(\Omega, \F, \mu)$  endowed with a group action $T=(T_g)_{g \in G}$ such that $T_g \circ T_h = T_{gh}$ and $\mu \circ T_g \sim \mu$ for all $g,h \in G$; we say it is \dff{ergodic}  
if $E\in\B$ satisfies for all $g\in G$,  $\mu(E \triangle T_g(E)) =0$, then either 
$\mu(E)=0$ 
or $\mu(E^c) =0$, 
for all $E \in \F$.  
We say that $T$ is \dff{conservative} if for every $A  \in \F$ with positive measure, there 
exists  $g \in G$
that is not the 
unit of $G$, 
with $\mu(A \cap T^{-g}A) >0$, and otherwise we say that $T$ is \dff{dissipative}.  We will often identify $T_g$ with 
 $g$.

We say that two nonsingular group actions $(\Omega, \F, \mu, (T_g)_{g \in G})$ and $(\Omega', \mathcal{G}, \nu, (S_h)_{h \in H})$ are \dff{orbit equivalent} if there exists  a measurable bijection $\phi: \Omega \to \Omega'$ such that 
$\nu \sim \mu \circ \phi^{-1} $ and $\phi( \orb_G(x)) := \orb_H(\phi(x))$ for $\mu$-almost all $x \in \Omega$, where $\orb_G(x) = \ns{ T_g(x):  g \in G}$.   Motivated by problems in  von Neumann algebras,  Dye \cite{MR131516,MR158048} proved in the setting of a probability preserving system that any two Abelian discrete 
ergodic
group actions on non-atomic measure spaces are orbit equivalent.    

Following the work of Araki and Woods \cite{MR0244773}, who were again motivated by the Murray-von Neumann classification problem, Krieger \cite{kriegertypes,MR415341} extended Dye's celebrated result to the nonsingular setting.  For an ergodic nonsingular action  a number $r\in\mathbb{R}$ is an \dff{essential value} for $T$ if for all $A\in\F$ with $\mu(A)>0$ and $\epsilon>0$ there exists $g\in G$ such that 
 \[
 \mu\left(A\cap T_g^{-1}A\cap \bigg[\Big|\log\frac{d\mu\circ T_g}{d\mu}-r\Big|<\epsilon\bigg]\right)>0. 
 \]
 The \dff{Krieger ratio set} $e(T)$ is the collection of all essential values of $T$. The ratio set is a closed subgroup of $\mathbb{R}$ hence it is of
 \begin{itemize}
 \item
   type-$\mathrm{II}$ or type-$\mathrm{III}_0$:   $e(T) = \{0\}$;
  \item
 type $\mathrm{III}_\lambda$:   $e(T) = \{n\log\lambda :n\in\mathbb{Z} \}$ for some $0<\lambda<1$; or 
  \item
  type $\mathrm{III}_1$: $e(T) = \R$.
  \end{itemize}
  
The Krieger types are invariants for orbit equivalence and are a complete invariant  when $e(T)$ is  nonempty and $e(T) \not = \ns{0}$; this classification holds for any discrete amenable group action.   See  \cite{MR662736} for background and more details.

  We will use the following lemma to verify whether a given number is an essential value for $T$.  
 The \dff{orbital equivalence relation} of the action $T$ is the Borel subset $\mathcal{O}_T\subset X\times X$ defined by 
 \[
 (x,y)\in \mathcal{O}_T\ \  \text{if and only if there exists } g\in G \text{ such that }  T_gx=y.  
 \]
The \dff{full group} $[T]$ consists of all nonsingular automorphisms $V$ of $\left(\Omega,\F,\mu\right)$ such that for almost all $x \in \Omega$, we have $(x,Vx)\in\mathcal{O}_T$.    Let $A \in \F$.   We say that an injective nonsingular map $V:A\to V(A)\in\mathcal{F}$ such that $(x,Vx) \in \mathcal{O}_T$ for all $x \in A$ is a \dff{partial transformation} with domain $A$ and range $V(A)$.   The collection of partial transformations will be denoted by $[[T]]$.

 Recall that a collection of subsets of $\F$ is \dff{$\mu$-dense} if  for every $\e >0$ and every $F \in \F$ there exists a $F'$ from the collection such that $\mu(F' \triangle F) < \e$.  

 \begin{lemma}[Approximation]
 	\label{lem: CHP}
 Let $(\Omega, \F, \mu, T)$ be a nonsingular,  ergodic action of a 
 countable group. 	Let $\mathcal{G}\subset\mathcal{F}$ a countable semi-ring such that the ring generated by $\mathcal{G}$ is $\mu$-dense in $\mathcal{F}$.  If there exists $0<\delta<1$ such that for each $A\in \mathcal{G}$ and $\epsilon>0$ there 
 are:
 	\begin{itemize}
 		\item a subset $B\subset A$ with $\mu(V(B))>\delta\mu(A)$ and
 		\item a partial transformation $V:B\to A$ such that $(x,Vx)\in\mathcal{O}_T$ and for all $x\in B$, $\left|\frac{d\mu\circ V}{d\mu}(x)-r\right|<\epsilon$,
 	\end{itemize}
 	then $r\in e(T)$. 	  
 \end{lemma}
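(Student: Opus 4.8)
The plan is to show that the hypothesis, which gives good partial transformations only on the sets $A$ of the semi-ring $\mathcal G$, can be bootstrapped to an analogous statement for an arbitrary set $F \in \F$ of positive measure, which is exactly what is needed to conclude $r \in e(T)$ from the definition of an essential value. So fix $F \in \F$ with $\mu(F) > 0$ and $\e > 0$; I must produce $g \in G$ with $\mu\big(F \cap T_g^{-1} F \cap [\,|\log \tfrac{d\mu \circ T_g}{d\mu} - r| < \e\,]\big) > 0$. First I would use $\mu$-density of the ring generated by $\mathcal G$ to approximate $F$ by a finite disjoint union of sets from $\mathcal G$ (a semi-ring makes differences of its members finite disjoint unions of members, so the ring elements are finite disjoint unions of $\mathcal G$-sets); by a pigeonhole/averaging argument one of these pieces, call it $A \in \mathcal G$, satisfies $\mu(A \cap F)$ close to $\mu(A)$, say $\mu(A \setminus F) < \eta\, \mu(A)$ for a small $\eta$ to be chosen in terms of $\delta$.

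Next I would apply the hypothesis to this $A$ with a suitably small error parameter $\e' \le \e$: this yields $B \subset A$ with $\mu(V(B)) > \delta \mu(A)$ and a partial transformation $V : B \to A$ with $(x, Vx) \in \mathcal O_T$ and $|\tfrac{d\mu\circ V}{d\mu}(x) - r| < \e'$ on $B$. The key point is to pass from $V$ on $B$ to the same relation intersected with $F$ on both sides. Since $V$ is nonsingular, $\mu(V(B \setminus F))$ is controlled — but more directly, $\mu(B \setminus F) \le \mu(A \setminus F) < \eta\,\mu(A)$, and applying $V^{-1}$ (nonsingular) to $A \setminus F$ controls $\mu\big(B \cap V^{-1}(A \setminus F)\big)$; here I would use absolute continuity of $\mu \circ V$ and $\mu \circ V^{-1}$ with respect to $\mu$ to turn the smallness of $\mu(A\setminus F)$ into smallness of these two sets once $\eta$ is small. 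Concretely: the derivative bound $|\tfrac{d\mu\circ V}{d\mu} - r| < \e'$ means $\tfrac{d\mu\circ V}{d\mu}$ is bounded above and below by constants depending only on $r, \e$, so $\mu(V(S)) \asymp \mu(S)$ for $S \subset B$ up to these constants; hence choosing $\eta$ small (relative to $\delta$ and to $e^{-|r|-\e}$, $e^{|r|+\e}$) forces
\[
\mu\big(B \cap F \cap V^{-1}(A \cap F)\big) \;>\; \big(\delta' \big)\,\mu(A) \;>\; 0
\]
for some $\delta' > 0$. Call this set $B'$; on $B'$ we have $x \in F$, $Vx \in F$, $(x,Vx)\in\mathcal O_T$, and the derivative estimate holds.

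Finally I would convert the partial transformation $V$ on $B'$ into a single group element $g$. Since $G$ is countable and $(x,Vx) \in \mathcal O_T$ for every $x \in B'$, the set $B'$ decomposes (up to null sets) as a countable disjoint union $B' = \bigsqcup_{g \in G} B'_g$ where $B'_g = \{x \in B' : T_g x = Vx\}$; by countable additivity some $B'_g$ has positive measure. On $B'_g$ we have $T_g x = V x$, so $\tfrac{d\mu\circ T_g}{d\mu}(x) = \tfrac{d\mu\circ V}{d\mu}(x)$ for a.e.\ $x \in B'_g$ (the Radon–Nikodym cocycle depends only on the orbit map), whence $|\log \tfrac{d\mu\circ T_g}{d\mu}(x) - r| < \e$ there provided I picked $\e'$ small enough that the bound on $|\tfrac{d\mu\circ V}{d\mu} - r|$ forces the bound on $|\log \tfrac{d\mu\circ V}{d\mu} - r|$ (a routine adjustment since near $r$ the logarithm is bi-Lipschitz). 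Also $x \in F$ and $T_g x = Vx \in F$, so $B'_g \subset F \cap T_g^{-1} F \cap [\,|\log \tfrac{d\mu\circ T_g}{d\mu} - r| < \e\,]$, and this set has positive measure. Since $F$ and $\e$ were arbitrary, $r \in e(T)$.

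I expect the main obstacle to be the bookkeeping in the second paragraph: choosing the approximation tolerance $\eta$ and the derivative tolerance $\e'$ in the right order so that, after intersecting the graph of $V$ with $F \times F$, a definite proportion of the mass survives — this is where one genuinely uses that the Radon–Nikodym derivative of $V$ is pinned near $r$ (hence bounded away from $0$ and $\infty$), so that $V$ and $V^{-1}$ distort measure by at most a fixed factor and the "bad" parts $A \setminus F$ pulled through $V^{\pm 1}$ stay small. Everything else — the semi-ring/ring approximation, the pigeonhole, and the countable decomposition of the orbit relation into group elements — is standard.
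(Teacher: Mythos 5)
Your argument is correct and is essentially the standard proof of this approximation lemma: the paper itself gives no argument, simply citing the second half of \cite[Lemma 2.1]{ChHaPr87} (extended to allow $V\in[[T]]$ rather than $V\in[T]$), and your three steps---approximating $F$ by a semi-ring element via the pigeonhole, using that the derivative of $V$ is pinned near $r$ (hence bounded away from $0$ and $\infty$) to keep a definite proportion of $B$ inside $F\cap V^{-1}(F)$, and then decomposing that set over the countably many group elements realizing $V$ on the orbit relation---are exactly what that routine extension consists of. The only point to tidy is an ambiguity inherited from the paper's own statement between the derivative scale and the logarithmic scale (the hypothesis bounds $\frac{d\mu\circ V}{d\mu}-r$ while the definition of $e(T)$ bounds $\log\frac{d\mu\circ T_g}{d\mu}-r$); as you observe, this is harmless because the derivative is confined near a fixed positive value, where the logarithm is bi-Lipschitz.
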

 \begin{proof}
 The proof is a routine extension of the second half of \cite[Lemma 2.1]{ChHaPr87}, where we allow $V \in [[T]]$, rather than requiring that $V \in [T]$.    See also \cite[Lemma 2.1]{DaniLem}.
 	\end{proof}

 \section{Proof of Theorem \ref{main-exists}}
 \label{proof-exists}
 
 \subsection{Nonsingularity and Kakutani's theorem}

 Let $(\Omega, \F, \mu, T)$ 
 be an invertible   nonsingular system. 
      We write $$T':=\frac{d\mu\circ T}{d\mu}.$$
  Let $\Omega=[0,1]^\mathbb{Z}$.  Given a sequence of densities $f_n:[0,1]\to (0,\infty)$, let $\mu = \bigotimes_{n \in \Z} f_n$ be the associated product measure.   Since $\m\circ T=\bigotimes_{n\in\mathbb{Z}}f_{n-1}$ is also a product measure, it follows from Kakutani's theorem \cite{MR23331} on equivalence of product measures that $\m$ is $T$ nonsingular if and only if
  \begin{equation}\label{eq: Kakutani}
  	\sum_{n\in\mathbb{Z}} \int_0^1 \left(\sqrt{f_n(u)}-\sqrt{f_{n-1}(u)}\right)^2du<\infty. 
  \end{equation}
 
 With the densities $f_n$ defined in Section \ref{def}, by Kakutani's theorem, for $\m$-almost every $x\in \Omega$ and  for all $n\in\mathbb{Z}$, we have
 \begin{equation}
 \label{lambda}
  \left(T^n\right)'(x):=\frac{d\m\circ T^n}{d\m}(x)=\prod_{k\in\mathbb{Z}}\frac{f_{k-n}\left(x_k\right)}{f_k\left(x_k\right)}\in\left\{\lambda^n: n\in\Z\right\}. 
  \end{equation}
  In particular, we have $e(T)\subset\left\{\lambda^n:\ n\in\mathbb{Z}\right\}$ and thus to show that $T$ is type $\mathrm{III}_\lambda$ it is enough to show that $T$ is ergodic and that $\lambda\in e(T)$.

\begin{lemma}
	\label{nonsing}
	 With the densities $f_n$ defined in Section \ref{def},  the associated  Bernoulli shift is nonsingular. 
		\end{lemma}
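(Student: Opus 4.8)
The plan is to apply the Kakutani criterion \eqref{eq: Kakutani}: nonsingularity of the Bernoulli shift is equivalent to
\[
\sum_{n\in\Z}\int_0^1\left(\sqrt{f_n(u)}-\sqrt{f_{n-1}(u)}\right)^2du<\infty,
\]
and I will verify this sum is finite directly from the explicit description \eqref{def of f_n} of the densities. Since $f_n\equiv 1$ for all $n\le 1$, every summand with $n\le 1$ vanishes, and for $n=2$ only $f_2$ is nontrivial while $f_1\equiv 1$; so it suffices to control $\sum_{n\ge 2}\int_0^1(\sqrt{f_n}-\sqrt{f_{n-1}})^2$.

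For $n\ge 3$ I would partition $[0,1]$ according to the nested intervals $A_{n-1}\supseteq A_n$ and $B_{n-1}\supseteq B_n$, using the disjointness $A_{n-1}\cap B_{n-1}=\emptyset$: on $A_n$ and on $B_n$ the densities $f_n$ and $f_{n-1}$ coincide; on $A_{n-1}\setminus A_n$ one has $f_n=1$ and $f_{n-1}=\lambda$; on $B_{n-1}\setminus B_n$ one has $f_n=1$ and $f_{n-1}=\lambda^{-1}$; and off $A_{n-1}\cup B_{n-1}$ both equal $1$. Since $|A_{n-1}\setminus A_n|=a_{n-1}-a_n$ and $|B_{n-1}\setminus B_n|=\lambda(a_{n-1}-a_n)$, the $n$-th summand equals $\big[(1-\sqrt\lambda)^2+\lambda(1-\lambda^{-1/2})^2\big](a_{n-1}-a_n)$. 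A one-line simplification shows $\lambda(1-\lambda^{-1/2})^2=(1-\sqrt\lambda)^2$, so the summand is $2(1-\sqrt\lambda)^2(a_{n-1}-a_n)$, and $\sum_{n\ge 3}2(1-\sqrt\lambda)^2(a_{n-1}-a_n)<\infty$ by \eqref{rate}. The single remaining term $n=2$ (where the integrand is supported on $A_2\cup B_2$) equals $(1-\sqrt\lambda)^2 a_2+(1-\lambda^{-1/2})^2\lambda a_2$, a finite constant depending only on $\lambda$. Adding these establishes \eqref{eq: Kakutani}, and Kakutani's theorem then gives nonsingularity.

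There is no genuine obstacle here; the argument is essentially a bookkeeping exercise. The two points requiring a little care are the boundary index $n=2$, where $A_1$ and $B_1$ are not defined and the term must be handled separately, and correctly identifying the (at most) five regions into which $A_{n-1},B_{n-1},A_n,B_n$ cut the unit interval, so that $\sqrt{f_n}-\sqrt{f_{n-1}}$ is recognized as supported only on $(A_{n-1}\setminus A_n)\cup(B_{n-1}\setminus B_n)$. The decisive analytic input is the second half of \eqref{rate}, i.e.\ that the sequence $(a_n)$ has summable increments, which is precisely why $a_n=1/((n+4)\log(n+4))$ was chosen.
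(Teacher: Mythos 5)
Your proof is correct and follows essentially the same route as the paper: both verify Kakutani's criterion \eqref{eq: Kakutani} by noting that $\sqrt{f_n}-\sqrt{f_{n-1}}$ is supported on $(A_{n-1}\setminus A_n)\cup(B_{n-1}\setminus B_n)$ (with the $n=2$ term treated separately), and then conclude summability from \eqref{rate}. Your evaluation of the $n$-th summand as $2(1-\sqrt{\lambda})^2(a_{n-1}-a_n)$ is in fact exact, where the paper settles for a cruder constant bound, but this is only a cosmetic difference.
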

\begin{proof}
  
Setting $A_1=B_1=\emptyset$, we see that
 \begin{align*}
 	\sum_{n\in\mathbb{Z}} \int_0^1 \Big(\sqrt{f_n(u)}-\sqrt{f_{n-1}(u)}\Big)^2du&=\sum_{n=2}^\infty \int_0^1 \Big(\sqrt{f_n(u)}-\sqrt{f_{n-1}(u)}\Big)^2du\\
 	&=\sum_{n=2}^\infty \big(\lambda \left|A_{n-1}\setminus A_n\right|+\frac{1}{\lambda}\left|B_{n-1}\setminus B_n\right|\big)\\
 	&\leq\frac{\lambda}{6\log 6}+\sum_{n=3}^\infty 2\lambda\left(a_{n-1}-a_n\right).
 \end{align*}
 The finiteness of the right-hand side follows from \eqref{rate} and thus Kakutani theorem implies the desired nonsingularity.   
 \end{proof}
 
 \subsection{Conservativity and ergodicity}
 \label{conserve}
 
 Let $(\Omega, \F, \mu)$ be a measure space.   Let $T: \Omega \to \Omega$ be a nonsingular transformation.  
It is well-known that if $T$ is ergodic and $\mu$ is non-atomic, then $T$ is conservative, whereas the converse fails.   However, there is a partial converse, in the case that $T$ is an $K$-automorphism in the sense of the Kolmogorov zero-one law.  Suppose that $T$ is invertible.     Following Silva and Thieullen \cite[Definition 4.5]{SilThe95}, we say that $T$ is a \dff{$K$-automorphism}  if there exists
a sigma-algebra 
 $\mathcal{G}\subset \F$ such that:
  \begin{itemize}
  	\item $\mu|_\mathcal{G}$ is $\sigma$-finite and; $T^{-1}\mathcal{G}\subset \mathcal{G}$; 
  	\item $\bigcap_{n\in\mathbb{N}}T^{-n}\mathcal{G}=\{\emptyset,X\} \bmod \mu$;
  	\item  $\bigvee_{n\in\mathbb{N}}T^n \mathcal{G}=\mathcal{F} \bmod \mu$; 
  	\item $T'$ is $\mathcal{F}$ measurable.
  \end{itemize}
  The first three conditions are that $T$ is a natural extension of an endomorphism with a trivial tail field
  while the fourth comes to ensure that the natural extension is unique up to measure theoretic isomorphism of nonsingular systems. 
  
  We will make use of the following proposition from Silva and Thieullen \cite[Proposition 4.8]{SilThe95}; see also Parry \cite{ParryKauto65}.
  
  \begin{lemma}[Silva and Thieullen]
  	\label{ST-K}
  A $K$-automorphism is ergodic if and only if it is conservative.
  \end{lemma}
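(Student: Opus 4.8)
The plan is to prove both directions of the equivalence. Since ergodicity of a non-atomic system always implies conservativity, only the reverse implication requires work: assuming $T$ is a conservative $K$-automorphism, I would deduce ergodicity. The natural framework is the Hopf decomposition together with the zero-one law built into the $K$-property. First I would use the witnessing sigma-algebra $\mathcal{G}$ with $T^{-1}\mathcal{G} \subset \mathcal{G}$, trivial remote past $\bigcap_n T^{-n}\mathcal{G} = \{\emptyset, \Omega\}$, and $\bigvee_n T^n \mathcal{G} = \mathcal{F}$. The key point is that $T$ is the natural extension of a nonsingular endomorphism $(\Omega_0, \mathcal{G}_0, \mu_0, S)$ whose tail field $\bigcap_n S^{-n}\mathcal{G}_0$ is trivial.

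The main step is a Kolmogorov-type argument: suppose $E \in \mathcal{F}$ is $T$-invariant, $\mu(E \triangle T^{-1}E) = 0$. Since $\bigvee_n T^n \mathcal{G} = \mathcal{F} \bmod \mu$, I would approximate $E$ (in the sense of symmetric difference on sets of finite measure, using that $\mu|_{\mathcal{G}}$ is $\sigma$-finite) by sets $E_n$ measurable with respect to $T^n \mathcal{G}$; equivalently $T^{-n}E$ is well-approximated by sets in $\mathcal{G}$, and by invariance $E$ itself is, for every $n$, approximable by $\mathcal{G}$-sets pushed back by $T^{-n}$. The conservativity of $T$ enters precisely here: it guarantees there is no ``wandering'' obstruction, so that the invariant set $E$ (after restricting to a set of finite measure and renormalizing) can be treated as if it lived in a probability space, and one can run the martingale convergence / reverse-martingale argument showing $E$ is measurable with respect to the remote past $\bigcap_n T^{-n}\mathcal{G}$, which is trivial. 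Hence $\mu(E) = 0$ or $\mu(E^c) = 0$. For the interplay with $\sigma$-finiteness I would cite Parry \cite{ParryKauto65} and follow Silva and Thieullen \cite[Proposition 4.8]{SilThe95} for the precise handling of the Radon–Nikodym cocycle, since the fourth condition ($T'$ being $\mathcal{F}$-measurable) is exactly what makes the natural extension unique and keeps the cocycle compatible with the filtration.

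The main obstacle I anticipate is the $\sigma$-finite (rather than finite) setting: the reverse-martingale argument for triviality of the tail is classically stated for probability spaces, so one must localize to sets of finite $\mu$-measure and argue that conservativity propagates the approximation globally. Concretely, the delicate part is showing that a $T$-invariant set cannot be ``spread thin'' across the remote past in a way that survives only because the total mass is infinite — conservativity rules this out because every positive-measure set returns, forcing the invariant set to be genuinely determined by the remote past on each finite piece and hence globally. Since the statement is quoted verbatim from \cite[Proposition 4.8]{SilThe95}, in the paper itself I would simply cite that reference (and Parry \cite{ParryKauto65}) rather than reproduce the argument.
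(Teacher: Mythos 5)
Your proposal ends exactly where the paper does: the lemma is quoted from Silva and Thieullen \cite[Proposition 4.8]{SilThe95} (see also Parry \cite{ParryKauto65}), and the paper likewise offers no independent proof, only the citation. Your preliminary sketch of the tail-triviality argument is a reasonable outline but is not needed, so the approach matches the paper's.
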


 \begin{lemma}
 	\label{lem: VW}
 	Let $\lambda \in (0,1)$.  Let $f_n$ be the densities defined in Section \ref{def}, let $\m = \otimes_{n \in \Z} f_n$ and consider the associated Bernoulli shift.   
 	Then with 
 	$$c(\lambda) = 2(\lambda^3-1+\lambda^{-2}-\lambda),$$  for all $n\in\mathbb{N}$, we have	
 	\[
 	\int_{\Omega } \left(\frac{1}{\left(T^n\right)'}\right)^2d\mu\leq 
 	\exp\left(c(\lambda)\sum_{k=2}^{n+1}a_k\right). 
 	\]
 \end{lemma}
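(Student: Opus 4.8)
The goal is to bound the second moment $\int_\Omega \big((T^n)'\big)^{-2}\,d\mu$, and since $(T^n)' = d(\mu\circ T^n)/d\mu = \prod_{k}\frac{f_{k-n}(x_k)}{f_k(x_k)}$, its reciprocal factorizes over coordinates as an independent product. So I would write
\[
\int_\Omega \Big(\frac{1}{(T^n)'}\Big)^2 d\mu = \prod_{k\in\mathbb{Z}} \int_0^1 \Big(\frac{f_k(u)}{f_{k-n}(u)}\Big)^2 f_k(u)\,du,
\]
using that under $\mu$ the coordinates $x_k$ are independent with law $f_k$. Each factor is $\int_0^1 \frac{f_k(u)^3}{f_{k-n}(u)^2}\,du$. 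The point is that $f_k$ and $f_{k-n}$ differ only on the symmetric-difference regions $A_{k-n}\triangle A_k$ and $B_{k-n}\triangle B_k$ (with the convention $A_1=B_1=\emptyset$, $A_m=B_m=[0,1]$-irrelevant pieces handled as in Lemma \ref{nonsing}), since on the bulk $[0,1]\setminus(A_{k-n}\cup A_k\cup B_{k-n}\cup B_k)$ both densities equal $1$ and the integrand is $1$, contributing nothing beyond the Lebesgue mass of that set.

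**Key computation.** For a fixed $k$ and shift $n\geq 1$, I would compute $\int_0^1 \frac{f_k^3}{f_{k-n}^2}\,du - 1$ explicitly by splitting $[0,1]$ into the regions where $(f_k,f_{k-n})$ takes each of its finitely many value-pairs. Because the intervals are nested ($A_{k}\subset A_{k-n}$ when $k>k-n$, i.e. for $n\geq1$, and likewise for $B$), the only regions where the two densities disagree are $A_{k-n}\setminus A_k$ (there $f_k=1$, $f_{k-n}=\lambda$, integrand $=\lambda^{-2}$), and $B_{k-n}\setminus B_k$ (there $f_k=1$, $f_{k-n}=\lambda^{-1}$, integrand $=\lambda^{2}$); on $A_k$ both are $\lambda$ and the integrand is $\lambda^{3}/\lambda^{2}=\lambda$; on $B_k$ both are $\lambda^{-1}$ and the integrand is $\lambda^{-3}/\lambda^{-2}=\lambda^{-1}$. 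Collecting terms and using $|A_k|=a_k=\lambda^{-1}|B_k|$ gives, for each $k$,
\[
\int_0^1 \frac{f_k^3}{f_{k-n}^2}\,du - 1 = (\lambda-1)|A_k| + (\lambda^{-1}-1)|B_k| + (\lambda^{-2}-1)\big(|A_{k-n}|-|A_k|\big) + (\lambda^{2}-1)\big(|B_{k-n}|-|B_k|\big),
\]
and substituting $|B_m|=\lambda|A_m|=\lambda a_m$ this collapses to an expression of the form $(\text{const})\cdot a_{k-n} + (\text{const})\cdot a_k$; pairing it so that the surviving sum telescopes over $k$ and keeping only positive contributions, one is left with something dominated by $\tfrac12 c(\lambda)(a_{k-n}+a_k)$ for an appropriate constant, where $c(\lambda)=2(\lambda^3-1+\lambda^{-2}-\lambda)$ is exactly what appears after this bookkeeping (the $\lambda^3$ and $\lambda^{-1}$ coming from the bulk of $A_k$, $B_k$, the $\lambda^{-2}$ and $\lambda^2$ from the annuli). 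Then $\log$ of the product is $\sum_k \log\!\big(1 + (\cdots)\big) \leq \sum_k (\cdots)$, and since $A_{k-n}\setminus A_k$ is nonempty only for $2\leq k \leq n+1$ (for $k\leq 1$ both sets are empty, for $k\geq n+2$ one has $A_{k-n}=A_k$ modulo nothing), the double sum reduces to $c(\lambda)\sum_{k=2}^{n+1} a_k$, giving the claimed $\exp\big(c(\lambda)\sum_{k=2}^{n+1}a_k\big)$.

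**Main obstacle.** The arithmetic of collecting the four regional contributions into precisely the constant $c(\lambda)=2(\lambda^3-1+\lambda^{-2}-\lambda)$ and the correct index range $k=2,\dots,n+1$ is the only real work; in particular one must be careful about the telescoping/pairing so that the negative terms $(\lambda-1)|A_k|$ etc. are simply dropped (they only help the inequality) while the positive annulus terms are bounded using $|A_{k-n}| = a_{k-n} \le a_2$ and the monotonicity $a_m\downarrow$, and one must make sure the $n<0$ coordinates (where $f_k\equiv 1$) and the shifted copies contribute nothing. Using $\log(1+x)\le x$ is the clean way to turn the product into the exponential bound; everything else is elementary. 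I do not expect any genuine difficulty beyond careful bookkeeping, so the proof should be short.
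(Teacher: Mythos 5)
Your skeleton --- factorize the integral over coordinates, compute each factor by splitting $[0,1]$ into regions where the pair $(f_k,f_{k-n})$ is constant, apply $1+x\le e^x$, and telescope --- is the paper's route, and your displayed identity for $\int_0^1 f_k^3/f_{k-n}^2\,du-1$ is correct in the regime $k\ge n+2$, where it collapses (via $|B_m|=\lambda|A_m|$) to exactly $\tfrac{c(\lambda)}{2}(a_{k-n}-a_k)$. But the execution has genuine errors. First, your region analysis tacitly assumes $f_{k-n}$ is one of the nontrivial densities. For $2\le k\le n+1$ we have $k-n\le 1$, so $f_{k-n}\equiv 1$ and the nesting $A_k\subset A_{k-n}$ fails ($A_{k-n}=\emptyset$): on $A_k$ the integrand is $\lambda^3$, not $\lambda$, and on $B_k$ it is $\lambda^{-3}$, not $\lambda^{-1}$. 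The correct factor there is $1+\tfrac{c(\lambda)}{2}a_k$, whereas your formula, read with $|A_{k-n}|=|B_{k-n}|=0$, outputs $1-\tfrac{c(\lambda)}{2}a_k$ --- wrong sign. Second, your closing bookkeeping is exactly backwards: $A_{k-n}\setminus A_k$ is \emph{empty} for $2\le k\le n+1$ and has measure $a_{k-n}-a_k>0$ for \emph{every} $k\ge n+2$ (it is false that $A_{k-n}=A_k$ there, since $a_{k-n}>a_k$). The correct accounting is: the range $2\le k\le n+1$ contributes $\tfrac{c(\lambda)}{2}\sum_{k=2}^{n+1}a_k$ through the $\int f_k^3$ factors, and the range $k\ge n+2$ contributes $\tfrac{c(\lambda)}{2}\sum_{k\ge n+2}(a_{k-n}-a_k)=\tfrac{c(\lambda)}{2}\sum_{k=2}^{n+1}a_k$ by telescoping; the two halves together give the constant $c(\lambda)$.

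The shortcuts in your last paragraph would also break the argument. You cannot ``keep only positive contributions'': the exact cancellation $(\lambda-1)|A_k|+(\lambda^{-1}-1)|B_k|=0$ is indispensable, since dropping the negative half leaves $(1-\lambda)a_k$ per factor and $\sum_k a_k=\infty$ by \eqref{rate}. Likewise you must keep the differences $a_{k-n}-a_k$ intact so that they telescope; bounding $|A_{k-n}|\le a_2$ termwise and summing over $k\ge n+2$ again diverges. Finally, discarding the negative annulus term $(\lambda^2-1)\left(|B_{k-n}|-|B_k|\right)$ instead of pairing it with the $A$-annulus term would replace $\tfrac{c(\lambda)}{2}$ by the strictly larger $\lambda^{-2}-1$, so the specific constant $c(\lambda)$ in the statement would not be recovered. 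The repair is simply to split into the two ranges $2\le k\le n+1$ and $k\ge n+2$, compute each factor exactly (no terms dropped), and only then apply $1+x\le e^x$, which is precisely what the paper does.
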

 \begin{proof}
 	Let $n\in\mathbb{N}$. Since $\m$ is a product measure and 
 	$f_k=f_{k-n} \equiv 1$ for all 
 	$k \leq  1$, we have
 	\[
 	\int_{\Omega } \left(\frac{1}{\left(T^n\right)'(x)}\right)^2d\mu(x)=
 	\prod_{k=1}^\infty 
 	\int_0^1 \left(\frac{f_{k}(u)}{f_{k-n}(u)}\right)^2f_k(u) du.
 	\]
 	First note that for $2\leq k\leq n+1$,
 	we have 
 	$f_{k-n}\equiv 1$ and thus,
 	\begin{align*}
 		\int_0^1\left(\frac{f_{k}(u)}{f_{k-n}(u)}\right)^2f_k(u)du&= \int_0^1f_k(u)^3du\\
 		&=\int_0^1\left(\lambda^3\1_{A_k}+\lambda^{-3}\1_{B_k}+\1_{(A_k\cup B_k)^c}\right)du\\
 		&=1+\left(\lambda^3-1\right)\left|A_k\right|+\left(\lambda^{-3}-1\right)\left|B_k\right|\\
 		&=1+\left(\lambda^3-1+\lambda^{-2}-\lambda\right)\left|A_k\right|\\
 		&\leq \exp\left(\frac{c(\lambda)}{2}a_k\right).
 	\end{align*}
 	For every $k\geq n+2$, since $B_k\subset B_{k-n}$ and $A_k\subset A_{n-k}$ we see that,
 	\[
 	\left(\frac{f_{k}}{f_{k-n}}\right)^2f_k=\lambda \1_{A_k}+\lambda^{-2}\1_{A_{k-n}\setminus A_k}+\lambda^{-1}\1_{B_k}+\lambda^2\1_{B_{n-k}\setminus B_k}+\1_{ \left(A_{k-n}\cup B_{k-n}\right)^c}.
 	\]
 	Adding and subtracting $\lambda \1_{A_{k-n}\setminus A_k}+\lambda^{-1}\1_{B_{k-n}\setminus B_k}$ to the right hand side shows that 
 	\begin{align*}
 		\left(\frac{f_{k}}{f_{k-n}}\right)^2f_k&=
 		\Big[\big(\lambda \1_{A_{k-n}}+\lambda^{-1}\1_{B_{n-k}}+\1_{\left(A_{k-n}\cup B_{k-n}\right)^c}\big)+\left(\lambda^{-2}-\lambda \right)\1_{A_{k-n}\setminus A_k} \\
 		&\ \ \ +\left(\lambda^2-\lambda^{-1}\right)\1_{B_{k-n}\setminus B_k} \Big]\\
 		&=f_{k-n}+\left(\lambda^{-2}-\lambda \right)\1_{A_{k-n}\setminus A_k}+\left(\lambda^2-\lambda^{-1}\right)\1_{B_{k-n}\setminus B_k}
 	\end{align*}
 	Integrating we see that for all $k\geq n+2$, 
 	\begin{align*}
 		\int_0^1 \left(\frac{f_{k}(u)}{f_{k-n}(u)}\right)^2f_k(u) du&=
 	\bigg[ \int_0^1 f_{k-n}(u) du+\left(\lambda^{-2}-\lambda \right) \left|A_{k-n}\setminus A_k\right| \\
 		&\ \ \ \ +\left(\lambda^2-\lambda^{-1}\right)\left|B_{k-n}\setminus B_k\right| \bigg]   \\
 		&=1+\left(\lambda^{-2}-\lambda+\lambda\left(\lambda^2-\lambda^{-1}\right)\right)\left(\left|A_{k-n}\right|-\left|A_k\right|\right)\\
 		&\leq \exp\left(\frac{c(\lambda)}{2}\left(a_{k-n}-a_{k}\right)\right).
 	\end{align*}
 	Combining the inequalities we have obtained, we see that
 	\begin{align*}
 		\int_{\Omega } \left(\frac{1}{\left(T^n\right)'}\right)^2d\m&\leq \exp\left(\frac{c(\lambda)}{2}\left(\sum_{k=2}^{n+1}a_k+\sum_{k=n+2}^\infty \left(a_{k-n}-a_k\right)\right)\right)\\
 		&=\exp\left(c(\lambda)\sum_{k=2}^{n+1}a_k\right).
 		\qedhere
 	\end{align*}

 \end{proof}	
 
 \begin{lemma}
 	\label{cons}
 	The Bernoulli shift associated with the densities defined in Section \ref{def} is conservative. 
 \end{lemma}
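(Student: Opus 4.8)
The plan is to use the standard Maharam–Halmos–Hopf criterion for conservativity: a nonsingular transformation $T$ of a $\sigma$-finite measure space is conservative provided $\sum_{n=1}^{\infty} (T^n)'(x) = \infty$ for $\mu$-almost every $x$, or more usefully here, provided there is a set $W$ of positive measure on which $\sum_{n} \mathbf 1_W(T^{-n}x)\,(T^{-n})'(x)$ diverges; one convenient way to get this is to show $\sum_{n=1}^{\infty} \frac{1}{(T^n)'} = \infty$ almost surely, or to show the sum of the $L^1$-type quantities $\int_\Omega \big((T^n)'\big)^{-1}\,d\mu$ diverges in a way that, combined with a second-moment bound, forces divergence almost surely. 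I would use exactly the second route, since Lemma \ref{lem: VW} has just handed us the second-moment estimate $\int_\Omega \big((T^n)'\big)^{-2}\,d\mu \le \exp\!\big(c(\lambda)\sum_{k=2}^{n+1}a_k\big)$.

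First I would record the first-moment identity: since $\m$ is a product measure and $f_k = f_{k-n}\equiv 1$ for $k\le 1$, one computes
\[
\int_\Omega \frac{1}{(T^n)'}\,d\m = \prod_{k=1}^\infty \int_0^1 \frac{f_k(u)}{f_{k-n}(u)}\, f_k(u)\,du = \prod_{k=1}^\infty \int_0^1 \frac{f_k(u)^2}{f_{k-n}(u)}\,du,
\]
and a computation entirely parallel to the one in Lemma \ref{lem: VW} (but with the exponent $2$ in place of $3$, i.e. $f_k^2$ rather than $f_k^3$, and with the analogous telescoping for $k\ge n+2$) shows this product is bounded below by a constant independent of $n$ — indeed each factor is $1 + O(a_k)$ or $1+O(a_{k-n}-a_k)$ with the sum of the error terms over $k$ uniformly bounded by $C\sum_{k\ge 2}a_k$... wait, that diverges. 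The correct statement is that $\int_\Omega (T^n)'^{-1}\,d\m \ge c_0 > 0$ for all $n$, because the ``dangerous'' factors are the $k\in\{2,\dots,n+1\}$ ones and these contribute $\prod (1 - c'(\lambda) a_k)$, whose logarithm is $-\Theta(\sum_{k=2}^{n+1} a_k)$ — so in fact this lower bound degrades. This is the crux, so let me restate the strategy: one does \emph{not} try to bound $\int (T^n)'^{-1}$ from below uniformly, but rather chooses a clever set $W$ (e.g. cylinders where finitely many coordinates avoid $A_2\cup B_2$) on which the relevant Radon–Nikodym cocycle is controlled, and applies Lemma \ref{lem: VW} together with a Paley–Zygmund / second-moment argument on partial sums $\sum_{n=1}^N (T^n)'^{-1}\mathbf 1_W\circ T^{-n}$.

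Concretely, the key steps in order: (1) recall the criterion that $T$ is conservative iff $\sum_{n\ge 1} \mathbf 1_W\circ T^n \cdot (T^n)' = \infty$ a.e.\ on $W$ for some (hence any) positive-measure $W$, equivalently via the dual that $\sum_n \int_W (T^n)'^{-1}\circ T^{-n}\,d\mu$ behaves appropriately; (2) estimate the first moment $S_N := \sum_{n=1}^N \int_\Omega (T^n)'^{-1}\,d\m$ from below, showing $S_N \to \infty$ — this uses $\sum a_k = \infty$ from \eqref{rate} and a computation mirroring Lemma \ref{lem: VW}; (3) invoke Lemma \ref{lem: VW} to bound $\int_\Omega \big(\sum_{n=1}^N (T^n)'^{-1}\big)^2 d\m$ by $\big(\sum_{n=1}^N \exp(\tfrac{c(\lambda)}{2}\sum_{k=2}^{n+1}a_k)^{1/2}\cdot\dots\big)$ via Cauchy–Schwarz, and check that the ratio (second moment)/(first moment)$^2$ stays bounded, which is where the precise rate $a_n = 1/((n+4)\log(n+4))$ matters: $\sum_{k=2}^{n+1} a_k \sim \log\log n$, so $\exp(c(\lambda)\sum a_k)$ grows only like a power of $\log n$, slowly enough that the second-moment method goes through; (4) conclude by Paley–Zygmund that $\sum_n (T^n)'^{-1} = \infty$ on a set of positive measure, hence (by ergodicity of the tail, or by a $0$–$1$ argument, or simply by Hopf's decomposition being $T$-invariant) almost everywhere, giving conservativity.

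The main obstacle is step (3)–(4): making the second-moment argument honest. One must be careful that $\int (T^n)'^{-1}\,d\m$ really does go to infinity when summed (not just stays positive), and that the correlations $\int (T^m)'^{-1}(T^n)'^{-1}\,d\m$ for $m\ne n$ are controlled — here one either bounds them crudely by Cauchy–Schwarz using Lemma \ref{lem: VW}, or computes them directly exploiting the product structure (the cocycles $(T^n)'$ and $(T^m)'$ depend on overlapping but shifted blocks of coordinates). The slow growth $\exp(c(\lambda)\log\log n) = (\log n)^{c(\lambda)}$ of the $L^2$ bound against the divergence $S_N \asymp \sum_{n\le N}(\log n)^{-c(\lambda)}\cdot(\text{something})\to\infty$ is exactly the quantitative heart of why the specific choice \eqref{choice} of $a_n$ works, and verifying this balance is the one place real care is needed; everything else is bookkeeping with the product measure and the explicit form \eqref{def of f_n} of $f_n$.
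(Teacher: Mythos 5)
There is a genuine gap: your argument is aimed at the wrong sum. You correctly quote Hopf's criterion, which asks for the divergence of $\sum_{n\ge 1}(T^n)'(x)$, i.e.\ of the Radon--Nikodym derivatives themselves, but steps (2)--(4) of your plan are entirely about the reciprocals $((T^n)')^{-1}$, and you propose to conclude conservativity from $\sum_{n}((T^n)')^{-1}=\infty$. That implication is false. For example, take $Tx=x+1$ on $\R$ with the standard Gaussian measure: then $(T^n)'(x)=e^{-nx-n^2/2}$, so $\sum_n ((T^n)')^{-1}=\infty$ at every point, yet the map is totally dissipative (the interval $[0,1)$ is wandering). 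In fact, moments of the reciprocal are always at least $1$ (this is exactly Lemma \ref{Claim: V is D} of the paper, valid for any nonsingular map), so the divergence of $\sum_n\int ((T^n)')^{-1}d\mu$ carries no conservativity information whatsoever; no amount of care in the Paley--Zygmund/correlation step can rescue the plan as stated. Moreover, even if redirected at the correct sum $\sum_n (T^n)'$, a second-moment argument would only give divergence on a set of positive measure and would still need a separate zero--one or invariance argument. (Your side computation is also off: $\int_0^1 f_k^2\,du=1+\lambda^{-1}(1-\lambda)^2(1+\lambda)\,a_k\ge 1$, so the first moment of $((T^n)')^{-1}$ grows rather than degrades -- but this is moot.)

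The paper uses the estimate of Lemma \ref{lem: VW} in a different and much simpler way: purely as a tail bound for the event that $(T^n)'$ is small. Since $\sum_{k=2}^{n}a_k=\log\log n+O(1)$, Lemma \ref{lem: VW} gives $\int_\Omega ((T^n)')^{-2}d\m\le K(\log(n+1))^{c(\lambda)}$, and Markov's inequality yields
$\m\ns{x: (T^n)'(x)<n^{-1}}=\m\ns{x: ((T^n)'(x))^{-2}>n^{2}}\le K(\log n)^{c(\lambda)}/n^{2}$,
which is summable in $n$ precisely because the exponential of $c(\lambda)\sum_{k\le n+1}a_k$ is only polylogarithmic. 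The first Borel--Cantelli lemma then gives $(T^n)'(x)\ge 1/n$ for all $n\ge n(x)$ for $\m$-a.e.\ $x$, so $\sum_n (T^n)'(x)=\infty$ by comparison with the harmonic series, and Hopf's criterion (\cite[Proposition 1.3.1]{AaroBook}) gives conservativity directly -- no first-moment lower bounds, no correlation estimates, and no auxiliary set $W$ are needed. If you want to keep a second-moment flavour, the fix is to use your $L^2$ bound on $((T^n)')^{-1}$ exactly as above, as a Chebyshev-type estimate forcing $(T^n)'$ to be eventually bounded below by $1/n$, rather than trying to prove divergence of the reciprocal series.
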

 \begin{proof}
 	By \eqref{choice} 
 	$$\sum_{k=2}^n a_k=\log\log(n)+O(1).$$  By Lemma \ref{lem: VW},  there exists 
 	$K>1$ 
 	such that for all $n\in\mathbb{N}$,
 	\begin{equation}\label{eq: VW}
 		\int_{\Omega } \left(\frac{1}{\left(T^n\right)'}\right)^2d\m\leq 
 		K
 		\left(\log(n+1)\right)^{c(\lambda)}. 
 	\end{equation}
 	Markov's inequality give that
 	\begin{align*}
 	\m\left\{ x\in\Omega: (T^n)'(x)^{-2}>n^{2}\right\}
 		&\leq 
 		K
 		\frac{\left(\log(n)\right)^{c(\lambda)}}{n^2}.
 	\end{align*}
 	Since 
 	$$	\m\big\{ x\in\Omega: \left(T^n\right)'(x)<n^{-1}\big\} =  	\m\left\{ x\in\Omega: (T^n)'(x)^{-2}>n^{2}\right\},$$
 	by the first Borel-Cantelli lemma, for almost every $x\in\Omega$ there exists $n(x)\in\mathbb{N}$ such that for all $n\geq n(x)$, we have $\left(T^n\right)'(x)\geq \frac{1}{n}$.   Since the harmonic series diverges,  the comparison test gives that for almost every $x\in\Omega$,  we have
 	\[
 	\sum_{n=1}^\infty\left(T^n\right)'(x)=\infty.
 	\]
It follows from the Hopf criteria \cite[Proposition 1.3.1]{AaroBook} that $T$ is conservative.
 \end{proof}
 
 \begin{corollary}
 	\label{together}
 	The Bernoulli shift associated with the densities defined in Section \ref{def} is nonsingular, conservative, and ergodic. 
 	\end{corollary}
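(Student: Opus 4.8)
The plan is to assemble the three properties from the lemmas just proved and the structural facts established earlier in Section \ref{proof-exists}. Nonsingularity is exactly the content of Lemma \ref{nonsing}, and conservativity is exactly Lemma \ref{cons}, so the only real work is deducing ergodicity. First I would recall the discussion following \eqref{eq: Kakutani}: the Bernoulli shift $(\Omega,\borel,\m,T)$ with $\Omega=[0,1]^{\Z}$ is invertible, and since $\m$ is a product measure, $T$ is a $K$-automorphism in the sense of Silva and Thieullen, taking $\mathcal{G}$ to be the sub-sigma-algebra generated by the coordinates indexed by $n \le 0$ (or any half-line). Indeed $\m|_{\mathcal{G}}$ is $\sigma$-finite, $T^{-1}\mathcal{G}\subset\mathcal{G}$, the intersection $\bigcap_n T^{-n}\mathcal{G}$ is the tail sigma-algebra of an independent sequence and hence trivial mod $\m$ by the Kolmogorov zero-one law, $\bigvee_n T^n\mathcal{G}=\borel$, and $T'$ is $\borel$-measurable by \eqref{lambda}. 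Hence $T$ is a $K$-automorphism.

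Next I would invoke Lemma \ref{ST-K} (Silva--Thieullen): a $K$-automorphism is ergodic if and only if it is conservative. Since Lemma \ref{cons} gives conservativity, ergodicity follows immediately. Combining this with Lemma \ref{nonsing} yields that the system is nonsingular, conservative, and ergodic, which is precisely the assertion of Corollary \ref{together}. In proof form this is essentially a three-line citation of Lemmas \ref{nonsing}, \ref{cons}, \ref{ST-K} once the $K$-automorphism property is noted.

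The only genuine step requiring care — the ``main obstacle'', though it is a mild one — is verifying cleanly that the Bernoulli shift really is a $K$-automorphism with a $\sigma$-finite factor sigma-algebra in the nonsingular sense demanded by Silva--Thieullen's definition; in particular one must check that $\m$ restricted to the chosen half-line sigma-algebra is $\sigma$-finite (it is, being a countable product of probability measures) and that the fourth bullet, measurability of $T'$, holds, which is immediate from the explicit product formula \eqref{lambda}. Everything else is bookkeeping. I would therefore write the proof of Corollary \ref{together} as: nonsingularity is Lemma \ref{nonsing}; the system is a $K$-automorphism because $\m$ is a product measure and the tail field of an i.i.d.\ sequence is trivial; conservativity is Lemma \ref{cons}; hence by Lemma \ref{ST-K} the system is ergodic.

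\begin{proof}[Proof of Corollary \ref{together}]
Nonsingularity is the content of Lemma \ref{nonsing}. Since $\m = \bigotimes_{n\in\Z} f_n$ is a product measure on $\Omega = [0,1]^{\Z}$, the shift $T$ is a $K$-automorphism in the sense of Silva and Thieullen: taking $\mathcal{G}$ to be the sub-sigma-algebra generated by the coordinates indexed by the nonpositive integers, we have that $\m|_{\mathcal{G}}$ is $\sigma$-finite (indeed it is a countable product of probability measures), $T^{-1}\mathcal{G}\subset\mathcal{G}$, the intersection $\bigcap_{n\in\N}T^{-n}\mathcal{G}$ is the tail sigma-algebra of the independent sequence of coordinates and hence trivial modulo $\m$ by the Kolmogorov zero-one law, $\bigvee_{n\in\N}T^n\mathcal{G} = \borel$ modulo $\m$, and $T'$ is $\borel$-measurable by \eqref{lambda}. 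By Lemma \ref{cons} the system is conservative, so Lemma \ref{ST-K} implies that $T$ is ergodic.
\end{proof}
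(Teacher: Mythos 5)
Your proposal follows the paper's own route exactly: nonsingularity from Lemma \ref{nonsing}, the observation that the half-stationary product measure makes $T$ a $K$-automorphism via Kolmogorov's zero-one law, conservativity from Lemma \ref{cons}, and ergodicity from Lemma \ref{ST-K}. The only flaw is in your explicit verification of the $K$-property: with the left-shift convention $(Tx)_i=x_{i+1}$, the sigma-algebra $\mathcal{G}$ generated by the coordinates indexed by the \emph{nonpositive} integers satisfies $T^{-1}\mathcal{G}\supset\mathcal{G}$ rather than $T^{-1}\mathcal{G}\subset\mathcal{G}$, so your parenthetical ``or any half-line'' is false and the bullet as written fails; you must instead take $\mathcal{G}$ generated by the coordinates $n\geq 0$, for which $T^{-n}\mathcal{G}$ is the sigma-algebra of coordinates $\geq n$ and the intersection is the (trivial) tail at $+\infty$. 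This choice is also what makes the last condition work in the form Silva--Thieullen actually need ($\mathcal{G}$-measurability of $T'$, the paper's ``$\mathcal{F}$'' being best read that way): by \eqref{lambda} and the half-stationarity $f_n\equiv 1$ for $n\leq 1$, the derivative $T'(x)=\prod_k f_{k-1}(x_k)/f_k(x_k)$ depends only on the coordinates $x_k$ with $k\geq 2$, hence is measurable with respect to the future sigma-algebra but not the past one. With that orientation corrected, your argument is the paper's proof with the $K$-automorphism verification written out in detail.
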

 
 \begin{proof}
We already know the associated Bernoulli shift is nonsingular from   Lemma \ref{nonsing}.  	Product measures satisfy 
Kolmogorov's 
zero-one law \cite[Appendix]{MR0079843} and thus all 
half-stationary  
nonsingular Bernoulli shifts are $K$-automorphisms.  The result follows from Lemma \ref{cons}  and Lemma \ref{ST-K}.
 	\end{proof}

 \subsection{Maharam extensions}
 	\label{ME-Z}

 	A useful duality in studying the  ratio set is given by the following skew product.  The \dff{Maharam extension} of a nonsingular group action $\big(\Omega,\mathcal{\F},\mu,\left(T_g\right)_{g\in G}\big)$ is a $G$ action on $\Omega \times \mathbb{R}$, given by
 	\[
 	\tilde{T}_g(x,u)=\Big(T_gx,u-\log\frac{d\mu\circ T_g}{d\mu}(x)\Big),
 	\]
 	which preserves the measure given by $$\nu\left(A\times I\right):=\mu(A)\int_I e^{u}du,$$ 
 	for all $A\in\mathcal{F}$ and intervals $I\subset \mathbb{R}$.  
 	By the celebrated result of  Maharam \cite{MR169988}, in the case where $G= \Z$, the $\mathbb{Z}$-action $\left(\Omega,\mathcal{F},\mu,T\right)$ is conservative if and only if its Maharam extension is conservative with respect to $\nu$.

Let $\lambda \in (0,1)$.  Consider the nonsingular system $(\Omega, \F, \mu, T)$.   Suppose that for $\mu$-almost every $x\in \Omega$, we have  
\begin{equation}
	\label{log-lambda}
\varphi_\mu(x):=\log_\lambda\left(\frac{d\mu\circ T}{d\mu}(x)\right)\in\mathbb{Z}.
\end{equation}
   Then we define the \dff{discrete Maharam extension} on $\Omega \times \mathbb{Z}$  by
 	\begin{equation}
 	\tilde{T}(x,n):=\left(Tx,n-\varphi_\mu(x)\right).	
 	\end{equation}
 	The discrete Maharam extension preserves the measure  $\tilde{\mu}$ such that for all $A\in\F$ and $n\in\bb{Z}$, we have $\tilde{\mu}(A\times\{n\})=\lambda^{n}\mu(A)$. 
 	More generally, in the context of a group action $(T_g)_{g \in G}$ if 
\begin{equation}
 \label{log-lambda2}
\varphi_\mu(x,g):= \log_\lambda\left(\frac{d\mu\circ T_g}{d\mu}(x)\right)\in\mathbb{Z}
 \end{equation}
 for all $g \in G$, then we define the \dff{discrete Maharam extension of the $G$-action} by 
 	$$ 	\tilde{T}_g(x,n):=\left(T_gx,n-\varphi_\mu(x,g)\right).$$

 	\begin{theorem} 
 		\label{duality}
 		 Let $\lambda \in (0,1)$.  Let $(\Omega, \F, \mu, (T_g)_{g \in G})$ be a nonsingular group action such that \eqref{log-lambda2} holds.  Then the nonsingular system is conservative if and only if the discrete  Maharam extension of the $G$-action  is conservative and furthermore the nonsingular system is  of type-$\mathrm{III}_\lambda$ if and only if the discrete  Maharam extension is ergodic.  
 		\end{theorem}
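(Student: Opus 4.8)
The plan is to establish the two equivalences separately, in each case viewing the discrete Maharam extension $\tilde T$ as the skew product of the base action $(\Omega,\F,\mu,(T_g)_{g\in G})$ by the $\Z$-valued cocycle $\varphi_\mu$ (acting on $\Z$ by translation), carrying the invariant measure $\tilde\mu$.

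For conservativity, I would pass through the continuous Maharam extension. Since every Radon--Nikodym derivative $d\mu\circ T_g/d\mu$ is $\lambda^{\Z}$-valued, the continuous Maharam extension $(\Omega\times\R,\nu,(\tilde T_g^{c}))$ never moves the $\R$-coordinate off a single coset of $\Z\log\lambda$; fixing the transversal $[0,-\log\lambda)$ of $\R/\Z\log\lambda$, the map $(x,n)\mapsto(x,c+n\log\lambda)$ conjugates, for each $c$, the discrete Maharam extension (with its measure multiplied by $e^{c}$) onto the restriction of $\tilde T^{c}$ to $\Omega\times(c+\Z\log\lambda)$, and integrating $c$ out recovers $\nu$. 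Hence a positive-measure wandering set for $\tilde T$ transfers to one for $\tilde T^{c}$ and conversely, so $\tilde T^{c}$ is conservative iff $\tilde T$ is. Combined with Maharam's theorem --- the nonsingular system is conservative iff $\tilde T^{c}$ is, recalled for $G=\Z$ in Section~\ref{ME-Z} from \cite{MR169988} and valid for general countable $G$, see \cite{dal-siv-up,MR662736} --- this yields the conservativity equivalence. (The implication ``$\tilde T$ conservative $\Rightarrow T$ conservative'' is in any case immediate, by applying recurrence of $\tilde T$ to the sets $A\times\{0\}$, $A\in\F$.)

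For the type statement, I would first reduce to $T$ conservative and ergodic: if $\tilde T$ is ergodic it is conservative, hence so is $T$, and since each $T$-invariant $E$ lifts to the $\tilde T$-invariant set $E\times\Z$, $T$ is ergodic; conversely if the nonsingular system is not conservative-and-ergodic it is not of type $\mathrm{III}_\lambda$ and, by the same two remarks, $\tilde T$ is not ergodic. Granted $T$ conservative ergodic, the hypothesis \eqref{log-lambda2} forces $e(T)\subseteq\Z\log\lambda$ (as noted after that display), so $T$ is of type $\mathrm{III}_\lambda$ exactly when $\log\lambda\in e(T)$, and it remains to show $\tilde T$ is ergodic iff $\log\lambda\in e(T)$. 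For ``$\Rightarrow$'': given $\mu(A)>0$, ergodicity of $\tilde T$ makes the $\tilde T$-saturation of $A\times\{1\}$ conull, so some $g$ gives $\tilde\mu\big((A\times\{0\})\cap\tilde T_g(A\times\{1\})\big)>0$, which unwinds to $\mu\big(\{x\in A:T_gx\in A,\ \varphi_\mu(x,g)=1\}\big)>0$; since $\varphi_\mu(x,g)=1$ means $d\mu\circ T_g/d\mu(x)=\lambda$, this says precisely $\log\lambda\in e(T)$. For ``$\Leftarrow$'': for a $\tilde T$-invariant $F=\bigsqcup_n F_n\times\{n\}$, the set $N(x):=\{n:x\in F_n\}$ satisfies $N(T_gx)=N(x)-\varphi_\mu(x,g)$ along orbits, so the shift-orbit of $N(x)$ in the space of subsets of $\Z$ is a $T$-invariant, hence essentially constant, point of the orbit space; if that orbit is $\{\Z\}$ then $F$ is conull, if it is $\{\emptyset\}$ then $\tilde\mu(F)=0$, and otherwise, reading off a transfer function, $\varphi_\mu$ is cohomologous to a cocycle valued in a proper subgroup of $\Z$, so after a change of $\mu$ within its measure class the Radon--Nikodym cocycle is $(\lambda^{p})^{\Z}$-valued for some integer $p\ge2$, or identically $1$, and in either case $\log\lambda\notin e(T)$. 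Thus $\log\lambda\in e(T)$ forces every positive-measure $\tilde T$-invariant set to be conull. (Equivalently, the converse direction is the standard description of ergodicity of a skew product via the essential range of its cocycle, see \cite{AaroBook,dal-siv-up}, together with the identification of the essential range of $\varphi_\mu$ with $(\log\lambda)^{-1}e(T)$.)

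The step I expect to cost the most is isolating the cocycle/skew-product facts at precisely the needed generality --- a nonsingular, merely conservative (not measure-preserving) base action of an arbitrary countable group, carrying a $\Z$-valued cocycle --- and, failing a ready citation there, supplying the routine measurable selection of the base set $N_0$ and the transfer function $x\mapsto k(x)$ from the essentially constant shift-orbit in the ``$\Leftarrow$'' argument (which goes through since $T$ is ergodic and the orbit space is separated by countably many Borel sets). The rest is bookkeeping with $\varphi_\mu$ and the Maharam machinery already in place.
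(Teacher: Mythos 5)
Your conservativity half (reducing the continuous extension to the discrete one over the cosets of $\Z\log\lambda$ and quoting Maharam's theorem) and your ``$\tilde T$ ergodic $\Rightarrow$ type-$\mathrm{III}_\lambda$'' half are sound; note, though, that the paper itself proves the whole statement by citation to Schmidt's monograph (Cor.~5.4 and Thm.~5.5 there), which covers both equivalences at once for general cocycles, so your final parenthetical fallback is essentially the paper's route. The genuine gap is in your ``$\log\lambda\in e(T)\Rightarrow\tilde T$ ergodic'' argument. You pass to the map $x\mapsto N(x)\in 2^{\Z}$ and claim that the $T$-invariant map $x\mapsto[N(x)]$ into the space of subsets of $\Z$ modulo translation is essentially constant ``since the orbit space is separated by countably many Borel sets.'' That is false: the translation ($=$ shift) action of $\Z$ on $2^{\Z}$ is the prototypical non-smooth action, its quotient Borel structure is not countably separated, and ergodicity of $T$ does not force invariant maps into such a quotient to be a.e.\ constant. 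Correspondingly, the intermediate statement you extract --- that a nontrivial $\tilde T$-invariant set forces $\varphi_\mu$ to be cohomologous to a cocycle with values in a proper subgroup of $\Z$ (or to be a coboundary) --- is false in general: type-$\mathrm{III}_0$ systems whose Radon--Nikodym derivatives lie in $\lambda^{\Z}$ (non-regular cocycles) have non-ergodic discrete Maharam extensions, yet their cocycle need not be cohomologous into any proper subgroup. The impossibility of making that selection is exactly where the content of the theorem lies, so this step cannot be waved through.

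The repair is shorter than what you wrote and never mentions the orbit space. Let $F\subset\Omega\times\Z$ be $\tilde T$-invariant with fibers $F_n$, and suppose $\mu(F_n\triangle F_{n+1})>0$ for some $n$; since essential values form a group, both $\pm\log\lambda$ lie in $e(T)$, so (say with $A=F_n\setminus F_{n+1}$ of positive measure) there is $g$ and a positive-measure set of $x\in A$ with $T_gx\in A$ and $\varphi_\mu(x,g)=-1$; invariance applied to $(x,n)\in F$ gives $T_gx\in F_{n+1}$, contradicting $T_gx\in A$. Hence all fibers agree mod $\mu$, so $F=B\times\Z$ with $B$ a $T$-invariant set, and ergodicity of $T$ (which you correctly deduced beforehand) makes $F$ null or conull. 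With this substitution, together with your reduction to the case $T$ conservative ergodic, your ``$\Leftarrow$'' direction closes; the rest of your proposal stands, modulo making sure the cited general-group Maharam conservativity statement is actually available in the references you name (Schmidt's Cor.~5.4, as cited in the paper, is the safe source).
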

 
 \begin{proof}
 	A proof of this theorem, in a more general setting, is given in the monograph of Klaus Schmidt \cite[Corollary 5.4, Theorem 5.5]{KS-type}.   
 	\end{proof}

 Note that by \eqref{lambda} the Bernoulli shift defined in Section \ref{def} 
 satisfies  \eqref{log-lambda}.

 \begin{theorem}
 	\label{duality-checked}
 		 The discrete Maharam extension associated with the  nonsingular Bernoulli shift defined in Section \ref{def} is a $K$-automorphism.
 	 	\end{theorem}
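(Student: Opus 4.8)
The plan is to produce a sub-$\sigma$-algebra of $\widetilde{\mathcal F}$ witnessing the first, third and fourth clauses in the definition of a $K$-automorphism, so that the statement reduces to the triviality of a single tail $\sigma$-algebra; then to bound that tail inside the $\sigma$-algebra of sets invariant under a lift of the finite permutation group; and finally to kill the latter by a Hewitt--Savage argument, which will also yield the ergodicity of the permutation action on the base.

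\emph{Step 1: the filtration.} Write $\widetilde\Omega=\Omega\times\mathbb Z$ and let $\widetilde\mu$ be the $\widetilde T$-invariant measure, $\widetilde\mu(A\times\{m\})=\lambda^{m}\mu(A)$. Let $\widetilde{\mathcal G}$ be generated by the coordinate maps $x\mapsto x_k$, $k\ge 1$, together with the projection $(x,m)\mapsto m$. Every slice $\Omega\times\{m\}$ has finite measure, so $\widetilde\mu|_{\widetilde{\mathcal G}}$ is $\sigma$-finite, and $\widetilde T$ is measure preserving, so $\widetilde T'\equiv 1$ is measurable. The only place half-stationarity enters here is that $\varphi_\mu=\log_\lambda T'=\sum_k\log_\lambda\!\big(f_{k-1}(x_k)/f_k(x_k)\big)$ depends only on the coordinates $x_k$ with $k\ge 2$, since $f_n\equiv 1$ for $n\le 1$; consequently, for $E\in\widetilde{\mathcal G}$, the set $\widetilde T^{-1}E$ depends only on the $x_k$ with $k\ge 2$ and on $m$, so $\widetilde T^{-1}\widetilde{\mathcal G}\subset\widetilde{\mathcal G}$. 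Finally $\bigvee_{n}\widetilde T^{n}\widetilde{\mathcal G}=\widetilde{\mathcal F}$, because $\bigvee_n T^{n}\sigma(x_k:k\ge 1)$ already exhausts the coordinate $\sigma$-algebra of $\Omega$ while $\sigma(m)\subset\widetilde{\mathcal G}$. Hence $\widetilde T$ is a natural extension of an endomorphism, and it remains only to prove that $\widetilde{\mathcal T}:=\bigcap_{n}\widetilde T^{-n}\widetilde{\mathcal G}$ is $\widetilde\mu$-trivial.

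\emph{Step 2: tail $\subseteq$ exchangeable.} For a finite permutation $\pi$ of $\mathbb Z$ the coordinate permutation on $(\Omega,\mu)$ has an integer-valued log-Radon--Nikodym cocycle $c_\pi:=\log_\lambda\frac{d\mu\circ\pi}{d\mu}$ (each $f_j$ takes values in $\{\lambda^{-1},1,\lambda\}$), so it lifts to a $\widetilde\mu$-preserving automorphism $\widetilde\pi(x,m)=(\pi x,\,m-c_\pi(x))$ of $\widetilde\Omega$; the chain rule $c_{gh}=c_h+c_g\circ h$ makes $g\mapsto\widetilde g$ a homomorphism, so for a transposition $\tau_{i,j}$ one has $\widetilde T^{n}\widetilde\tau_{i,j}\widetilde T^{-n}=\widetilde\tau_{i-n,j-n}$, as $T^{n}\tau_{i,j}T^{-n}=\tau_{i-n,j-n}$ on $\Omega$. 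Now let $F\in\widetilde{\mathcal T}$ and fix $i<j$. Choosing $n\ge\max(i,j)$ we may write $\mathbf 1_F=H\circ\widetilde T^{n}$ with $H$ $\widetilde{\mathcal G}$-measurable, while $\widetilde\tau_{i-n,j-n}$ permutes only coordinates $\le 0$ and, because $f_k\equiv 1$ there, has $c_{\tau_{i-n,j-n}}\equiv 0$, so it fixes every $\widetilde{\mathcal G}$-measurable function. Hence $\mathbf 1_F\circ\widetilde\tau_{i,j}=H\circ\widetilde\tau_{i-n,j-n}\circ\widetilde T^{n}=H\circ\widetilde T^{n}=\mathbf 1_F$. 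Since transpositions generate the finite permutation group, $\widetilde{\mathcal T}$ is contained in the exchangeable $\sigma$-algebra $\widetilde{\mathcal E}$ of the $\widetilde\pi$-action.

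\emph{Step 3: exchangeability is trivial.} It remains to show the $\widetilde\pi$-action on $(\widetilde\Omega,\widetilde\mu)$ is ergodic; this is where I expect the real work to lie. It will also give the ergodicity of the coordinate-permutation action on $(\Omega,\mu)$, since for permutation-invariant $A\subset\Omega$ the set $A\times\mathbb Z$ is $\widetilde\pi$-invariant, forcing $\widetilde\mu(A\times\mathbb Z)=0$ or $\widetilde\mu(A^{c}\times\mathbb Z)=0$, that is $\mu(A)\in\{0,1\}$. The argument is Hewitt--Savage: given an $\widetilde{\mathcal E}$-measurable $F$ with $0<\widetilde\mu(F)<\infty$, approximate it in $L^{1}(\widetilde\mu)$, inside a finite-mass slab $\Omega\times[-R,R]$, by a set depending only on $x_1,\dots,x_N$ and on $m$, then apply the lift $\widetilde\sigma$ of the permutation interchanging the block $\{1,\dots,N\}$ with a far block $\{M+1,\dots,M+N\}$. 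On each slice the approximant and its $\widetilde\sigma$-image are cylinder events on disjoint coordinates, hence independent, and $c_\sigma$ is bounded by $2N$ uniformly in $M$, so $\widetilde\sigma$ only translates the $\mathbb Z$-coordinate by a bounded amount; the usual estimate then forces each slice of $F$ to be $\mu$-trivial, and permutations whose log-cocycle realizes a prescribed integer on a positive-measure set (products of transpositions of a coordinate $k\le 1$ with a far coordinate landing in the appropriate $A$- or $B$-set) glue the slices together, giving $\widetilde\mu(F)\in\{0,\infty\}$. The two genuine difficulties are that $\widetilde\mu$ is infinite --- handled by exhausting with the slabs and absorbing the bounded $\mathbb Z$-translation --- and, more seriously, that the $f_n$ are not identical and $\mu$ is mutually singular with the i.i.d.\ uniform measure, so a block swap does not preserve laws; here one controls the relevant Radon--Nikodym derivatives and chooses the far block using the same summability $\sum_n\big(\sqrt{f_n}-\sqrt{f_{n-1}}\big)^{2}<\infty$, equivalently $\sum_n(a_{n-1}-a_n)<\infty$, that underlies Lemma~\ref{nonsing}. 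Once $\widetilde{\mathcal E}$, and hence $\widetilde{\mathcal T}$, is trivial, all four clauses of the definition hold and $\widetilde T$ is a $K$-automorphism.
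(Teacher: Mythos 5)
Your Steps 1 and 2 are correct and essentially reproduce the paper's reduction: exhibiting $\widetilde T$ as the natural extension of the one-sided skew product and showing that the tail sigma-algebra is contained in the sigma-algebra of sets invariant under the lifted $\fix$-action (the paper's Lemma \ref{center}); your conjugation trick $\widetilde T^{n}\widetilde\tau_{i,j}\widetilde T^{-n}=\widetilde\tau_{i-n,j-n}$, using half-stationarity to make the shifted transposition act on coordinates where $f_k\equiv1$, is a clean substitute for the paper's explicit computation of the tail cocycle $\psi_\sigma$. The problem is Step 3, which is exactly where you say "the real work" lies, and your sketch of it would not close.

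Ergodicity of the lifted $\fix$-action on $\Omega\times\Z$ is equivalent, via Theorem \ref{duality}, to the $\fix$-action being of type-$\mathrm{III}_\lambda$ (Theorem \ref{perm-type}), and this is not a soft Hewitt--Savage statement: it genuinely depends on the divergence $\sum_n a_n=\infty$ in \eqref{rate}, which your Step 3 never uses. To see that your proposed estimates are insufficient, replace \eqref{choice} by any choice with $\sum_n a_n<\infty$. Then by Kakutani $\mu$ is equivalent to the exchangeable product Lebesgue measure, the cocycle $c_\sigma=\log_\lambda\frac{d\mu\circ\sigma}{d\mu}=h\circ\sigma-h$ is a coboundary with $h(x)=\log_\lambda\prod_n f_n(x_n)$, and $(x,m)\mapsto(x,m+h(x))$ conjugates the lifted action to $\sigma\times\mathrm{id}$; the sets $\{(x,m):m+h(x)=j\}$ are then invariant and their slices have $\mu$-measure strictly between $0$ and $1$. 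Yet every ingredient you invoke survives in that situation: block swaps still have $|c_\sigma|\le 2N$ uniformly in $M$, far blocks are still independent, and the summability $\sum_n(a_{n-1}-a_n)<\infty$ underlying Lemma \ref{nonsing} still holds. So "the usual estimate" cannot force slice triviality; the coupling of the $\Z$-coordinate to the base through $c_\sigma$ is exactly what it fails to control (indeed, even your independence claim is off: the $\widetilde\sigma$-preimage of a slab cylinder is not a cylinder on the far block alone, since the $\Z$-coordinate is shifted by $c_\sigma$, which depends on both blocks). The paper closes this gap quantitatively: Theorem \ref{perm-type} builds, inside any rational cylinder, a partial transformation from transpositions $k\leftrightarrow 2^k$ whose Radon--Nikodym derivative equals $\lambda$ on at least half the cylinder, using $\mathbb{E}\sum Y_k\sim(1-\lambda)\log\log M\to\infty$ (this is where $\sum a_n=\infty$ enters) together with a Chebyshev bound, and then Lemma \ref{lem: CHP} plus Theorem \ref{duality} convert the essential value $\log\lambda$ into ergodicity of the lifted action (base ergodicity coming from the Aldous--Pitman tameness criterion, Lemma \ref{exch}). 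Your gluing remark (transpositions of a coordinate $k\le1$ with a far coordinate realize cocycle $\pm1$ on positive measure) is correct, but it only helps after the slices are known to be trivial, which is the missing -- and main -- step.
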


Before we prove Theorem \ref{duality-checked}, we first show,
how it leads to a proof of Theorem \ref{main-exists}.

\begin{proof}[Proof of Theorem \ref{main-exists}]
Let $\lambda \in (0,1)$.  	Consider the  Bernoulli shift defined in Section \ref{def}.   We already know it is nonsingular, conservative, and ergodic by Corollary \ref{together}.   By Theorem \ref{duality} its Maharam extension is also conservative.   By Theorem \ref{duality-checked}, the associated discrete Maharam extension is a $K$-automorphism and thus by Lemma \ref{ST-K} it is ergodic.  Thus by Theorem \ref{duality} the nonsingular Bernoulli shift is of type-$\mathrm{III}_{\lambda}$.
\end{proof} 
 
 It remains to prove Theorem \ref{duality-checked}; 
 it will be given in  Section  \ref{perm}.   	
 We will use {\em both} directions of Theorem \ref{duality} in our proof of Theorem \ref{duality-checked}.     Let $\lambda \in (0,1)$.  Consider the Bernoulli shift defined in Section \ref{def} and its one-sided version defined as follows. Let  $\B_+ = \B([0,1]^{\N})$  be the product sigma-algebra for $\Omega_{+}=[0,1]^{\N}$, $\mu_+:=\bigotimes_{n\in\mathbb{N}}f_n$, and $S$ be $T$ restricted to $[0,1]^{\N}$,  then $\left([0,1]^\mathbb{Z},\B,\mu,T\right)$ is the natural extension of 
 $\left([0,1]^\mathbb{N},\B_+,\mu,S\right)$ and the latter has a trivial tail field, by  Kolmogorov's zero-one law.

    Recall that for the {\em two}-sided system, we defined 
 $$	\varphi_\mu(x):=\log_\lambda\left(\frac{d\mu\circ T}{d\mu}(x)\right) = \log_{\lambda}  \Big( \prod_{k\in\mathbb{N}}\frac{f_{k-1}\left(x_k\right)}{f_k\left(x_k\right)} \Big)\in\mathbb{Z}.$$
  Note that the \dff{restriction} of $(T^n)'$ to $\Omega_{+}$ is  given by
  $$ \prod_{k\in\mathbb{N}}\frac{f_{k-n}\left(x_k\right)}{f_k\left(x_k\right)}$$
 is 
 $\B_{+}$-measurable; 
 in a slight abuse of notation we will also continue to denote the restriction of $\varphi_\mu$ to $\Omega_{+}$ by $\varphi_\mu$. 
 The discrete Maharam extension of $T$ is the natural extension of the skew product extension of $S$ by the restriction of  $\varphi_{\mu}$, defined by  on $\Omega_{+} \times \Z$ given by
 \[
 S_{\varphi_\mu}(x,n):=(Sx,n-\varphi_\mu(x)).
 \]
 Note that $S_{\varphi_\mu}$ preserves the measure $\widetilde{\mu_+}$ which is the restriction of $\tilde{\mu}$ to $\Omega_{+} \times \Z$.    Therefore in order to prove that $\tilde{T}$ is a $K$-automorphism, it suffices to show that the tail 
 sigma-field 
 of $S_{\varphi_\mu}$ is trivial.

It is well-known that the Hewitt-Savage zero-one law \cite{MR76206} implies Kolmogorov's zero-one law.  Similarly, we will prove that the exchangeable sigma-field of $S_{\varphi_\mu}$ is trivial. As the tail field is a subset of the exchangeable 
sigma-field, 
this will establish that the tail field is trivial and consequently that $\tilde{T}$ is a $K$-automorphism.

 \subsection{The ergodic action of the  permutation group}
 \label{perm}
 
 We say that a permutation $\sigma: \N \to \N$ of the integers \dff{fixes} an element of $n \in \N$ if $\sigma(n) = n$. Let $\fix$ be the subgroup of all permutations of $\N$ that fix all but a finite number of elements of $\N$.  Let $(f_n)_{n \in \N}$ be a collection of densities and consider the product space   $\big([0,1]^{\N}, \borel, \bigotimes_{n\in \N}f_n\big)$.    The group $\fix$ acts on  this space via $\sigma(x)_n = x_{\sigma(n)}$ for all $x \in [0,1]^{\N}$ and all $n \in \N$.

  \begin{lemma}
  	\label{exch}
  	Let $(f_n)_{n \in \N}$ be a sequence of densities all with domain $[0,1]$.   Set 
  	$$g_n(x) :=  \essinf_{x\in[0,1]}{f_n(x)} \text{ and }  G_n(x):=   \esssup_{x\in[0,1]}{f_n(x)},$$
  	where these essential bounds are taken with respect to Lebesgue measure.    
  	If for all $n \in \N$, we have the following strict bounds
  	\[
  	0<\inf_{n\in\N} g_n(x) \leq \sup_{n\in\N} G_n(x) <\infty,
  	\] 
  	then $\left(\Omega,\mathcal{B},\bigotimes_{n\in\N}f_n, \fix \right)$ is ergodic. 
  \end{lemma}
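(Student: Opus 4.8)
The plan is to run the proof of the Hewitt--Savage zero--one law (that the exchangeable $\sigma$-field of an i.i.d.\ sequence is trivial), the one genuinely new feature being that a finite permutation $\sigma$ no longer preserves $\mu:=\bigotimes_{n\in\N}f_n$. Write $m:=\inf_n\essinf f_n>0$ and $M:=\sup_n\esssup f_n<\infty$. Since $\sigma\in\Sigma$ moves only finitely many coordinates, $\mu\circ\sigma^{-1}$ is again a product measure (with the densities permuted) and its Radon--Nikodym derivative $d(\mu\circ\sigma^{-1})/d\mu=\prod_j f_{\sigma(j)}(x_j)/f_j(x_j)$ is a \emph{finite} product, pinched between powers of $m/M$ and $M/m$; in particular the $\Sigma$-action is nonsingular, so ergodicity is the assertion that every $\Sigma$-invariant $A\in\mathcal B$ has $\mu(A)\in\{0,1\}$. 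Fix such an $A$. It suffices to show that $A$ is independent of $\mathcal B$, for then $A\in\mathcal B$ forces $\mu(A)=\mu(A)^2$. By the $\pi$--$\lambda$ theorem this reduces to proving, for each fixed $k$ and all bounded measurable $h_0,\dots,h_{k-1}\colon[0,1]\to\R$,
\[
\E\Big[\mathbf 1_A\prod_{i<k}h_i(x_i)\Big]=\mu(A)\prod_{i<k}\E[h_i(x_i)].
\]

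The idea is to use the invariance $\mathbf 1_A=\mathbf 1_A\circ\sigma$ (a.e.) to transport the coordinates $0,\dots,k-1$ to far-away, mutually distinct positions $n_1<\dots<n_k$: let $\sigma$ be the involution swapping each $i<k$ with $n_{i+1}$ and fixing everything else. Changing variables $x\mapsto\sigma x$ and inserting the cocycle above gives
\[
\E\Big[\mathbf 1_A\prod_{i<k}h_i(x_i)\Big]=\E\Big[\mathbf 1_A(x)\prod_{i<k}\frac{f_{n_{i+1}}(x_i)}{f_i(x_i)}\cdot\prod_{i<k}h_i(x_{n_{i+1}})\frac{f_i(x_{n_{i+1}})}{f_{n_{i+1}}(x_{n_{i+1}})}\Big].
\]
The key point is that the ``far'' factor attached to each new position integrates \emph{exactly} to the correct marginal, $\E\big[h_i(x_{n_{i+1}})f_i(x_{n_{i+1}})/f_{n_{i+1}}(x_{n_{i+1}})\big]=\int_0^1 h_if_i=\E[h_i(x_i)]$, whatever $f_{n_{i+1}}$ happens to be; the permutation cocycle has split coordinate-by-coordinate and the ``wrong'' densities cancel.

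To exploit this, approximate $\mathbf 1_A$ in $L^1(\mu)$ by a cylinder $\mathbf 1_{A_N}$ depending only on coordinates $<N$ (with $N\ge k$). Because $m\le f_n\le M$, the resulting error is at most $(M/m)^{2k}\,\mu(A\triangle A_N)\prod_i\|h_i\|_\infty$, which for \emph{fixed} $k$ tends to $0$ as $N\to\infty$ \emph{uniformly} in $n_1,\dots,n_k$. Once $n_1,\dots,n_k>N$ the factors above live on the disjoint coordinate blocks $\{0,\dots,N-1\},\{n_1\},\dots,\{n_k\}$, so the expectation factorizes and, using the exact computation just made,
\[
\E\Big[\mathbf 1_A\prod_{i<k}h_i(x_i)\Big]=\Big(\int_{[0,1]^k}\Psi_N(u)\prod_{i<k}f_{n_{i+1}}(u_i)\,du\Big)\prod_{i<k}\E[h_i(x_i)]+o_N(1),
\]
with $\Psi_N:=\E[\mathbf 1_{A_N}\mid x_0,\dots,x_{k-1}]\in[0,1]$ and $o_N(1)$ uniform in the $n_i$. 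The residual densities $f_{n_{i+1}}$ are disposed of by compactness: the family $\{f_n\}$ is uniformly integrable (it is bounded by $M$), so by the Dunford--Pettis theorem some subsequence converges weakly in $L^1[0,1]$ to a density $f_\infty$; sending $n_1,\dots,n_k\to\infty$ along this subsequence, one index at a time with dominated convergence, replaces each $f_{n_{i+1}}$ by $f_\infty$, after which $N\to\infty$ gives $\Psi_N\to\E[\mathbf 1_A\mid x_0,\dots,x_{k-1}]$ in $L^1$. One is left with $\E[\mathbf 1_A\prod_i h_i(x_i)]=C\prod_i\E[h_i(x_i)]$ for a constant $C$; specializing $h_i\equiv1$ forces $C=\mu(A)$, which is exactly the identity wanted, and the proof concludes that $A$ is independent of $\mathcal B$.

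The main obstacle is precisely this loss of measure-invariance of the permutations: the naive route --- approximate $A$ by a cylinder on $k$ coordinates and shift that cylinder wholesale --- fails, because a finite permutation distorts the $\mu$-measure of a $k$-coordinate cylinder by a factor $(M/m)^{2k}$, and in that approach $k\to\infty$. The way around it is to keep $k$ fixed and push the approximation into the \emph{positions} $n_i$ (and the cylinder level $N$) instead; the transposition cocycle then factors coordinate-wise, the ``far'' pieces cancel the stray densities exactly, and only a mild weak-$L^1$ compactness argument is needed for the ``near'' pieces, whose limiting contribution is then pinned to $\mu(A)$ by testing against constants. A secondary subtlety is that the $f_n$ need not converge, which is why one passes to a weakly convergent subsequence and only identifies the constant at the very end.
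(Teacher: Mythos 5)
Your argument is correct, but it takes a genuinely different route from the paper. The paper does not prove triviality of the exchangeable structure from scratch: it invokes the Aldous--Pitman criterion (Theorem \ref{AP}, ``tameness'') and verifies tameness with respect to Lebesgue measure in two lines, using exactly the same hypothesis $0<m\le f_n\le M<\infty$ that you use. You instead run a self-contained Hewitt--Savage-style argument adapted to the nonsingular setting: the permutation cocycle of a transposition is a finite product pinched between $(m/M)^{2k}$ and $(M/m)^{2k}$, the ``far'' factors $h_i(x_{n_{i+1}})f_i(x_{n_{i+1}})/f_{n_{i+1}}(x_{n_{i+1}})$ integrate exactly to $\int h_i f_i$ whatever $f_{n_{i+1}}$ is, and the cylinder approximation error is uniform in the transplanted positions because $k$ is held fixed --- this is the right fix for the loss of measure invariance, and I checked the change-of-variables identity and the factorization; they are correct. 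What each approach buys: the paper's proof is much shorter and rests on a criterion (tameness) that is strictly weaker than two-sided uniform density bounds, so it generalizes more readily (indeed the paper reuses it verbatim for the group case and for the type-$\mathrm{III}_1$ examples); yours is elementary and self-contained and exposes the mechanism. Two small remarks on your write-up: the Dunford--Pettis/weak-$L^1$ detour is avoidable --- testing the same identity with $h_i\equiv 1$ at finite $N$ already pins the bracket $\int\Psi_N\prod_i f_{n_{i+1}}$ to within $(M/m)^{2k}\mu(A\triangle A_N)$ of $\mu(A)$, uniformly in the $n_i$, after which $N\to\infty$ finishes the proof with no limit density $f_\infty$ at all; and in the step where you replace $f_{n_{i+1}}$ by $f_\infty$ one at a time, the justification is weak $L^1$ convergence against the bounded test function obtained by integrating out the other slots (done sequentially, since that test function depends on the remaining indices), not dominated convergence --- a harmless imprecision, not a gap.
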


  In our proof of Lemma \ref{exch} we will verify a condition that Aldous and Pitman \cite[Condition (c)]{aldo-pit} refer to as \emph{tameness}.  Let $u \wedge v$ denote the minimum of two real numbers $u,v \in \R$.

  \begin{theorem}[Aldous and Pitman]
  	\label{AP}
  	Let $(A, \mathcal{A})$ be a Borel measurable space endowed with a sequence of probability measures $(\mu_n)_{n\in \N}$.  Set 
  	$$	\mathcal{A}_0:=  \bigcap_{n \in \N}\left\{C \in \mathcal{A}:  \mu_n(C)\in \{0,1\}\right\}.$$
  	Suppose there exists a probability measure $\nu$ on $(A, \mathcal{A})$ such that for every $B \notin \mathcal{A}_0$ there exists  $\delta >0$ such that
  	$$\sum_{n \in \N} r_n(B, \delta)  = \infty,$$
  	where  
  	\begin{equation}
  		\label{RN} 	 	
  		r_n(B,\delta):=\inf\left\{\mu_n(B\setminus C) \wedge \mu_n\left(B^c\setminus C\right):\ \nu(C) \leq \delta \right\}.
  	\end{equation}
  	Then the $\fix$-action on the product space $(A^{\N}, \bigotimes_{n \in \N} \mathcal{A}, \bigotimes_{n \in \N} \mu_n)$ is ergodic. 
  \end{theorem}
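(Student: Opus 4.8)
\textit{The plan is to} deduce ergodicity of the $\fix$-action from a Hewitt--Savage-type zero-one law for the exchangeable $\sigma$-field, with the tameness hypothesis supplying the quantitative input that, in the i.i.d.\ case, is provided for free by exact permutation symmetry. First I would reduce the statement to the following: every $\fix$-invariant set $E\in\bigotimes_{n}\mathcal A$ satisfies $\mu(E)\in\{0,1\}$, where $\mu=\bigotimes_{n}\mu_n$. Applying the martingale convergence theorem to $\E[\mathbf 1_E\mid\mathcal F_{[1,N]}]$, where $\mathcal F_{[1,N]}=\sigma(x_1,\dots,x_N)$, exhibits $\mathbf 1_E$ as an $L^1(\mu)$-limit of cylinder indicators; it therefore suffices to prove that $E$ is $\mu$-independent of every finite cylinder. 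Since the cylinders form a $\pi$-system generating $\bigotimes_n\mathcal A$, independence from all of them gives independence of $E$ from the whole product $\sigma$-field, in particular from itself, forcing $\mu(E)=\mu(E)^2$ and hence $\mu(E)\in\{0,1\}$.

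The natural route to such independence is classical symmetrization: if $F$ depends only on coordinates $\{1,\dots,k\}$, one would move an approximate description of $E$ onto a block of far-away coordinates by a permutation $\sigma\in\fix$ and invoke invariance $\sigma^{-1}E=E$ to decouple $E$ from $F$ across disjoint coordinate blocks. \textit{The hard part, and the whole reason the hypothesis is needed,} is that $\mu$ is not $\fix$-invariant: transporting a set by $\sigma$ distorts its measure by a reindexed product in which the factor laws are permuted by $\sigma$, and when the $\mu_n$ differ markedly (for instance if $\mu_n$ concentrates as $n\to\infty$) this distortion does not vanish, so a rigid transport fails. This is exactly what the reference measure $\nu$ and the quantities $r_n(B,\delta)$ are engineered to repair.

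Reading $r_n(B,\delta)$ as the mass that $\mu_n$ robustly places on \emph{both} $B$ and $B^c$ after deleting a $\nu$-small exceptional set $C$, the divergence $\sum_n r_n(B,\delta)=\infty$ is a second Borel--Cantelli guarantee that every non-degenerate alphabet event $B\notin\mathcal A_0$ is ``refreshed'' by infinitely many coordinates carrying infinite total mass. I would exploit this through a coupling rather than a transport. Assuming $0<\mu(E)<1$, one first extracts an alphabet event $B\notin\mathcal A_0$ on which $E$ genuinely depends: if $E$ were measurable with respect to the $\sigma$-field generated only by sets in $\mathcal A_0$, it would be $\mu$-trivial, since each factor assigns such a set measure in $\{0,1\}$. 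Then, along the coordinates contributing to $\sum_n r_n(B,\delta)$, I would couple two copies of the sequence that agree off these coordinates but are independently split between $B$ and $B^c$ there; invariance of $E$ under the finite permutations interchanging these coordinates, combined with the divergence of the refresh-mass, drives the conditional law of $\mathbf 1_E$ to equidistribute and yields $\mu(E\cap F)=\mu(E)\mu(F)$ in the limit, contradicting $0<\mu(E)<1$.

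\textit{The main obstacle} is making this coupling quantitative: one must show that the successive single-coordinate refreshes, each contributing at least $r_n(B,\delta)$ of total-variation mixing, compound across the divergent series so as to annihilate the dependence of $E$ on any fixed initial cylinder, even though the coordinate laws never become identically distributed. This compounding estimate is the technical heart of the Aldous--Pitman analysis, and I would follow their bounds to carry it out.
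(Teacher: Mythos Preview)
The paper does not prove this theorem at all: it is stated as a result of Aldous and Pitman and is immediately followed by the remark ``Theorem~\ref{AP} follows from \cite[Theorem 1.12]{aldo-pit},'' after which the paper simply uses it as a black box. So there is no in-paper proof to compare your proposal against; you are sketching what the cited reference does, whereas the authors deliberately outsource it.

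As for the sketch itself, the reduction to a zero--one law for $\fix$-invariant sets and the martingale approximation by cylinders are exactly right, and you have correctly identified why the classical Hewitt--Savage transport breaks down and why the $r_n(B,\delta)$ quantities are the right currency. One point of caution: your coupling paragraph slightly conflates two mechanisms. Interchanging two coordinates via $\sigma\in\fix$ swaps their \emph{values}, so a pair $(x_i,x_j)\in B\times B^c$ becomes $(x_j,x_i)\in B^c\times B$; it does not resample them independently in $B$ and $B^c$. The $\fix$-invariance of $E$ therefore does not by itself justify the ``independently split'' coupling you describe. The actual Aldous--Pitman argument works instead by comparing $\mu$ with the permuted product $\bigotimes_n\mu_{\sigma(n)}$ and controlling the total-variation (or Hellinger) cost of the permutation using the tameness sums; the divergence $\sum_n r_n(B,\delta)=\infty$ is what lets one find enough ``good'' far-away coordinates so that this cost stays bounded while the cylinder dependence is pushed to infinity. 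Since you end by deferring the compounding estimate to their paper anyway, this is a minor slip in a heuristic paragraph rather than a fatal gap, but you should rephrase the mechanism if you want the sketch to stand on its own.
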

  \begin{remark}
  	\label{Rtame}
  	Theorem \ref{AP} follows from \cite[Theorem 1.12]{aldo-pit}.   
  	A sequence of probability measures satisfying the conditions of Theorem \ref{AP} is said to be \dff{$\nu$-tame}. \erk
  \end{remark}
  Recall that $\les$ denotes the usual Lebesgue measure on $\R$.
  
  \begin{proof}[Proof of Lemma \ref{exch}]
  	Let $c \in (0,1)$ be such that for Lebesgue-almost all $u\in [0,1]$ for all $n\in\mathbb{Z}$, we have $c<f_n(u)<c^{-1}$.  For all Borel sets $B \subset [0,1]$, set  
  	$$ \mu_n(B) =  \int_B f_n(u) du = \int_B f_n(u) d\les(u),$$
  	so that 	
  	\[
  	c\les(B)\leq \mu_n(B)\leq c^{-1}\les(B).
  	\] 
  	
  	It is easy to check that for all $B\in\mathcal{B}$ with $0<\les(B)<1$ if $$\delta(B)= \delta:= (\les(B) \wedge\les(B^c)) /2,$$ then for all $n\in\N$, we have $r_n(B,\delta) \geq c \delta,$ where $r_n$ is defined as in \eqref{RN}.   
  	Thus tameness is verified with Lebesgue measure and by Theorem \ref{AP} and Remark \ref{Rtame}, we obtain the desired ergodicity. 
  \end{proof}

 \begin{theorem} 
 	\label{perm-type}
 	Let $\lambda  \in (0,1)$ and  $(f_n)_{n\in \N}$ be the sequence of densities defined  in Section \ref{def}. Then the system  $\left([0,1]^{\N},\mathcal{B}_{+},\bigotimes_{n\in\N}f_n,\fix\right)$ is of type-$\mathrm{III}_\lambda$. 
 \end{theorem}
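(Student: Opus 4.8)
The plan is to verify the three facts that pin down the Krieger type of an ergodic conservative nonsingular action: containment of the ratio set, ergodicity, and production of an essential value. Nonsingularity of the $\fix$-action is immediate, since each $f_n$ takes values in $\{\lambda,1,\lambda^{-1}\}$, so every $T_\sigma$ with $\sigma\in\fix$ has Radon--Nikodym cocycle $\frac{d\mu\circ T_\sigma}{d\mu}(x)=\prod_{j\in\N}\frac{f_{\sigma^{-1}(j)}(x_j)}{f_j(x_j)}$, a finite product of ratios each lying in $\{\lambda^{-2},\lambda^{-1},1,\lambda,\lambda^2\}$; in particular this cocycle is $\{\lambda^k:k\in\Z\}$-valued and bounded, so $e(T)\subseteq\{\lambda^k:k\in\Z\}$. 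Ergodicity of the $\fix$-action is exactly Lemma~\ref{exch}, the $f_n$ being bounded away from $0$ and $\infty$, and the action is conservative (e.g.\ any cylinder is fixed by a transposition of two coordinates beyond its support). Since $e(T)$ is a multiplicative group, it then suffices to show $\lambda\in e(T)$: this forces $\{\lambda^k:k\in\Z\}\subseteq e(T)$, hence $e(T)=\{\lambda^k:k\in\Z\}$, i.e.\ type-$\mathrm{III}_\lambda$.

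To obtain $\lambda\in e(T)$ I would invoke the Approximation Lemma~\ref{lem: CHP}, with $\mathcal{G}$ the countable semi-ring of cylinders on finitely many coordinates with rational endpoints, $r=\lambda$, and $\delta=\lambda^2$. Fix $A\in\mathcal{G}$, depending only on coordinates $1,\dots,k$. Since $\mu_n(B_n)=\int_{B_n}f_n=\lambda^{-1}|B_n|=a_n$ and $\sum_n a_n=\infty$ by \eqref{rate}, the second Borel--Cantelli lemma gives, for $\mu$-a.e.\ $x$, infinitely many coordinates $n>k$ (necessarily $n\ge2$) with $x_n\in B_n$; let $M=M(x)$ be the least such, and let $N=N(x)$ be the least coordinate $n>M$ with $x_n\in B_n$ and $x_M\notin B_n$, which is again a.e.\ well defined because $\bigcap_n B_n$ is null (nested intervals of length $\lambda a_n\to0$). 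On $B:=A$, off this null set, define the partial transformation $V(x):=T_{(M(x)\,N(x))}x$, transposing coordinates $M(x)$ and $N(x)$; then $(x,V(x))\in\mathcal{O}_T$, and $V(x)\in A$ since $A$ constrains only coordinates $\le k<M(x)<N(x)$. The cocycle of $V$ is identically $\lambda$: in $\prod_j f_{\sigma^{-1}(j)}(x_j)/f_j(x_j)$ only the factors at $j=M$ and $j=N$ differ from $1$; at $j=M$ one has $f_M(x_M)=\lambda^{-1}$ (as $x_M\in B_M$) and $f_N(x_M)=1$ (as $x_M\notin B_N$ and $x_M\notin A_N$, since $B_M\cap A_N\subseteq B_2\cap A_2=\emptyset$), giving the factor $\lambda$; at $j=N$ one has $f_N(x_N)=\lambda^{-1}=f_M(x_N)$ (as $x_N\in B_N\subseteq B_M$), giving the factor $1$. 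Hence $\frac{d\mu\circ V}{d\mu}\equiv\lambda$ on $B$, so the $\epsilon$-condition of Lemma~\ref{lem: CHP} holds trivially; and, provided $V$ is injective, $\mu(V(B))\ge\lambda\,\mu(B)=\lambda\,\mu(A)>\delta\,\mu(A)$. Lemma~\ref{lem: CHP} then yields $\lambda\in e(T)$, completing the proof.

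The point requiring genuine work — and what I expect to be the main obstacle — is the injectivity of this adaptive transposition $V$ on $B$, equivalently that the pair $(M(x),N(x))$ is recoverable from $y=V(x)$. The coordinate $M(x)$ is recovered cleanly as the least $m>k$ with $y_m\in B_m$: for $m<M(x)$ one has $y_m=x_m\notin B_m$ by minimality of $M(x)$, while $y_{M(x)}=x_{N(x)}\in B_{N(x)}\subseteq B_{M(x)}$, so $M(x)$ qualifies and is least. Recovering $N(x)$ (and hence $x=T_{(M(x)\,N(x))}y$) then has to exploit that $y_{N(x)}=x_{M(x)}\in B_{M(x)}\setminus B_{N(x)}$ together with the minimality built into the definition of $N(x)$; making this fully rigorous may require recording a few further conditions in the definition of $N(x)$ that are visible in $V(x)$, while keeping those conditions $\mu$-a.e.\ satisfiable. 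The delicate feature is precisely this last constraint: one cannot use a conveniently rigid event such as $x_n\in B_n\setminus B_{n+1}$ to make $N(x)$ easy to read off, since $\sum_n\mu_n(B_n\setminus B_{n+1})=\sum_n(a_n-a_{n+1})<\infty$ by \eqref{rate} forces that event to occur only finitely often, whereas $\mu$-a.e.\ definedness of $M(x)$ and $N(x)$ relies essentially on $\sum_n a_n=\infty$.
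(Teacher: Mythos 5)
Your overall reduction --- Radon--Nikodym values of the $\fix$-action lying in $\ns{\lambda^k:k\in\Z}$, ergodicity from Lemma~\ref{exch}, and then exhibiting $\lambda$ as an essential value through Lemma~\ref{lem: CHP} via transpositions whose derivative is exactly $\lambda$ --- is the same as the paper's, and your cocycle computation for a single transposition $\sigma_{M,N}$ is correct. But the gap you flag is not a technicality: the map $V(x)=\sigma_{M(x),N(x)}(x)$ is genuinely non-injective on a set of positive measure, and no scheme for ``reading $N(x)$ off $V(x)$'' can repair it without changing the construction, because the identity of the swapped coordinate is simply not encoded in the image. Concretely, fix $k<M<N<N'$ (all $\geq 2$) and let $y$ satisfy $y_M\in B_{N'}$, $y_N\in B_M\setminus B_N$, $y_{N'}\in B_M\setminus B_{N'}$, $y_m\notin B_m$ for every other $m$ with $k<m<N'$, and with the coordinates $\leq k$ lying in the intervals defining $A$. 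Then both $x:=\sigma_{M,N}(y)$ and $x':=\sigma_{M,N'}(y)$ lie in your domain with $M(x)=M(x')=M$, $N(x)=N$, $N(x')=N'$, and $V(x)=V(x')=y$; such $y$ form a set of positive measure. Since Lemma~\ref{lem: CHP} requires $V\in[[T]]$, i.e.\ an injective partial transformation, and your estimate $\mu(V(B))\geq\lambda\,\mu(B)$ also uses injectivity, the argument as it stands does not go through.

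The paper avoids exactly this by not choosing the partner adaptively. It pairs coordinate $j$ with $2^j$ once and for all, sets $Y_j=\pm1$ on the events $(X_j,X_{2^j})\in C_j\times A_j$ and $(X_j,X_{2^j})\in A_j\times C_j$ (events of non-summable probability, comparable to $a_j$), and the partial transformation swaps all pairs $(j,2^j)$ with $Y_j\neq0$ up to the stopping time $\tau$ at which $\sum_j Y_j$ first reaches $1$; the derivative is then $\lambda^{\sum Y_j}=\lambda$, and injectivity is precisely where the antisymmetry $Y_j(Vx)=-Y_j(x)$ is used to recover $\tau$ and the swapped indices from the image. Note also that the paper does not take $B=A$: it takes $B=\mathbf{E}\cap\mathbf{I}$ with $\mu(\mathbf{E})>1/2$ and $\mathbf{E}$ independent of the cylinder $\mathbf{I}$, using $\delta=1/2$ in Lemma~\ref{lem: CHP}. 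If you wish to keep a transposition-based scheme, the defining event must be symmetric under the swap except for a detectable sign (such as ``$A$-coordinate versus $C$-coordinate'' within a fixed pairing), rather than an occupation pattern of the nested sets $B_n$ --- precisely because, as you yourself observe, the rigid events that would make $N(x)$ readable from the image have summable probabilities.
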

 
 \begin{remark}
 	\label{form-perm}
 	It is straightforward to verify that for every $\sigma \in \fix$, if $\mu=\bigotimes_{n\in\N}f_n$ then for $\mu$-almost every $x \in [0,1]^{\N}$, we have 
 	\begin{equation*}
 		\frac{d\mu\circ \sigma}{d\mu}(x)=\prod_{n\in\N}\frac{f_{\sigma(n)}\left(x_n\right)}{f_n\left(x_n\right)},
 	\end{equation*}
 	where  this product has only a finite number of non-unit terms, since $\sigma$ fixes all but a finite number of integers.   For example, if $\sigma_{a,b}$ is the transposition of $a,b\in\mathbb{Z}$, then
 	\begin{equation}
 		\label{transposition}
 		\frac{d\mu\circ \sigma_{a,b}}{d\mu}(x)=\frac{f_a\left(x_b\right)f_b\left(x_a\right)}{f_a\left(x_a\right)f_b\left(x_b\right)}.
 	\end{equation} \erk
 \end{remark}

 We will use Lemma \ref{lem: CHP} to prove Theorem \ref{perm-type} by defining partial transformations that will be given by a finite number of disjoint transpositions so that an explicit computation can be performed via \eqref{transposition}. 
 Let $N >0$ be an integer.  For  a finite number of intervals $I_{0},...,I_{N} \subseteq [0,1]$, we say that the subset of $\Omega_{+}= [0,1]^{\N}$ given by  
 \[
 \ns{x\in \Omega_{+}: x_k\in I_k \text{ for every integer }   k \in [0,N] }
 \]
 is a \dff{cylinder set specified up to $N$};  if all the intervals $I_{0}, \ldots, I_N$ have rational endpoints, we say that it is \dff{rational}. The collection $\mathcal{G}$ of all rational cylinders is a countable semi-ring, and  the ring generated by $\mathcal{G}$ is dense in $\mathcal{B}_{+}$ the product sigma-algebra for $\Omega_{+}$.   
 
 \begin{proof}[Proof of Theorem \ref{perm-type}]
 	
 	We already have ergodicity from Theorem \ref{exch} and nonsingularity follows from Remark \ref{form-perm}.  
 	
 	Since for all $\sigma\in  \fix$, we have that 
 	$$\log\left(\frac{d\mu\circ \sigma}{d\mu}\right)\in (\log\lambda)\mathbb{Z} = \ns{ n\log(\lambda): n \in \Z},$$  
 	it is suffices to show that $\log\lambda$ is an essential value; we will  accomplish this  by verifying the conditions of Lemma \ref{lem: CHP} with $\mathcal{G}$ the rational cylinders and $\delta=1/2$.

 	Let   $\mathbf{I} \in\mathcal{G}$ be a cylinder set specified up to $N$.   Recall the sets $A_n, B_n \subset [0,1]$ that were used to define $f_n$ in Section \ref{def}.   Set  $$C_n:=[0,1]\setminus\left(A_n\cup B_n\right).$$  In what follows it will be convenient to use the language of random variables.  We endow measurable  space $(\Omega_{+}, \borel_{+})$  with the probability $\P = \mu$ and the expectation
 	$$ \E Y = \int Y(x) d\P(x)$$
 	for a random variable $Y: \Omega \to \R$.
 	Thus with $X: \Omega \to \Omega$ as  the identity $X(x) = x$ the sequence $X=(X_n)_{n \in \Z}$ is a sequence of continuous independent random variables with density functions given by $(f_n)_{n \in \N}$.    
 	
 	For $n \geq N+1$ and $n \not \in \ns{2^k: k \in \N}$, let  $Y_n:\Omega\to \{-1,0,1\}$ be defined by
 	\begin{align*}
 		Y_n:&=\ind_{C_n}\left(X_n\right)\ind_{A_n}\left(X_{2^n}\right)-\ind_{A_n}\left(X_n\right)\ind_{C_n}\left(X_{2^n}\right)\\
 		&=\ind[(X_n,X_{2^n}) \in C_n\times A_n]-\ind[(X_n,X_{2^n})\in A_n\times C_n].
 	\end{align*}
 	We also set $Y_{2^k} \equiv 0$ for all $k \in \N$.
 	We claim that 
 	\begin{equation}
 		\label{claim:infinite}
 		\lim_{M\to\infty}\sum_{k=N+1}^M Y_k=\infty \quad  \text{in probability.}
 	\end{equation} 
 	
 	Since $C_n\subset C_{2^n}$ and $A_{2^n}\subset A_n$, we have $$\int_{C_n}f_{2^n}(u)du=\left|C_n\right|$$ and 
 	\[
 	\int_{A_n}f_{2^n}(u)du=\int_{A_n\setminus A_{2^n}}1du+\lambda\left|A_{2^n}\right|.
 	\]
 	Since $X$ is an independent sequence, we have
 
 \begin{align*}
 	\mathbb{E}Y_n&=\int_{C_n} f_n(u)du\cdot \int_{A_n}f_{2^n}(u)du-\int_{A_n} f_n(u)du\cdot \int_{C_n}f_{2^n}(u)du\\
 	&=(1-\lambda)\left|C_n\right|\left(\left|A_n\right|-\left|A_{2^n}\right|\right)\\
 	&=\frac{1-\lambda}{(n+4)\log (n+4)}\left(1-\frac{\lambda+1}{(n+4)\log (n+4)}\right)+O\left(\frac{1}{2^nn}\right)
 \end{align*}
 for $n \not \in \ns{2^k: k \in \N}$.    Recall that  $Y_{2^k} \equiv 0$ for all $k \in \N$.
 	Thus $\sum_{k=N+1}^M\mathbb{E}Y_k\sim (1-\lambda)\log\log M$ as $M\to\infty$,  and since $0<\lambda<1$, we have $\lim_{M\to\infty}\sum_{k=N}^M\mathbb{E}Y_k=\infty$.  
 	
 	   Since for each $n \not \in \ns{2^k: k \in \N}$ the random variable $Y_n$ is a function of $(X_{n}, X_{2^n})$ the random variables $(Y_n)_{n \in \N}$ are independent,    a similar calculation gives that
 	\begin{align*}
 		\var\Big(\sum_{k=N+1} ^M Y_k\Big) &= \sum_{k=N+1} ^m \var(Y_k) \\
 		&=\sum_{k=N+1}^M\left(\mathbb{E}\left(Y_k^2\right)-\mathbb{E}\left(Y_k\right)^2\right)\\\
 		&= (\lambda+1+o(1))\log\log M. 
 	\end{align*} 
 	In particular, $$\var\left(\sum_{k=N+1}^MY_k\right)=o\left(\left(\mathbb{E}\left(\sum_{k=N+1}^MY_k\right)\right)^2\right) \text{ as } M\to\infty.$$  Hence Chebyshev's inequality gives, 
 	\begin{align*}
 		\P\left(\sum_{k=N+1}^MY_k<\sqrt{\log\log M} \right)
 		&= O\left(\frac{\var\left(\sum_{k=N+1}^MY_k\right)}{\left(\mathbb{E}\left(\sum_{k=N+1}^MY_k\right)\right)^2}\right) \ \text{as}\ M\to\infty. 
 	\end{align*}
 	Hence \eqref{claim:infinite} holds.

 	The divergence to infinity in \eqref{claim:infinite} and the fact that the random variables in the sum are $\{-1,0,1\}$ valued implies 
 	there  exists 
 	$M>N+1$ such that the set 
 	\[
 	\mathbf{E}:=\Big\{x\in\Omega_{+}: \text{there is an integer  $K \in [N+1,M]$ with } \sum_{j=N+1}^KY_j=1 \Big\},
 	\]
 	satisfies $\P(\mathbf{E})>\frac{1}{2}$.   Note that $\mathbf{E}$ depends on the random variables $(X_{N+1}, X_{2^{N+1}}), \ldots, (X_{M},X_{2^M})$.  
 	For $x\in\textbf{E}$ set 
 	\[\tau(x):=\min\Big\{n\geq N+1: \sum_{j=N+1}^n Y_j(x)=1\Big\}.
 	\]
 	
 	Now we  specify  a partial transformation $V$ with domain $\mathbf{D}:=\mathbf{E}\cap \mathbf{I}$ which satisfies the conditions of Lemma \ref{lem: CHP}.    	
 	Let $V:\mathbf{D}\to \mathbf{I}$ be defined by 
 	$$
 	(Vx)_j:=\begin{cases}
 	x_{2^j}, & j\in [N+1,\tau(x)] \text{ and }   Y_j(x)\neq 0,\\
 x_{\log_2 j}, & \log_2 j \in [N+1,\tau(x)]\cap \mathbb{N} \text{ and }   Y_{\log_2 j}(x)\neq 0,\\
 	x_j, & \text{otherwise}.
 	\end{cases}
 	$$
 	Thus $V$ is simply the application of the transposition  of 
 	certain dyadic pairs;  
 	$Vx$ is the sequence $x$, except that  $x_j$ is swapped with $x_{2^j}$ under certain conditions on $j$.

 	Since $\mathbf{I}$ is specified up to $N$,  the events  $\mathbf{E}$ and $\mathbf{I}$ depend on a disjoint collections of the coordinates $X =(X_n)_{n \in \N}$ and are thus independent. Therefore we have
 	\[
 	\P(\mathbf{D})=\P(\mathbf{E})\P(\mathbf{I})>\frac{1}{2}\P(\mathbf{I}),
 	\]
 	verifying the first condition of Lemma \ref{lem: CHP}. Secondly,  for $x\in\mathbf{D}$, we have
 	\begin{align*}
 		\frac{d\mu\circ V}{d\mu}(x)&=\prod_{k\in [N+1,\tau(x)],\ Y_k(x)\neq 0}\frac{d\mu\circ \sigma_{k, 2^k}}{d\mu}(x)\\
 		&=\prod_{k\in [N+1,\tau(x)],\ Y_k(x)\neq 0} \frac{f_k\left(x_{2^k}\right)}{f_k\left(x_k\right)}\frac{f_{2^k}\left(x_k\right)}{f_{2^k}\left(x_{2^k}\right)}\\
 		&=\prod_{k\in [N+1,\tau(x)],\ Y_k(x)\neq 0}\lambda^{Y_k(x)},
 	\end{align*}
 	where the 
 	last 
 	equality 
 	follows from the fact that if $Y_k(x)=1$, then $x_{2^k}\in A_k$ and $x_{k}\notin (A_k\cup B_k)$ so that  $f_k\left(x_{2^k}\right)=\lambda$ and $f_k\left(x_k\right)=1$ and similarly if $Y_k(x)=-1$, then $f_k(x_{2^k}) =1$ and $f_k(x_k)=\lambda$.  Hence 
 	\[
 	\frac{d\mu\circ V}{d\mu}(x)=\lambda^{\sum_{k=N+1}^{\tau(x)}Y_k(x)}=\lambda,
 	\]
 	and we have verified (an epsilon free version of) the second condition of the lemma.

 	Clearly $V$ is nonsingular and by construction $(x, Vx) \in \mathcal{O}_{\fix}$ for all $x \in \mathbf{D}$.    It remains to verify that $V$ is injective.   Let $x,x'\in\mathbf{D}$ be such that $Vx=Vx'$ and assume without loss of generality that $\tau(x)\leq \tau(x')$.  Since for all $N+1\leq k\leq \tau(x)$, we have
 	\[
 	Y_k\left(Vx\right)=-Y_k(x)
 	\] 
 	it follows that  for all $N+1\leq k\leq\tau(x)$, we have $Y_k(x)=Y_k(x')$. Hence 
 	\[
 	\sum_{k=N}^{\tau(x)}Y_k(x')=\sum_{k=N}^{\tau(x)}Y_k(x)=1
 	\]
 	so that the minimality of $\tau$ gives that  $\tau(x)=\tau(x')$. In addition, the subsets where $V$ fixes the coordinates of $x$ and $x'$ are the same, since 
 	$$\left\{k\in [N+1,\tau(x)]:\  Y_k(x)\neq 0\right\} = \left\{k\in [N+1,\tau(x)]:\  Y_k(x')\neq 0\right\}.$$ 
 For the other indices, by definition, $Vx$ and $Vx'$ are permutations of the coordinates of $x$ and $x'$, respectively.   Since $Vx = Vx'$ and  permutation are injective, it follows that $x=x'$ as desired. 
 \end{proof}

Let $\lambda \in (0,1)$.   Let $(\Omega_{+}, \borel_+, \m_{+}, S)$ be the one-sided Bernoulli shift from Section \ref{def}.
The \dff{tail} equivalence relation on $\Omega_+$ is given by
$$ \mathcal{T} := \big\{ (x,x' ) \in \Omega_+ \times \Omega_+:  \text{there exists $n \in \N$ with $S^nx = S^n x'$} \big\}.$$
The \dff{exchangeable} equivalence relation is given by
 $$\mathcal{E}:=\left\{(x,x')\in \Omega_+\times\Omega_+: \text{there exists } \sigma\in \fix \text{ with } \sigma (x)=x' \right\}.$$
\begin{remark}
\label{coarse}
 Note that $\mathcal{E}$ is countable and  coarser than $\mathcal{T}$. \erk
\end{remark}
  Let  $S_{\varphi_\mu}$ be its discrete Maharam extension.   Let $\psi: \Omega_+ \times \Omega_+ \to \Z$ be the \dff{tail cocycle} associated to $\varphi_\mu$, defined by
 \[
 \psi(x,x'):=\sum_{n=0}^\infty\left(\varphi_\mu\circ S^n(x)-\varphi_\mu\circ S^n(x')\right).
 \]
\begin{remark}
	\label{tail-cond}
 Note that for all $(x,z),(x',z')\in \Omega_+\times\bb{Z}$, there exists $n\in\mathbb{N}$ such that $S_{\varphi_\mu}^n(x,z)=S_{\varphi_\mu}^n(x',z')$ if and only if $(x,x')\in \mathcal{T}$ and 
 \[
 z+\psi(x,x')=z'. 
 \]
 \end{remark}
 For $\sigma\in \fix$, let $\psi_{\sigma}(x):=\psi(x,\sigma(x))$.   
The  \dff{discrete Maharam extension of the $\fix$-action}  on $\Omega_+\times \bb{Z}$ is given by
 \[
 \sigma(x,n):=\Big(\sigma(x),n-\log_\lambda\frac{d\mu\circ \sigma}{d\mu}\Big).
 \] 
 \begin{lemma}
 	\label{center}
 	    Let $\lambda \in (0,1)$.   Let $(\Omega_{+}, \borel_+, \m_{+}, S)$ be the one-sided Bernoulli shift from Section \ref{def}.
 	For every $\sigma\in\fix$, we have  $$\psi_\sigma=\log_\lambda\frac{d\mu\circ \sigma}{d\mu},$$
 	where $\psi$ is the associated tail cocycle.
 	  Furthermore,  if the Maharam extension of the $\fix$-action 
 	is ergodic with respect to $\widetilde{\mu_+}$, then 
 	$$\bigcap_{n=1}^\infty S_{\varphi_\mu}^{-n}\left(\mathcal{B}([0,1] ^{\N})\otimes\B(\Z)\right)=\{\emptyset,\Omega_+\times \bb{Z}\} \bmod \widetilde{\mu_+},$$
 	so that discrete Maharam extension of the left-shift is a $K$-auto\-morphism. 
 \end{lemma}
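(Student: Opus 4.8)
The plan is to prove the cocycle identity first and then use it to transfer ergodicity of the Maharam extension of the $\fix$-action into triviality of the relevant tail $\sigma$-field. For the identity $\psi_\sigma = \log_\lambda \frac{d\mu \circ \sigma}{d\mu}$: fix $\sigma \in \fix$ and let $m$ be so large that $\sigma$ fixes every integer $\geq m$. For any $x$, the sequences $x$ and $\sigma(x)$ agree on all coordinates indexed $\geq m$ (those coordinates are merely relabeled by the identity), but more usefully $S^m x$ and $S^m \sigma(x)$ are literally equal, since past coordinate $m$ the two points coincide. Hence only finitely many terms of the defining sum for $\psi(x,\sigma(x)) = \sum_{n \geq 0}(\varphi_\mu \circ S^n(x) - \varphi_\mu \circ S^n(\sigma(x)))$ are nonzero. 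One then computes this finite sum telescopically: writing $\varphi_\mu = \log_\lambda \prod_{k \in \N} f_{k-1}(x_k)/f_k(x_k)$, the partial sum $\sum_{n=0}^{m-1}\varphi_\mu(S^n x)$ collapses (the $f_{k-1}/f_k$ factors cancel in pairs across consecutive shifts) to $\log_\lambda \prod_{k \in \N} f_{k}(x_k)^{e_k}$ for appropriate exponents; carrying out the same collapse for $\sigma(x)$ and subtracting leaves exactly $\log_\lambda \prod_{n} f_{\sigma(n)}(x_n)/f_n(x_n) = \log_\lambda \frac{d\mu \circ \sigma}{d\mu}(x)$, using the formula in Remark \ref{form-perm}. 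I expect this to be a bookkeeping computation with no real obstacle, though care is needed to keep the shift indices straight.

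For the second assertion, I would argue that triviality of $\bigcap_n S_{\varphi_\mu}^{-n}(\borel([0,1]^\N) \otimes \borel(\Z))$ is exactly the ergodicity of the tail equivalence relation $\mathcal T$ twisted by the cocycle $\psi$, by Remark \ref{tail-cond}: a set in the intersection is precisely a set invariant under the "tail groupoid with $\Z$-extension $\psi$". Since $\mathcal E \subset \mathcal T$ (Remark \ref{coarse}), and since by the first part the $\fix$-action extension $\sigma(x,n) = (\sigma(x), n - \log_\lambda \frac{d\mu \circ \sigma}{d\mu}) = (\sigma(x), n - \psi_\sigma(x))$ is a subrelation of the tail-cocycle extension, ergodicity of the coarser $\fix$-action extension $\widetilde{\mu_+}$ forces ergodicity of the finer tail-cocycle extension — any invariant set for the larger relation is a fortiori invariant for the smaller one. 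Thus the intersection is trivial. The passage from "ergodic $\Z$-extension of the $\fix$-action" to the stated triviality of $\bigcap_n S_{\varphi_\mu}^{-n}(\cdots)$ should be spelled out: a bounded measurable function fixed by all $S_{\varphi_\mu}^{-n}$-preimages is, up to null sets, measurable with respect to the tail and invariant under adding $\psi$, hence (being $\mathcal E$-$\psi$-invariant) constant $\widetilde{\mu_+}$-a.e. by the assumed ergodicity.

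Finally, combining this with the already-established facts — the discrete Maharam extension $\tilde T$ of the two-sided shift is the natural extension of $S_{\varphi_\mu}$, $\widetilde{\mu_+}$ is $\sigma$-finite, $\tilde T$-invariant, and $T'$ (hence the Radon–Nikodym cocycle of $\tilde T$) is measurable — the four bullet-point conditions in the definition of a $K$-automorphism from Section \ref{conserve} are verified: the third bullet ($\bigvee_n T^n \mathcal G = \mathcal F$) holds because $\tilde T$ is a natural extension, the first holds by construction, the fourth by the explicit formula for the cocycle, and the second is exactly the triviality just proved. I anticipate the only genuinely delicate point is the identification in the middle paragraph of $\bigcap_n S_{\varphi_\mu}^{-n}(\borel \otimes \borel(\Z))$ with the invariant $\sigma$-field of the $\psi$-twisted tail relation, i.e., making precise that tail triviality "with the cocycle attached" is what the intersection computes; once that dictionary is in place, the monotonicity $\mathcal E \subset \mathcal T$ does the rest.
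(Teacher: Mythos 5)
Your proposal is correct and follows essentially the same route as the paper: reduce $\psi(x,\sigma x)$ to a finite sum using $S^m x = S^m\sigma x$ and identify the telescoped product with $\log_\lambda\frac{d\mu\circ\sigma}{d\mu}$, then use this to view the Maharam extension of the $\fix$-action as a subrelation of the $\psi$-twisted tail relation of $S_{\varphi_\mu}$, so that the tail sigma-field sits inside the $\fix$-invariant sigma-field, which is trivial by the assumed ergodicity. The only cosmetic point is that you claim the intersection $\bigcap_n S_{\varphi_\mu}^{-n}(\borel\otimes\borel(\Z))$ equals the invariant sigma-field of the twisted tail relation, whereas only the (easy) inclusion of the former in the latter is needed and is the one your argument actually uses, exactly as in the paper.
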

 
 \begin{proof}[Proof of Theorem \ref{duality-checked}]
 	Theorem \ref{perm-type} together with  Theorem \ref{duality} implies that the Maharam extension of the $\fix$-action is ergodic, thus the hypothesis of Lemma \ref{center} is satisfied and the discrete Maharam extension of the left-shift is a $K$-automorphism and by Lemma \ref{ST-K} it is ergodic; finally, by Theorem \ref{duality} this implies that the Bernoulli shift is of type-$\mathrm{III}_{\lambda}$.
 	\end{proof}

 \begin{proof}[Proof of Lemma \ref{center}]

 	Let $\sigma \in \fix$ and suppose that for all $m \geq n$, the integer $m$ is fixed by $\sigma$.    Then for all $m\geq n$ and $x\in\Omega_+$, we have $S^m\sigma(x)=S^m(x)$.  Thus for $\mu$-almost all $x \in \Omega_{+}$, we have

 	\begin{align*}
 		\psi(x,\sigma(x))&=\sum_{k=0}^{n-1}\left(\varphi_\mu\circ S^k(x)-\varphi_\mu\circ S^k(\sigma(x))\right)\\
 		&= \log\left(\prod_{k=0}^{n-1}\frac{d\mu\circ T}{d\mu}\left(T^kx\right)\right)-\log\left(\prod_{k=0}^{n-1}\frac{d\mu\circ T}{d\mu}\left(T^k\sigma(x)\right)\right)\\
 		&=\log\left(\left(T^n\right)'(x)\right)-\log\left(\left(T^n\right)'(\sigma(x))\right).
 	\end{align*}
 	Again, since $\sigma$ fixes all integers $m \geq n$ and  $f_{k-n}\equiv 1$ for all $k\leq n$, for $\mu$-almost all  $x\in\Omega_+$, we have
 	\begin{align*}
 		\frac{\left(T^n\right)'(x)}{\left(T^n\right)'(\sigma(x))}&=\prod_{k=1}^n\frac{f_{k-n}\left(x_k\right)}{f_{k}\left(x_k\right)}\frac{f_{k}\left(x_{\sigma(k)}\right)}{f_{k-n}\left(x_{\sigma(k)}\right)}\\
 		&=\prod_{k=1}^n\frac{f_{k}\left(x_{\sigma(k)}\right)}{f_{k}\left(x_k\right)}\\
 		&=\frac{d\mu\circ \sigma}{d\mu}(x).
 	\end{align*}
 The first claim is verified.  
 	
 The first claim gives that the equivalence relation $\mathcal{R}$ on $\Omega_+\times \bb{Z}$ given by
 	\[
 	 \big\{((x,z),(x',z')) :\ \exists \sigma\in\fix \text{  with }  \sigma(x)=x' \text{and}\ z'=z-\psi(x,\sigma(x)) \big\}
 	\]
 is in fact the tail equivalence relation for the Maharam extension of the $\fix$-action	given by
 $$  \big\{((x,z),(x',z')) :\ \exists \sigma \in \fix  \text{ such that } \sigma(x,z) = (x',z')\big\}.$$
 By Remarks \ref{coarse} and \ref{tail-cond}, $\mathcal{R}$ is countable and is coarser than 
 the  
 tail relation of $S_{\varphi_\mu}$, given by
 $$  \big\{((x,z),(x',z')) :\ \exists n \in \N  \text{ such that } S_{\varphi_\mu}^n(x,z) = (x',z')\big\}.$$
  Therefore, $\mathcal{I}$, the sigma-field of invariant sets for the equivalence relation $\mathcal{R}$ contains the tail sigma-field for $S_{\varphi_\mu}$.   Hence  the assumption that the  Maharam extension of the $\fix$-action is ergodic yields the desired triviality. 
 \end{proof}

 \begin{remark}
It was proved in \cite[Theorem 3.2]{KosKMaharam} and \cite{DaniLem} that   a conservative  nonsingular half stationary Bernoulli shift on two symbols without an absolutely continuous invariant probability measure has a  Maharam
 	extension that is a $K$-transformation.  
 	The idea of Kosloff's proof and extensions given in \cite{DaniLem} do not extend in an obvious way in the setting of Theorem 	\ref{duality-checked}, since the tail equivalence relation for $S$ is not countable and not nonsingular; see \cite[Section 2.3] {KosKMaharam} for more details.   \erk
 \end{remark}

 \section{Countable amenable groups}
 \label{a-group}
 
 \subsection{Introduction}
 Benjamin Weiss and Andrey Alpeev  asked whether for every countable amenable group and $0\leq\lambda\leq 1$ there exists an explicit action of $G$ which is type-$\mathrm{III}_\lambda$.   We give a positive (partial) answer to this question in Theorem \ref{group-exists} by producing type-$\mathrm{III}_\lambda$ Bernoulli action of $G$ for $\lambda \in (0,1)$.    The  case where $\lambda =1$ is known from \cite{BjoKosVaes,VaesWahl}.

 Let $G$ be a countable group.    We  will sometimes  write $G\curvearrowright\left(\Omega,\F,\mu\right)$ to denote a group action on the measured space $(\Omega, \F, \mu)$.      The group $G$ acts on $\Omega =[0,1]^G$ via $$(T_g x)_h = (g x)_h := x_{g^{-1}h}$$ for all $x \in \Omega$ and all $g, h \in G$.    We endow $\Omega$ with the usual Borel product sigma-algebra, $\borel$.  Let $(f_i)_{i\in G}$ be a collection of probability densities with domain $[0,1]$ and let $\mu = \bigotimes_{i \in G} f_i$.  Then we say that $G$ is a \dff{Bernoulli action} on $(\Omega, \borel, \mu)$.

 Recall that an action $G\curvearrowright\left(\Omega,\F,\mu\right)$ is of \dff{stable type-$\mathrm{III}_\lambda$} if for every ergodic probability preserving $G$ action on $\left(\Omega',\mathcal{C},\nu\right)$, the diagonal action $G\curvearrowright \left(\Omega\times \Omega',\F\otimes \mathcal{C},\mu\otimes\nu\right)$ is ergodic and of  type-$\mathrm{III}_\lambda$.

For a finite set $F$, let $\num{F}$ denote its cardinality.   Recall that a \dff{F{\o}lner sequence} for a countable group $G$ is a sequence $(F_n)_{n \in \N}$ of finite subsets of $G$ such that  for all $g \in G$, we have
$$ \lim_{n \to \infty} \frac{ \num{ F_n \triangle g F_n} } { \num{F_n} } =0.$$ 
 F{\o}lner \cite{MR79220} proved that 
  $G$ is amenable if and only it admits it
  a
    F{\o}lner sequence. 
Recall the statement of  Theorem \ref{group-exists}:  
 \begin{theorem-nonum}
  	Let $G$ be a countable amenable group and $\lambda\in(0,1)$. There exists a product measure $\bigotimes_{g\in G}f_g$ on $[0,1]^G$ such that the corresponding Bernoulli action is nonsingular, ergodic and of stable type-$\mathrm{III}_\lambda$. 
\end{theorem-nonum}
We will prove Theorem \ref{group-exists}  by adapting the densities defined in Section \ref{def} to the amenable group setting.  The main difference in this more general setting is that when we no longer have a notion of the $K$-property.

 \subsection{The explicit construction}
 \label{def-group}

 We will now proceed with the construction of the density functions on $[0,1]$. Let $\lambda\in (0,1)$ and $G=\left\{g_n\right\}_{n\in\bb{N}}$ be a countable amenable group which we enumerate. A straightforward application of F{\o}lner's characterization of amenability implies that  there exists a pairwise disjoint   F{\o}lner sequence $(F_n)_{n \in \N}$  satisfying the property that for all  integers $0 \leq k <n$, we have
 \begin{equation}\label{eq: Folner}
 	\max\left(\frac{ \num{F_n\triangle g_k^{-1}F_n}} {\num{ F_n}},\frac{\num{F_n\triangle g_kF_n}}{\num{F_n}} \right)<\frac{1}{n}, 
 \end{equation}
 and  that the union of F{\o}lner sets leaves an infinite subset of $G$, so that $G \setminus \bigcup_{n \in \N} F_n$ is infinite.   We will also assume that $\num{F_n} \geq 4$ for all $n \in \N$.

Recall that $\les$ denotes Lebesgue measure.  Let $(A_n)_{n \in \N}$ and  $(B_n)_{n \in \N}$ be a decreasing  sequences of open intervals of $[0,1]$ such that $A_1$ and $B_1$ are disjoint, that is,  any two of $A_m$ and $B_n$ are disjoint; furthermore, we specify that 
  $$\les\left(A_n\right)=\lambda^{-1}\les\left(B_n\right)=\frac{1}{n\log (n+1)}\frac{1}{ \num{F_n}}.$$   Since $\num{F_n}\geq 4$ it follows that $$\les\left(B_n\right)+\les\left(A_n\right)<1.$$

 Let $(\hat{f}_n)_{n \in \N}$ be a sequence of $\{\lambda^{-1},1,\lambda\}$-valued densities on $[0,1]$ given   by
 $$
 \hat{f}_n(u):=\begin{cases}
 \lambda, &u\in A_n,\\
 \lambda^{-1},\ &u\in B_n,\\
 1, &  u\in [0,1]\setminus \left(A_n\cup B_n\right),
 \end{cases}
 $$
 and $(f_g)_{g\in G}$  be the sequence of functions  indexed by $G$ given by
 \begin{equation}
 	\label{def of f_g}
 	f_g:=\begin{cases}
 		\hat{f}_n, & \text{if there exists $n \in \N$ with} \ g\in F_n,\\
 		1_{[0,1]}, & \text{otherwise}.
 	\end{cases}
 \end{equation}
 
 We will show that the Bernoulli action $G\curvearrowright\left(\Omega,\B,\mu\right)$ witnesses Theorem \ref{group-exists}, where $\Omega = [0,1]^G$ and $\mu = \bigotimes_{g \in G} f_g$.
 
 \subsection{The proof of Theorem \ref{group-exists} }
 
Our approach to the proof of Theorem \ref{group-exists} is similar to the proof of Theorem \ref{main-exists} and in what follows we display the main components, and highlight the differences with regards to our treatment of Theorem \ref{main-exists}, and  postpone the proofs of the components parts until Section \ref{remain}.

  \begin{lemma}
  	\label{group-con}
  Let $\lambda \in (0,1)$.  	The Bernoulli action defined in Section \ref{def-group} is nonsingular and conservative.
  \end{lemma}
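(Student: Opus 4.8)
The plan is to mimic the proof of nonsingularity (Lemma \ref{nonsing}) and conservativity (Lemma \ref{cons}) from the integer case, carefully tracking where the Følner structure replaces the linear structure of $\Z$. Throughout, write $\mu = \bigotimes_{g \in G} f_g$ with the $f_g$ as in Section \ref{def-group}, and recall that for $g \in G$ the Radon--Nikodym cocycle is the product $\frac{d\mu\circ T_g}{d\mu}(x) = \prod_{h \in G} \frac{f_{g^{-1}h}(x_h)}{f_h(x_h)}$.

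First I would establish nonsingularity via Kakutani's theorem. For each $g \in G$, $\mu \circ T_g = \bigotimes_{h \in G} f_{g^{-1}h}$ is again a product measure, so by Kakutani's dichotomy it suffices to show that $\sum_{h \in G} \int_0^1 \big(\sqrt{f_{g^{-1}h}(u)} - \sqrt{f_h(u)}\big)^2\,du < \infty$ for every fixed $g$. The integrand vanishes unless $h$ and $g^{-1}h$ lie in different ``levels'', i.e.\ unless exactly one of them lies in some $F_n$ or they lie in distinct $F_n$'s; because the $F_n$ are pairwise disjoint and the $A_n, B_n$ are nested, each such term is controlled by $\les(A_n)$ or $\les(A_{n-1})$-type quantities. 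The key input is that $\sum_n \les(A_n) \cdot \num{F_n} = \sum_n \frac{1}{n\log(n+1)}$-type control is not quite what we want; rather, the relevant sum is $\sum_n \num{F_n \triangle g^{-1}F_n}\,\les(A_n) \le \sum_n \frac{1}{n}\num{F_n}\les(A_n) = \sum_n \frac{1}{n}\cdot\frac{1}{n\log(n+1)} < \infty$ by the Følner estimate \eqref{eq: Folner}. (One also needs to handle the finitely many $n \le $ (index of $g$) where \eqref{eq: Folner} does not apply, but those contribute only finitely many finite terms.) This gives nonsingularity.

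For conservativity I would again use the Hopf criterion: it is enough to show that for $\mu$-almost every $x$, $\sum_{g \in G} \frac{d\mu\circ T_g}{d\mu}(x) = \infty$, and by restricting the sum to $g \in \bigcup_n F_n$ it suffices to show $\sum_n \sum_{g \in F_n} \frac{d\mu\circ T_g}{d\mu}(x) = \infty$ a.s. Following Lemma \ref{lem: VW} and Lemma \ref{cons}, I would estimate the second moment $\int_\Omega \big(\frac{d\mu}{d\mu\circ T_g}\big)^2 d\mu$ for $g \in F_n$: writing out the product over $h$ and using that only coordinates in $F_n \triangle g^{-1}F_n$ and a few lower levels contribute a non-unit factor, one obtains a bound of the form $\exp\big(c(\lambda)\sum_{k \le n} \num{F_k}\les(A_k)\big) = \exp\big(c(\lambda)\sum_{k\le n}\frac{1}{k\log(k+1)}\big) = O\big((\log n)^{c(\lambda)}\big)$. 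Then $\int_\Omega \big(\sum_{g\in F_n}\frac{d\mu}{d\mu\circ T_g}\big)^2 d\mu \le \num{F_n}^2 \cdot O((\log n)^{c(\lambda)})$ by Cauchy--Schwarz, and a Markov/Borel--Cantelli argument along a suitably chosen sub-sequence (using that $\num{F_n} \to \infty$ and the divergence of $\sum 1/(n\log n)$-type series) forces $\sum_n \sum_{g\in F_n}\frac{d\mu\circ T_g}{d\mu}(x)$ to diverge a.s. The main obstacle, and the place requiring the most care, is the bookkeeping in the second-moment estimate: unlike the $\Z$ case where $T^n$ shifts by a single amount, here $T_g$ for different $g \in F_n$ overlap $F_n$ with itself in $g$-dependent ways, so one must argue that the non-unit factors are still confined to $F_n \triangle g^{-1}F_n$ together with finitely many exceptional lower levels, and that the resulting constant is uniform over $g \in F_n$. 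Ergodicity is \emph{not} claimed here (it is deferred, since the $K$-property is unavailable), so the lemma stops at nonsingular and conservative, exactly as stated.
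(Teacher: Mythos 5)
Your nonsingularity argument is essentially the paper's: fix $g$, apply Kakutani, note that non-unit terms are confined to the sets where $f_{g^{-1}h}\neq f_h$ (the paper uses $H_k(g)=F_k\triangle(gF_k\cup g^{-1}F_k)$), bound each term by $\lambda^{-1}(\les(A_k)+\les(B_k))$, and use \eqref{eq: Folner} for the levels above the index of $g$ while treating the finitely many lower levels crudely. That part is fine, although the paper also records the quantitative outcome \eqref{eq: Amenable conser}, namely a bound of order $\log\log(n(g)+1)$ where $n(g)$ is the \emph{enumeration} index of $g$, because this is exactly what drives the conservativity proof.

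For conservativity there is a genuine gap. Your key estimate --- that for $g\in F_n$ the non-unit factors of $(T_g)'$ are confined to $F_n\triangle g^{-1}F_n$ ``and a few lower levels,'' so that $\int_\Omega\bigl((T_g)'\bigr)^{-2}d\mu=O\bigl((\log n)^{c(\lambda)}\bigr)$ uniformly over $g\in F_n$ --- is unjustified. The almost-invariance in \eqref{eq: Folner} controls $\num{F_m\triangle gF_m}$ only for $m$ larger than the enumeration index $n(g)$ of $g$; it says nothing special about elements of $F_n$, and the construction places no constraint linking membership in $F_n$ to the enumeration. So for $g\in F_n$ with $n(g)\gg n$, every level $m$ with $n<m<n(g)$ can contribute up to $c(\lambda)\num{F_m}\les(A_m)\approx c/(m\log m)$ to the exponent, and the correct bound is of order $(\log n(g))^{c(\lambda)}$, not $(\log n)^{c(\lambda)}$ --- which can be arbitrarily large compared with $n$. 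On top of this, your aggregated step is too vague: a second-moment bound for $\sum_{g\in F_n}1/(T_g)'$ does not directly force $\sum_{g\in F_n}(T_g)'$ to be large (one needs an extra Cauchy--Schwarz/pigeonhole conversion), and even after that conversion the divergence of the resulting series hinges on the incorrect uniform bound. The paper avoids all of this by working element-by-element in the enumeration: the same Kakutani-type computation as in Lemma \ref{lem: VW} gives $\int_\Omega\bigl((T_{g_n})'\bigr)^{-2}d\mu\leq[\log(n+1)]^{K(\lambda)c(\lambda)}$, Markov's inequality at threshold $n^{-1}$ yields a summable sequence of bad sets, Borel--Cantelli gives $(T_{g_n})'(x)\geq n^{-1}$ for all large $n$ almost surely, the harmonic series then gives $\sum_{g\in G}(T_g)'(x)=\infty$, and the extension of Hopf's criterion to countable group actions concludes. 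If you re-index your argument by the enumeration rather than by the F{\o}lner level (and drop the restriction to $\bigcup_n F_n$, which is what led you astray), you recover precisely this proof.
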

 
 Lemma \ref{group-con} follows from similar calculations given earlier for the proofs of Lemma \ref{nonsing} and \ref{cons}.

 Again, in order to prove that $\lambda$ is an essential value, we will  analyze the action  the permutation group rather than the shift action of the group itself.    Denote by $\fix_G$ the group of finite permutations on $G$; that is, those that fix all but a finite number of elements of $G$.   This group acts on $[0,1]^G$ by setting  $(\sigma x)_g:=x_{\sigma(g)}$ for all $g \in G$ and all $x \in [0,1]^G$.  Note that  action is nonsingular with Radon-Nykodym derivative 
 \[
\sigma'(x)= \frac{d\mu\circ \sigma}{d\mu}(x)=\prod_{h\in G} \frac{f_h\left(x_{\sigma( h)}\right)}{f_h\left(x_h\right)}.
 \]
 Furthermore, we have the following version of Theorem \ref{perm-type}.
 \begin{proposition}\label{prop: Krieger S_G}  
 	Let $\lambda \in (0,1)$ and  $(f_g)_{g\in G}$ be the sequence of functions defined by \eqref{def of f_g} in Section \ref{def-group}.   Then $\big(\Omega,\mathcal{B},\bigotimes_{g\in G}f_g, \fix_G\big)$ is of Krieger type-$\mathrm{III}_\lambda$. 
 \end{proposition}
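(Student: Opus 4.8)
The plan is to mirror the proof of Theorem~\ref{perm-type}, the only structural change being that the dyadic pairing $n\mapsto 2^n$ is replaced by a pairing of each coordinate carrying a non-constant density with a fresh coordinate drawn from the infinite reservoir $R:=G\setminus\bigcup_{n\in\N}F_n$. After enumerating $G$ and identifying $[0,1]^G$ with $[0,1]^{\N}$ and $\fix_G$ with $\fix$, ergodicity of the $\fix_G$-action follows from Lemma~\ref{exch} (equivalently from the tameness criterion of Theorem~\ref{AP}, with Lebesgue measure as the tame measure), since every $f_g$ takes values in $\{\lambda,1,\lambda^{-1}\}$ and hence $\lambda\leq\essinf f_g\leq\esssup f_g\leq\lambda^{-1}$ uniformly in $g$. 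Nonsingularity and the formula for $d\mu\circ\sigma/d\mu$ are recorded in Section~\ref{def-group}; as that derivative is a \emph{finite} product of ratios $f_h(x_{\sigma(h)})/f_h(x_h)$, each lying in $\{\lambda^m:m\in\Z\}$, we obtain $e(\fix_G)\subseteq\{m\log\lambda:m\in\Z\}$. It therefore suffices to show $\log\lambda\in e(\fix_G)$, which I would do by verifying the hypotheses of the approximation Lemma~\ref{lem: CHP} with $\mathcal G$ the countable semi-ring of rational cylinders (those constraining $x_g$ for $g$ in a finite set) and $\delta=1/2$.

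Fix a rational cylinder $\mathbf I$ constraining the coordinates in a finite set $S\subset G$. Since the $F_n$ are pairwise disjoint, choose $n_0$ with $F_n\cap S=\emptyset$ for all $n>n_0$, and, as $R$ is infinite, fix an injection $h:\bigcup_{n>n_0}F_n\to R\setminus S$; thus for $g\in F_n$ with $n>n_0$ the coordinate $x_g$ has density $\hat{f}_n$ while $x_{h(g)}$ has density $\ind_{[0,1]}$. With $C_n:=[0,1]\setminus(A_n\cup B_n)$, put, for such $g$,
\[
Y_g:=\ind\big[(x_g,x_{h(g)})\in C_n\times A_n\big]-\ind\big[(x_g,x_{h(g)})\in A_n\times C_n\big].
\]
As the pairs $(g,h(g))$ are disjoint, the $Y_g$ are independent, and a direct computation using $\les(A_n)=\lambda^{-1}\les(B_n)=(n\log(n+1)\,\num{F_n})^{-1}$ and $\les(C_n)\to1$ gives $\sum_{g\in F_n}\E Y_g=(1-\lambda)\les(C_n)/(n\log(n+1))$; summing over $n$, the mean of $\sum_{g\in\bigcup_{n_0<n\leq M}F_n}Y_g$ grows like $(1-\lambda)\log\log M\to\infty$, while a similar computation shows its variance is $O(\log\log M)=o\big((\log\log M)^2\big)$. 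Hence, by Chebyshev's inequality and because the $Y_g\in\{-1,0,1\}$, for $M$ large the running sum of the $Y_g$ (in some fixed enumeration of $\bigcup_{n_0<n\leq M}F_n$) reaches the value $1$ on a set $\mathbf E$ with $\mu(\mathbf E)>1/2$; let $\tau(x)$ record the index at which it first does so, and take $V$ to be the composition of the disjoint transpositions $\sigma_{g,h(g)}$ over those $g$ up to index $\tau(x)$ with $Y_g(x)\neq0$, with domain $\mathbf D:=\mathbf E\cap\mathbf I$.

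Because $\mathbf E$ depends only on the coordinates in $\bigcup_{n>n_0}(F_n\cup h(F_n))$, which is disjoint from $S$, the events $\mathbf E$ and $\mathbf I$ are independent, so $\mu(\mathbf D)=\mu(\mathbf E)\mu(\mathbf I)>\frac12\mu(\mathbf I)$ and $V(\mathbf D)\subseteq\mathbf I$, which gives the first hypothesis of Lemma~\ref{lem: CHP}. Using \eqref{def of f_g} and the identity $\frac{d\mu\circ\sigma_{g,h(g)}}{d\mu}(x)=\hat{f}_n(x_{h(g)})/\hat{f}_n(x_g)$, one checks exactly as in Theorem~\ref{perm-type} that $\frac{d\mu\circ V}{d\mu}(x)=\lambda^{\sum Y_g(x)}=\lambda$ on $\mathbf D$, giving the second hypothesis with no $\epsilon$ needed; that $(x,Vx)\in\mathcal O_{\fix_G}$ is clear. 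The step I expect to require the most care is injectivity of $V$, which should go through verbatim as in Theorem~\ref{perm-type}: from $Vx=Vx'$ one gets $Y_g(Vx)=-Y_g(x)$ on the relevant range, forcing $Y_g(x)=Y_g(x')$ there, hence $\tau(x)=\tau(x')$ by minimality and the transposed index sets coincide, so $x=x'$ since permutations are injective. Feeding this into Lemma~\ref{lem: CHP} yields $\log\lambda\in e(\fix_G)$, and together with $e(\fix_G)\subseteq\{m\log\lambda\}$ this shows the action is of type-$\mathrm{III}_\lambda$. As in Theorem~\ref{perm-type}, only the sizes of the $A_n$ — chosen so that $\sum_n\num{F_n}\les(A_n)$ diverges — enter the argument here, not the F{\o}lner property \eqref{eq: Folner}.
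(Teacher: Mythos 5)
Your proposal is correct and follows essentially the same route as the paper's proof: ergodicity via Lemma~\ref{exch} (tameness), then $\log\lambda\in e(\fix_G)$ via Lemma~\ref{lem: CHP} with rational cylinders and $\delta=1/2$, by pairing each coordinate in $F_n$ (for $n$ beyond the cylinder's support) with a uniform coordinate from the reservoir $G\setminus\bigcup_n F_n$, running the same $\{-1,0,1\}$-valued mean/variance/Chebyshev argument, and using the stopping time $\tau$ to build an injective partial transformation $V$ made of disjoint transpositions with $\frac{d\mu\circ V}{d\mu}=\lambda$. The only differences are cosmetic (an arbitrary injection into the reservoir instead of the paper's fixed enumerations), and your closing observation that the F{\o}lner property is not used in this step is consistent with the paper, where \eqref{eq: Folner} only enters the nonsingularity/conservativity of the $G$-action.
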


 In order exchange Proposition \ref{prop: Krieger S_G} for a statement about the Bernoulli group action we will apply the Hopf method argument as in Avraham-Re'em \cite[Section 4]{Nachi}.  Let $G$ be a countable group.   We write  $g_n\to\infty$ if for every finite $H \subset G$ and for all $n$ sufficiently large we have  $g_n\notin H$.  Let $\Phi: G \to \R$.   If for all sequences $g_n \to \infty$, the  $ \lim_{n \to \infty}   \Phi(g_n)$ exists and has the same value $L$, then we write $\lim_{g \to \infty} \Phi(g) =L$.

    Let $(\Omega, \borel, \mu)$ be a standard measure space, endowed with a complete separable metric $d$ on $\Omega$ that generates $\borel$.  Suppose $G\curvearrowright\left(\Omega,\B,\mu\right)$.   A pair of points $(x,x')\in \Omega\times \Omega$ is \dff{an asymptotic pair} if 
 \[
 \lim_{g\to\infty}d\left(gx,gx'\right)=0.
 \]
 We say that an action of another countable group $\Gamma$  on $\left(\Omega,\mathcal{B},\mu\right)$ is \dff{an action by $G\curvearrowright\left(\Omega,\borel,\mu\right)$ asymptotic pairs} if for every $\gamma\in \Gamma$ for $\mu$-almost every $x\in \Omega$, we have
 \[
 \lim_{g\to\infty}d\left(gx,g\gamma x\right)=0.
 \]
 \begin{remark} 
 	\label{key-example}
 	 Consider the Bernoulli action  $G\curvearrowright\left(\Omega,\B,\mu\right)$ given in Section \ref{def-group}.  Note that 
 	$\fix_G\curvearrowright\left(\Omega,\B,\mu\right)$ is an action by $G\curvearrowright\left(\Omega,\B,\mu\right)$ asymptotic pairs.  
 	Let $\sigma\in\fix_G$ and write $$H:=\left\{h\in G:\ \sigma(h)\neq h\right\}.$$ For every $x\in \Omega$, for all $g\in G$, the set $\left\{h\in G: (gx)_h\neq (g\sigma x)_h\right\}$ is contained in $gH$, which is finite, since $H$ is finite.   Hence for all $x\in \Omega$ and any $g_n \to \infty$, we have that for all $n$ sufficiently large 
 	\[
 	d\left(g_nx,g_n\sigma x\right) = 0.
 	\]
 	\erk
 \end{remark}
Let $\lambda \in (0,1)$.  Consider the action $G\curvearrowright\left(\Omega,\F,\mu\right)$ as in Section \ref{def-group}.   Recall that we defined the discrete Maharam extension of a group action in Section \ref{ME-Z}.    Thus the \dff{discrete Maharam extension of the $G$-action}  on $\Omega\times \Z$ is given by
 \[
 g(x,n):=\big(g(x),n-\log_\lambda\left(g'(x)\right)\big)
 \] 
 and
 the \dff{discrete Maharam extension of the $\fix_G$-action}  on $\Omega \times \Z$ is given by
 \[
 \sigma(x,n):=\big(\sigma(x),n-\log_\lambda\left(\sigma'(x)\right)\big).
 \] 
 
 \begin{lemma}
 	\label{asy-pairs}
 	Let $\lambda \in (0,1)$.  Consider the action $G\curvearrowright\left(\Omega,\borel,\mu\right)$ as in Section \ref{def-group}.   The discrete Maharam extension of the  $\fix_G$-action on $\Omega \times \Z$  is an action of   asymptotic pairs of the discrete Maharam  extension of the $G$-action on $\Omega \times \Z$.  
 \end{lemma}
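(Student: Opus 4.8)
The plan is to reduce the claim to two statements about the pair of $G$- and $\fix_G$-Maharam extensions living on $\Omega\times\Z$, which I shall equip with a compatible complete separable metric, say $\tilde d\bigl((x,m),(x',m')\bigr):=d(x,x')+\mathbf{1}[m\neq m']$, where $d$ is the fixed metric on $\Omega=[0,1]^G$. Since the $\Z$-factor is discrete, convergence in $\tilde d$ means convergence in $d$ of the first coordinate together with eventual equality of the second. Hence, fixing $\sigma\in\fix_G$, it suffices to show that for $\mu$-almost every $x$ (equivalently, for $\tilde\mu$-almost every $(x,m)$, since $\tilde\mu=\sum_m\lambda^m\,\mu\otimes\delta_m$) the following hold as $g\to\infty$ in $G$: (i) $d\bigl(T_g x,T_g\sigma x\bigr)\to 0$; and (ii) the $\Z$-coordinates of $\tilde T_g(x,m)$ and $\tilde T_g\tilde\sigma(x,m)$ coincide for all $g$ outside a finite set that depends on $x$ and $\sigma$ but not on $m$. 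Part (i) is exactly Remark \ref{key-example}: the coordinates on which $T_g x$ and $T_g\sigma x$ disagree lie in the $g$-translate of the finite set $H:=\{h\in G:\sigma(h)\neq h\}$, and such translates eventually leave every finite set.

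For (ii), abbreviate $\omega_g(x):=\log_\lambda\frac{d\mu\circ T_g}{d\mu}(x)$ and $\theta_\sigma(x):=\log_\lambda\frac{d\mu\circ\sigma}{d\mu}(x)$, so that $\tilde T_g(x,m)=\bigl(T_g x,\,m-\omega_g(x)\bigr)$ while $\tilde T_g\tilde\sigma(x,m)=\bigl(T_g\sigma x,\,m-\theta_\sigma(x)-\omega_g(\sigma x)\bigr)$. The difference of the two $\Z$-coordinates is therefore the integer $\omega_g(x)-\omega_g(\sigma x)-\theta_\sigma(x)$, and this is what must vanish for large $g$. Using the product formula $\frac{d\mu\circ T_g}{d\mu}(x)=\prod_{h\in G}\frac{f_{\alpha_g(h)}(x_h)}{f_h(x_h)}$, where $\alpha_g$ is the corresponding translation of the index set $G$ (for the $\Z$-shift this was $k\mapsto k-n$) and satisfies $\alpha_g(h)\to\infty$ for each fixed $h$ as $g\to\infty$, together with $(\sigma x)_h=x_{\sigma(h)}$ and $\theta_\sigma(x)=\sum_h\bigl(\log_\lambda f_h(x_{\sigma(h)})-\log_\lambda f_h(x_h)\bigr)$, a short rearrangement cancels the $\theta_\sigma(x)$-terms and leaves
\[
\omega_g(x)-\omega_g(\sigma x)-\theta_\sigma(x)=\sum_{h\in H}\Bigl(\log_\lambda f_{\alpha_g(h)}(x_h)-\log_\lambda f_{\alpha_g(h)}(x_{\sigma(h)})\Bigr),
\]
the sum being supported on the finite set $H$ because each summand is $0$ when $\sigma(h)=h$.

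It then remains to argue that, for $\mu$-a.e.\ $x$, every term of this finite sum is $0$ once $g$ is large, and here I would invoke two features of the construction of Section \ref{def-group}. First, since $\les(A_n),\les(B_n)\to 0$ and $(A_n)_n,(B_n)_n$ are decreasing, for $\mu$-a.e.\ $x$ there is an $N=N(x)$ with $x_h\in C_n:=[0,1]\setminus(A_n\cup B_n)$ for every $h\in H$ and every $n\geq N$; the excluded $x$'s are those with some $x_h$ in the Lebesgue-null sets $\bigcap_n A_n$ or $\bigcap_n B_n$. Second, the $F_n$ being pairwise disjoint, each $g'\in G$ lies in at most one $F_n$, and $f_{g'}$ is identically $1$ if $g'$ lies in none of them and equals that $\hat f_n$ otherwise. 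Consequently, whenever $\alpha_g(h)\notin F_1\cup\cdots\cup F_{N-1}$, the density $f_{\alpha_g(h)}$ is either identically $1$ or equals some $\hat f_n$ with $n\geq N$; in the first case the $h$-term is trivially $0$, and in the second $x_h,x_{\sigma(h)}\in C_n$ forces $\hat f_n(x_h)=1=\hat f_n(x_{\sigma(h)})$, so the $h$-term vanishes again. Since $F_1\cup\cdots\cup F_{N-1}$ is a finite set and $\alpha_g(h)\to\infty$ as $g\to\infty$, the finitely many conditions $\alpha_g(h)\notin F_1\cup\cdots\cup F_{N-1}$, $h\in H$, hold simultaneously for all $g$ outside a finite set, which is precisely (ii). I expect the only points requiring care to be the cocycle rearrangement producing the displayed identity and the almost-sure claim that the finitely many coordinates at play eventually fall into the ``neutral'' region $C_n$; neither is a substantial obstacle.
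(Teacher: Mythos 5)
Your proposal is correct and follows essentially the same route as the paper: split the metric on $\Omega\times\Z$ into the base part (handled by Remark \ref{key-example}) and the cocycle part, rewrite $\omega_g(x)-\omega_g(\sigma x)-\theta_\sigma(x)$ as a finite sum over $H=\{h:\sigma(h)\neq h\}$, and observe it vanishes for all large $g$ because for a.e.\ fixed coordinate value only finitely many of the densities $f_{g'}$ differ from $1$ there. The only cosmetic difference is that you cancel so as to compare the two arguments $x_h,x_{\sigma(h)}$ under a common index $\alpha_g(h)$, whereas the paper compares the two indices $g^{-1}h,g^{-1}\sigma(h)$ at the common argument $x_{\sigma(h)}$; your explicit treatment of the null sets $\bigcap_n A_n$, $\bigcap_n B_n$ is a slightly more careful version of the paper's ``$\{g:f_g(u)\neq 1\}$ is finite'' step.
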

 
 \begin{proof}[Proof of Theorem \ref{group-exists}]
 	Let $\lambda \in (0,1)$.  Consider the action $G\curvearrowright\left(\Omega,\borel,\mu\right)$ as in Section \ref{def-group}.
 	Let $G\curvearrowright\left(\Omega',\mathcal{C},\nu\right)$ be an additional 
 	ergodic probability preserving action. 
 	Since 
 	\[
 	\frac{d(\mu\otimes \nu)\circ g}{d(\mu\otimes\nu)}(x,y)=\frac{d\mu\circ g}{d\mu}(x)=g'(x),
 	\]
 	the discrete Maharam  extension of the diagonal action on the product space  $G\curvearrowright\left( \Omega \times \Omega',\borel \otimes\mathcal{C},\mu\otimes\nu\right)$ is given by
 	\[
 	g(x,y,n)=\big(gx,gy,n-\log_\lambda(g'(x))\big),
 	\]
 	 for all $(x,y,n)\in \Omega \times \Omega' \times \Z$.
 	We specify that for $\sigma \in \fix_G$ the action on the product space $\Omega \times \Omega'$ is given by ignoring the second coordinate so that
 	$$ \sigma(x,y) = (\sigma x, y).$$
 	Thus the discrete Maharam extension of $\fix_G$ is given by
 	\[
 	\sigma(x,y,n)=\big(\sigma x,y,n-\log_\lambda(\sigma'(x))\big).
 	\]
 As in Remark \ref{key-example}, the $\fix_G$-action is an action by asymptotic pairs of the discrete  Maharam extension of the diagonal $G$-action.

 Let $F:\Omega\times \Omega'\times\bb{Z}\to [0,1]$ be a $G$-invariant function with respect to $\widetilde{\mu \times \nu}$, the measure associated with discrete Maharam extension.    We will show that $F$ is a constant. By \cite[Lemma 4.4]{Nachi}, 
 $F$ is also $\fix_G$-invariant.   Proposition \ref{prop: Krieger S_G} together with  Theorem \ref{duality} give that  discrete Maharam extension of $\fix_G\curvearrowright\left(\Omega,\B,\mu\right)$  is ergodic.  Hence it follows from the definition of $\fix_G$-action on $\Omega\times \Omega'\times \Z$, which is the identity on $\Omega'$, and ergodicity that there exists $\psi:\Omega'\to [0,1]$ such that $F(x,y,n)=\psi(y)$.  Since  $F$ is $G$-invariant 
 it follows that for almost every $y\in \Omega'$, that  $\psi(y)=\psi(gy)$  for all $g\in G$,
  from which the assumed ergodicity of $G\curvearrowright\left(\Omega',\mathcal{C},\nu\right)$ yields that $F$ is a constant almost surely. 
 
 Hence $$G\curvearrowright\left(\Omega \times \Omega'\times \Z,\borel\otimes\mathcal{C}\otimes\Z,\widetilde{\mu\otimes\nu}\right)$$ is ergodic. From another application of Theorem \ref{duality} we obtain   that $G\curvearrowright\left(\Omega \times \Omega',\borel\otimes\mathcal{C},\mu\otimes\nu\right)$ is of type-$\mathrm{III}_\lambda$.  Since the system $(\Omega', \mathcal{C}, \nu)$ was arbitrary, we have the desired stability.   
 \end{proof}	
 
 \subsection{The remaining proofs for Theorem \ref{group-exists}}
 \label{remain}

 \begin{proof}[Proof of Lemma \ref{group-con}]
 	Let $g\in G = \ns{g_k}_{k \in \N}$ and $n:=n(g)\in\mathbb{N}$ such that $g=g_n$. We will show that the action is nonsingular by applying Kakutani's theorem on the equivalence of product measures; furthermore we will obtain that there exists $K(\lambda)\geq \frac{6}{\lambda}$ such that 
 	\begin{equation}\label{eq: Amenable conser}
 		\sum_{h\in G}\int_0^1\left(\sqrt{f_{g^{-1}h}}(u)-\sqrt{f_h}(u)\right)^2du<K(\lambda)\log\log(n(g)+1).
 	\end{equation}

 Let 
 $$H_k(g):=F_k\triangle \left(g F_k\cup g^{-1}F_k\right).$$
 	Note that if  $h\notin \bigcup_{k\in\N} H_k(g)$, then $f_{g^{-1}h}=f_h$. Thus for $h \in H_k(g)$, we have
 	\begin{align*}
 		\int_0^1\left(\sqrt{f_{g^{-1}h}}-\sqrt{f_h}\right)^2du&\leq \int_0^1\left(\sqrt{\hat{f}_k}-1\right)^2du\\
 		&\leq \lambda^{-1} \left(\les\left(A_k\right)+\les\left(B_k\right)\right)\\
 		&\leq \frac{2}{\lambda k\log(k+1)}\frac{1}{ \num{F_k}}, 
 	\end{align*}
 	the second inequality follows from the simply inequalities $$\sqrt{\lambda^{-1}}-1,1-\sqrt{\lambda}<\lambda^{-1/2}.$$
 	 Consequently,
 	\begin{align*}
 		\sum_{h\in G}\int_0^1\left(\sqrt{f_{g^{-1}h}}-\sqrt{f_h}\right)^2du&=\sum_{k=1}^\infty \sum_{h\in H_k(g)}\int_0^1\left(\sqrt{f_{g^{-1}h}}-\sqrt{f_h}\right)^2du\\
 		&\leq 	2\lambda^{-1}\sum_{k=1}^\infty \sum_{h\in H_k(g)}\frac{\num{H_k(g)}}{\num{F_k}}\frac{1}{k\log(k)}.
 	\end{align*}
 	By \eqref{eq: Folner}, 
 	$$
 	\frac{\num{H_k(g)}}{\num{F_k}}\leq\begin{cases}
 	\frac{2}{k}, & k\geq n(g),\\
 	3, & 1\leq k\leq n(g).
 	\end{cases}
 	$$
 	Hence 
 	\begin{align*}
 		\sum_{h \in G}\int_0^1\left(\sqrt{f_{g^{-1}h}}-\sqrt{f_h}\right)^2du&\leq \frac{6}{\lambda}\sum_{k=1}^{n(g)}\frac{1}{k\log(k+1)}+\frac{4}{\lambda}\sum_{k=n(g)}^\infty\frac{1}{ k^2\log(k+1)}\\
 		&\leq \frac{6}{\lambda}\left(\log\log(n(g)+1)+n(g)^{-1}\right),
 	\end{align*}
 	which concludes the proof of \eqref{eq: Amenable conser} from which nonsingularity follows.

 	Note that if $h \notin H_k(g)$, then 
 	\[
 	\int_0^1 \left(\frac{f_h(u)}{f_{g^{-1}h}}\right)^2f_h(u)du=\int_0^1f_h(u)du=1.
 	\]
If $h\in H_k(g)$, then 	as in the proof of Lemma \ref{lem: VW}, there exists $c(\lambda)>0$ such that  
 	\[
 	\int_0^1\left(\frac{f_h(u)}{f_{g^{-1}h}}\right)^2f_h(u)du\leq \exp\left(c(\lambda)\les\left(A_k\right)\right). 
 	\]
 	Thus
 	\begin{align*}
 		\int_{\Omega}\left(\frac{1}{\left(T_g\right)'(x)}\right)^2d\mu(x)&=\prod_{k=1}^\infty\prod_{h\in H_k(g)}\int_0^1 \left(\frac{f_h(u)}{f_{g^{-1}h}}\right)^2f_h(u)du\\
 		&\leq \exp\left(c(\lambda)\sum_{k=1}^\infty \les\left(A_n\right)\num{H_k(g)}\right)\\
 		&\leq 	 \exp\left(K(\lambda)c(\lambda)\log\log(n(g))+1)\right),
 	\end{align*}
 	where the last bound is the same as in the proof of \eqref{eq: Amenable conser}. 
 	
 	Hence by Markov's inequality,  the sequence of sets 
 	\[
 	C_n:=\left\{x \in \Omega:\ \left(T_{g_n}\right)'(x)<n^{-1}\right\}
 	\]
satisfies
 	\[
 	\mu\left(C_n\right)=\mu\ns{x \in \Omega:  ( (T_{g_n})'(x))^{-2}>n^{2}}  \leq \frac{\left[\log(n+1)\right]^{K(\lambda)c(\lambda)}}{n^2}.
 	\]
 	The first Borel-Cantelli lemma implies that for $\mu$-almost all $x\in \Omega$ there exists $N(x)\in\mathbb{N}$ such that for all $k>N(x)$, we have $\left(T_{g_k}\right)'(x)\geq k^{-1}$. Again, the divergence of the harmonic series gives
 	\[
 	\sum_{g\in G}\left(T_{g}\right)'(x)=\sum_{n\in \bb{N}}\left(T_{g_n}\right)'(x)=\infty,
 	\]
for $\mu$-almost every $x \in \Omega$. 
	By a routine extension of Hopf's criteria  \cite[Proposition 1.3.1]{AaroBook} to the case of a general countable group action,   the Bernoulli action is conservative. 
 \end{proof}

 Our proof of Proposition {\ref{prop: Krieger S_G} will be an adaptation of 
 	the 
 	proof of Theorem \ref{perm-type} and in our proof we will point out the necessary modifications.     
Consider the action $G\curvearrowright\left(\Omega,\B,\mu\right)$ given in Section \ref{def-group}.
 Given a finite subset $H\subset G$ and  open subintervals $\left\{I_h\right\}_{h\in H}$ with rational endpoints, we say that the set 
 \[
 \mathbf{I}=\left\{x\in \Omega: \text{for all $h \in H$ we have} \ x_h\in I_h\right\}.
 \]
 is a \dff{rational cylinder set determined on $H$}. 
 As in  Section \ref{perm}, the collection $\mathcal{G}$ of rational cylinders is a countable semiring and the ring generated by $\mathcal{G}$ is dense in $\borel$.

 \begin{proof}[Proof of Proposition \ref{prop: Krieger S_G}]
 We first note that the ergodicity of the nonsingular system $\big(\Omega,\mathcal{B},\bigotimes_{g\in G}f_g,\fix_G\big)$ follows from  Lemma \ref{exch}, since in its proof we can replace $\N$ be $G$.

We will show that $\lambda$ is an essential value for the $\fix_G$-action and this is done by verifying the conditions of Lemma \ref{lem: VW} with $\mathcal{G}$ the rational cylinders and $\delta=1/2$. Let $H\subset G$ 
be 
finite and $\mathbf{I}$ be a rational cylinder determined on $H$.     

In order to mimic the proof of Theorem \ref{perm-type}, we will need to fix certain useful enumerations of subsets of $G$.  
 	Enumerate $\bigcup_{n \in \N} F_n = (\f_n)_{n \in \N}$ so that each element of $F_m$ has a greater index than each element of $F_n$, if $m >n$.  Specifically,  write $n_{-1}=0$ and for $k\in \N$, let $n_k=n_{k-1}+(\num{F_k})$. Then for all $k\in\bb{N}$ we choose an enumeration  $$\ns{\f_j}_{j=n_{(k-1)}}^{n_k-1}=F_k.$$
 
  We will write for $\mathbf{A}_j=A_n$, $\mathbf{B}_j=B_n$, and $\mathbf{C}_j=C_n$ if $\f_j\in F_n$, or equivalently $j\in \left[n_{k-1},n_k\right)$.
 	Let $(h_j)_{j=1}^\infty$ be an enumeration of the countably infinite set $G\setminus \left(\bigcup_{n \in \N} F_n\right)$ and $N\in\mathbb{N}$ such that 
 	$$H\subset \Big( \{\f_j\}_{j=0}^{n_N-1}  \ \cup \  \{h_j\}_{j=0}^{n_N-1} \Big):= H'.$$

 	With these enumerations we define the random variables as in the proof of Theorem \ref{perm-type}.   
 	Let $X: \Omega \to \Omega$ be the identity function, $X(x) = x$ so that $(X_g)_{g\in G}$ are a collection of independent continuous random variables with densities $(f_g)_{g \in G}$.  
 	For  $j\in\bb{Z}$, let  $Y_j:\Omega\to \{-1,0,1\}$ be given by 
 	\[
 	Y_j=\mathbf{1}_{\mathbf{A}_j}\left(X_{h_j}\right)\mathbf{1}_{\mathbf{C}_j}\left(X_{\f_j}\right)+\mathbf{1}_{\mathbf{A}_j}\left(X_{\f_j}\right)\mathbf{1}_{\mathbf{C}_j}\left(X_{h_j}\right).
 	\]
 	For all $n \geq n_N$, let $Z_n:=\sum_{j=n_N}^nY_j$. 
 	
 	As in the proof of Theorem \ref{perm-type}, by elementary expectation-variance calculations it is easy to verify that
 	\begin{equation}
 		\label{claimed}
 	\mu( Z_j \geq 1) \to 1  \text{ as }  j \to \infty.
 	\end{equation} 
 	By \eqref{claimed} let $J\geq n_N$ be such  that the set 
 	\[
 	\mathbf{E}:=\left\{x\in\Omega:\ \exists k\in \left[n_N,J\right],\ Z_k=1\right\}
 	\]
 	satisfies $\mu(\mathbf{E})\geq\frac{1}{2}$. Let $\mathbf{D}:=\mathbf{I} \cap \mathbf{E}$.   Note that  $\mathbf{I}$ is determined on $H \subset H'$ and depends on $(X_g)_{g \in H}$ and $\mathbf{E}$ depends on $(X_g)_{g \in G \setminus H'}$.  Since $X$ is 
 	a sequence independent of random variables,  
 	the events $\mathbf{E}$ and $\mathbf{I}$ are independent so that $\mu(\mathbf{D}) \geq \frac{1}{2} \mu(\mathbf{I})$.  Let  $\tau:\mathbf{D}\to \left[n_N,J\right]$ be given by  $\tau(x):=\min\left\{l\in \left[n_N,J\right]:\ Z_l=1\right\}$ and $V:\mathbf{D}\to \mathbf{I}$ be given by 
 	$$
 	(Vx)_g:=\begin{cases}
 	x_{\f_j}, &\exists j\in \left[n_N,\tau(x)\right],\ Y_j\neq 0\  \text{and}\ g=h_j,\\
 	x_{h_j}, &\exists j\in \left[n_N,\tau(x)\right],\ Y_j\neq 0\  \text{and}\ g=\f_j,\\
 	x_g, &\ \text{otherwise}.
 	\end{cases}
 	$$ 
 	As in the proof of Theorem \ref{perm-type} an easy calculation shows that for all $x\in \mathbf{D}$, we have 
 	\[
 	\frac{d\mu\circ V}{d\mu}=\lambda^{Z_{\tau(x)}}=\lambda.
 	\]
 	Again, from the proof of Theorem \ref{perm-type},  it is routine to verify that $V$ is injective.
 	
 	By Lemma \ref{lem: CHP}, $\lambda$ is an essential value for the $\fix_G$-action and since the Radon-Nikodym derivatives of the $\fix_G$-action are in $\lambda^\mathbb{Z}$ we conclude that $\left(\Omega,\mathcal{B},\bigotimes_{g\in G}f_g,\fix_G\right)$ is of Krieger type-$\mathrm{III}_\lambda$. 
 \end{proof}

 \begin{proof}[Proof of Lemma \ref{asy-pairs}]
 	For $\sigma \in \fix_G$, $g \in G$, and for almost every $x \in \Omega=[0,1]^G$,  we have 
 	\begin{align*}
 		g'\circ \sigma(x)&=\prod_{h\in G}\frac{f_{g^{-1}h}\left(x_{\sigma(h)}\right)}{f_h\left(x_{\sigma(h)}\right)}\\
 		&=\prod_{h\in G,\ \sigma(h)\neq h}\frac{f_{g^{-1}h}\left(x_{\sigma(h)}\right)}{f_{g^{-1}h}\left(x_h\right)}\frac{f_h\left(x_h\right)}{f_h\left(x_{\sigma(h)}\right)}\prod_{h\in G}\frac{f_{g^{-1}h}\left(x_h\right)}{f_h\left(x_h\right)}\\
 		&=\frac{g'(x)}{\sigma'(x)}\prod_{h\in G,\ \sigma(h)\neq h}\frac{f_{g^{-1}h}\left(x_{\sigma(h)}\right)}{f_{g^{-1}h}\left(x_h\right)}\\
 		&=\frac{g'(x)}{\sigma'(x)}\prod_{h\in G,\ \sigma(h)\neq h}\frac{f_{g^{-1}h}\left(x_{\sigma(h)}\right)}{f_{g^{-1}\sigma(h)}\left(x_{\sigma(h)}\right)},
 	\end{align*}
 	where the last equality comes from rearranging the terms in the denominators in the finite product. 
 	Since for all $u\in [0,1]$ the set $$\left\{g\in G:\ f_g(u)\neq 1\right\}$$ is finite,  we see that 
 	\[
 	\lim_{g\to\infty}\prod_{h\in G,\ \sigma(h)\neq h}\frac{f_{g^{-1}h}\left(x_{\sigma(h)}\right)}{f_{g^{-1}\sigma(h)}\left(x_{\sigma(h)}\right)}=1.
 	\]
 	Hence for all $\sigma \in\fix_G$ and  for almost all $x\in\Omega$, we have
 	\begin{equation}\label{eq: log RN relation}
 		\lim_{g\to\infty}\big[\log_\lambda(g'\circ\sigma(x))-\log(\sigma'(x))-\log_\lambda(g'(x))\big]=0.
 	\end{equation}
 	For $\tilde{\mu}$-almost every $(x,n)\in\Omega\times\bb{Z}$, we have
 	\begin{align*}
 		d_{\Omega \times \Z} \big[g(x,n),g(\sigma(x,n) )\big]&=d_{\Omega}(gx,g\sigma x) \ +  \\ &\quad \big|\log_\lambda(g'(x))-[\log_\lambda(g'(\sigma(x))-\log_\lambda(\sigma'(x))]\big|.	
 	\end{align*}
 	By Remark \ref{key-example}, the first term on the right tends to $0$ as $g\to\infty$.  The second one tends to $0$ by \eqref{eq: log RN relation}. 
 \end{proof}

\section{Concluding remarks}\label{conclude}

\subsection{The proof of Theorem \ref{countable}}

The proof of Theorem \ref{countable} follows from a routine modification of the proof of Theorem \ref{main-exists}, which we outline below.

\begin{proof}[Proof of Theorem \ref{countable}]
Let $\lambda \in (0,1)$ and consider the Bernoulli shift $(\Omega, \borel, \mu, T)$ given in Section \ref{def-c}.  By Remark \ref{int-den-p}, the functions $(f_n)_{n \in \Z^{+}}$ embedded in the definition of the product measure $\mu$ are densities with respect the underlying probability measure $\rho$.  

  The proof of Theorem \ref{countable} follows from replacing the underlying probability measure,  Lebesgue measure on $[0,1]$,  in Section \ref{proof-exists} with the {\em new} underlying measure $\rho$.  For example,  integrals with respect to Lebesgue measure in Lemma \ref{nonsing} become integrals (or weighted sums) with respect to
  the underlying measure  $\rho$  
  and in Lemma \ref{exch} we verify tameness with respect to $\rho$. 

With this substitution, the proof is the same.
	\end{proof}

\subsection{Type-$\mathrm{III}_1$ examples on $[0,1]^\mathbb{Z}$}
By considering a certain mixture
 of our previous densities from Section \ref{def-l} it is not difficult to write down  type-$\mathrm{III}_1$ Bernoulli shifts of a similar form.

Let $0<c<1<M$ and 
 consider a  sequence of functions $f_n:[0,1]\to (c,M)$ such that 
\[
\sum_{n=1}^\infty \int_0^1 \left(\sqrt{f_n}-\sqrt{f_{n-1}}\right)^2du<\infty
\] 
and 
\[
\sum_{n=1}^\infty \int_0^1 \left(\sqrt{f_n}-1\right)^2du=\infty.
\]
These conditions  imply that if we set $f_n=1$ for all $n<0$, then shift is a nonsingular $K$-automorphism with respect to $\mu=\bigotimes_{n\in\bb{Z}}f_n$ and the $\fix$-action is ergodic since the tameness condition of Aldous and Pitman   holds as the functions are uniformly bounded from above and below and Theorem \ref{AP} applies.

In addition, if   there exists $a<1$ such that 
\[
\int_{[0,1]} \left(\frac{1}{\left(T^n\right)'}\right)^2d\m=O(n^a),
\]
then the shift is conservative and ergodic. Using the argument with asymptotic pairs, in order that the shift will be type-$\mathrm{III}_1$,  it is sufficient that the $\fix$-action is of type-$\mathrm{III}_1$. The following two constructions satisfy all these conditions, and we state them without proof.   The first construction will be in the spirit of the constructions given for the proof of Theorem \ref{main-exists}, whereas the latter construction resembles constructions given in for a Bernoulli shift on two symbols.
\begin{example}
Let $0<\muu<\lambda<1$ be two numbers such that $\log(\muu)$ and $\log(\lambda)$ are linearly independent over $\mathbb{Q}$. Let $A_n,B_n,D_n,E_n$ be a decreasing sequence of disjoint intervals such that
\[
\mathcal{L}\left(A_n\right)=\mathcal{L}\left(D_n\right)=\lambda^{-1}\mathcal{L}\left(B_n\right)=\mu^{-1}\mathcal{L}\left(E_n\right)=\frac{1}{(n+4)\log (n+4)}.
\]
Set for $n\leq 1$, $f_n \equiv 1$.    For $n\geq 2$, let $f_n:[0,1]\to\left\{\delta,\lambda,1,\lambda^{-1},\delta^{-1}\right\}$ be given by  
$$
f_n(u)=\begin{cases}
\lambda, &\ u\in A_n,\\
\lambda^{-1}, &\ u\in B_n,\\
\muu,&\ u\in D_n,\\
\muu^{-1}, &\ u\in E_n,\\
1,&\ \text{otherwise}.
\end{cases}
$$
The proof of conservativity of the shift is similar  to the prove of Proposition \ref{cons}.   
With an argument similar to the proof of Theorem \ref{perm-type}, it follows that $\log(\muu)$ and $\log(\lambda)$ are essential values for the $\fix$-action.  Since the essential values are a closed subgroup of $\mathbb{R}$,
 the assumed rational independence 
 gives that  then the $\fix$-action is of type-$\mathrm{III}_1$.    \erk
\end{example}

\begin{example}
Set $$\lambda_n:=1-\frac{1}{\sqrt{n\log n}}.$$   Let $f_n:[0,1]\to \left\{\lambda_n,2-\lambda_n\right\}$ be defined by 
$$
f_n(u)=\begin{cases}
\lambda_n, &\ 0\leq u\leq \frac{1}{2},\\
2-\lambda_n, &\ \frac{1}{2}\leq u\leq 1.
\end{cases}
$$
It is not difficult to verify that the resulting Bernoulli shift will be of type-$\mathrm{III}_1$. \erk
\end{example}

\subsection{Power weakly mixing}
\label{pwm}

Recall that $T$ is \dff{power weakly mixing} if  for all $n_1,n_2,...,n_k\in\bb{Z}$, the product, $T^{n_1}\times T^{n_2}\times \cdots \times T^{n_k}$ is ergodic.  Power weakly mixing type-$\mathrm{III}$ Bernoulli shifts were constructed by Kosloff  \cite[Theorem 7]{KosZeroType}.  The Bernoulli shifts we constructed are also weakly power weakly mixing.

\begin{corollary}
	\label{PW}
	Let $T$  be the    Bernoulli shift given in Theorem \ref{main-exists}.   Then $T$ is  power weakly mixing.
\end{corollary}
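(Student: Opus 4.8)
The plan is to show that for any nonzero integers $n_1,\dots,n_k$ the product $S:=T^{n_1}\times\cdots\times T^{n_k}$, acting on $(\Omega^k,\borel^{\otimes k},\m^{\otimes k})$ with $(\Omega,\borel,\m,T)$ the system of Theorem \ref{main-exists}, is ergodic, and to do this exactly as in the proof that $T$ itself is ergodic (Corollary \ref{together}): namely, by showing that $S$ is a conservative $K$-automorphism and invoking Lemma \ref{ST-K}. The advantage of this route is that it never requires identifying the Krieger type of $S$. Nonsingularity of $S$ is immediate since it is a finite product of nonsingular maps, it is invertible since each $T^{n_i}$ is, and $(S^m)'(x^{(1)},\dots,x^{(k)})=\prod_{i=1}^k (T^{mn_i})'(x^{(i)})$.

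For conservativity, first note that Lemma \ref{lem: VW}, together with the identical computation for negative powers (using $f_n\equiv 1$ for $n\le 1$), yields a constant $K=K(\lambda)>1$ such that $\int_\Omega \big((T^n)'\big)^{-2}\,d\m\le K\big(\log(|n|+1)\big)^{c(\lambda)}$ for every nonzero integer $n$, where $c(\lambda)=2(\lambda^3-1+\lambda^{-2}-\lambda)$. Since $\m^{\otimes k}$ is a product measure,
\[
\int_{\Omega^k}\big((S^m)'\big)^{-2}\,d\m^{\otimes k}=\prod_{i=1}^k\int_\Omega\big((T^{mn_i})'\big)^{-2}\,d\m\le C\big(\log(|m|+1)\big)^{kc(\lambda)}
\]
for a constant $C$ depending only on $\lambda,k$ and $n_1,\dots,n_k$. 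Exactly as in the proof of Lemma \ref{cons}, Markov's inequality then gives $\m^{\otimes k}\{x:(S^m)'(x)<m^{-1}\}\le C(\log(m+1))^{kc(\lambda)}/m^{2}$, which is summable because the $m^{-2}$ overwhelms the polynomial growth in $\log m$ introduced by the product; the first Borel--Cantelli lemma yields $\sum_{m\ge1}(S^m)'(x)=\infty$ for $\m^{\otimes k}$-almost every $x$, and the Hopf criterion \cite[Proposition 1.3.1]{AaroBook} shows that $S$ is conservative.

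Next, $S$ is a $K$-automorphism in the sense of Section \ref{conserve}. For a single nonzero $n$, the power $T^n$ is a $K$-automorphism with respect to $\mathcal G^{+}:=\sigma(x_i:i\ge 0)$ when $n>0$ and $\mathcal G^{-}:=\sigma(x_i:i\le 0)$ when $n<0$: all four defining conditions are checked immediately, the triviality of $\bigcap_m(T^n)^{-m}\mathcal G^{\pm}$ being Kolmogorov's zero--one law for the product measure $\m$, just as in Corollary \ref{together}. A finite product of $K$-automorphisms is again a $K$-automorphism, with $\mathcal G:=\bigotimes_{i}\mathcal G_{n_i}$ (where $\mathcal G_{n_i}=\mathcal G^{+}$ or $\mathcal G^{-}$ according to the sign of $n_i$): $\m^{\otimes k}|_{\mathcal G}$ is $\sigma$-finite, $(T^{n_1}\times\cdots\times T^{n_k})'$ is $\borel^{\otimes k}$-measurable, the generation property $\bigvee_m S^m\mathcal G=\borel^{\otimes k}$ passes to products, and $\bigcap_m S^{-m}\mathcal G$ is contained in the product of the tail $\sigma$-algebras of the coordinates, hence trivial by Kolmogorov's zero--one law for product measures (approximate an $S^{-m}\mathcal G$-measurable indicator in $L^2$ by finite sums of functions depending on a single coordinate and apply the reverse martingale theorem coordinatewise). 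Hence $S$ is a $K$-automorphism.

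Combining the two steps, $S$ is a conservative $K$-automorphism, so it is ergodic by Lemma \ref{ST-K}; since $k$ and the nonzero integers $n_1,\dots,n_k$ were arbitrary, $T$ is power weakly mixing. There is no genuine obstacle here: the estimates of Section \ref{proof-exists} survive the passage to a finite product with room to spare, and the one point that deserves a word is the stability of the $K$-property under finite products, which is just Kolmogorov's zero--one law for product measures applied to the relevant decreasing families of $\sigma$-algebras.
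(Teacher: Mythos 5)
Your proposal is correct and takes essentially the same route as the paper's proof: conservativity of $S=T^{n_1}\times\cdots\times T^{n_k}$ from the second-moment bound of Lemma \ref{lem: VW} (i.e.\ \eqref{eq: VW}) via Markov's inequality, Borel--Cantelli and the Hopf criterion, combined with the $K$-property of a finite product of $K$-automorphisms and Lemma \ref{ST-K} to get ergodicity. Your explicit handling of negative powers and your verification that the product is a $K$-automorphism simply fill in details that the paper asserts without proof.
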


\begin{proof}
	Let $\lambda\in (0,1)$ and $\mu=\bigotimes_{n\in\bb{Z}}f_n$ be the corresponding product measure and  $n_1,..,n_k\in\bb{Z}\setminus\{0\}$. Let $\mu^{\otimes k}$ be the $k$-fold product measure of $\mu$ and $S=T^{n_1}\times\cdots\times T^{n_k}$. Since $S$ is a direct product of $K$-automorphisms it is also a $K$-automorphism. It follows from  \eqref{eq: VW} that there exists $c=c(\lambda)>0$ such that for all $r\in\mathbb{N}$, we have
	\begin{align}
		\int_{\Omega^k}\left(\frac{1}{\left(S^r\right)'}\right)^2d\mu^{\otimes k}&=\prod_{j=1}^k\int_{\Omega}\left(\frac{1}{\left(T^{rn_j}\right)'}\right)^2d\mu  \nonumber\\
		\label{USE-MAH}
		&\leq \prod_{j=1}^k\left(\log\left(r\left|n_j\right|+1\right)\right)^{c}\leq \left(M+\log(r+1)\right)^{kc},
	\end{align}
	where $M:=\max_{j\in\{1,\ldots,k\}}\log\left(\left|n_j\right|\right)$. Since 
	\[
	\sum_{r=1}^\infty \frac{\left(M+\log(r+1)\right)^{kc}}{r^2}<\infty,
	\]
	a similar argument as in the proof of	Lemma \ref{cons} shows that $S$ is conservative. Since $S$ is a conservative $K$-automorphism, by Proposition \ref{ST-K} it is hence ergodic.  Hence $T$ is power weakly mixing. 
\end{proof}

 We will also adapt the  methods used to prove Lemma \ref{cons} to give a  sufficient criteria for a direct product of Maharam extensions to be conservative. 
\begin{proposition}\label{prop: Maharam prop}
	Let $\ns{ \left(\Omega_i,\F_i,\m_i,T_i\right) }_{i=1} ^k$  be $k$ nonsingular maps on probability spaces. If there exists $1<p<2$ such that 
	\begin{equation}\label{V is D}
		\sum_{n=1}^\infty n^{-p} \prod_{i=1}^k \int_{\Omega_i} \left(\frac{1}{\left(T_i^n\right)'}\right)^2d\m_i<\infty,
	\end{equation}
	then the $k$-fold direct product of the Maharam extensions is conservative. 
\end{proposition}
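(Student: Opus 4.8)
The plan is to mimic, in the setting of the infinite measure preserving transformation $S:=\tilde T_1\times\cdots\times\tilde T_k$ on $X:=\prod_{i=1}^k(\Omega_i\times\R)$ preserving $\nu:=\bigotimes_{i=1}^k\nu_i$ (with $\nu_i(A\times I)=\mu_i(A)\int_I e^u\,du$), the second--moment/Borel--Cantelli argument of Lemma \ref{cons}. I would first record two structural reductions. The translation group $R_v(x_1,u_1,\dots,x_k,u_k):=(x_1,u_1+v_1,\dots,x_k,u_k+v_k)$ is $\nu$-nonsingular and commutes with $S$; hence the conservative part of $S$ is $R_v$-invariant for every $v\in\R^k$, so it has the form $C_0\times\R^k$ with $C_0\subset\prod_i\Omega_i$ invariant under $T_1\times\cdots\times T_k$. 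Consequently it suffices to prove that the box $W:=\prod_{i=1}^k(\Omega_i\times(-\infty,0])$, which satisfies $\nu(W)=1$ and projects onto the entire base $\prod_i\Omega_i$, is recurrent: recurrence forces $C_0$ to be conull, hence $S$ conservative.

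Next I would compute the return masses explicitly. Using $S^n(x_1,u_1,\dots)=(T_1^nx_1,u_1-\log(T_1^n)'(x_1),\dots)$ and the product form of $\nu$, integrating out the $u$-variables gives, for $m<n$,
\[
q_n:=\nu(W\cap S^{-n}W)=\prod_{i=1}^k\int_{\Omega_i}\min\!\big(1,(T_i^n)'(x_i)\big)\,d\mu_i(x_i),\qquad q_{m,n}:=\nu(W\cap S^{-m}W\cap S^{-n}W)=\prod_{i=1}^k\int_{\Omega_i}\min\!\big(1,(T_i^m)'(x_i),(T_i^n)'(x_i)\big)\,d\mu_i(x_i).
\]
For the first moment, Markov's inequality applied to $((T_i^n)')^{-2}$, together with the Cauchy--Schwarz bound $a_n^{(i)}:=\int_{\Omega_i}((T_i^n)')^{-2}\,d\mu_i\ge1$, shows $\mu_i\{(T_i^n)'\ge(2a_n^{(i)})^{-1/2}\}\ge\tfrac12$, whence $\int_{\Omega_i}\min(1,(T_i^n)')\,d\mu_i\gtrsim (a_n^{(i)})^{-1/2}$ and so $q_n\gtrsim b_n^{-1/2}$ with $b_n:=\prod_i a_n^{(i)}$. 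The same Markov bound gives $\mu_i\{(T_i^n)'<n^{-p/2}\}\le n^{-p}a_n^{(i)}\le n^{-p}b_n$, which by the hypothesis $\sum_n n^{-p}b_n<\infty$ and the first Borel--Cantelli lemma yields, $\mu_i$-a.e., $(T_i^n)'(x_i)\ge n^{-p/2}$ eventually; here the constraint $1<p<2$ is what matters, since $\sum_n n^{-p/2}=\infty$ (and $p>1$ makes the hypothesis non-vacuous), exactly as in Lemma \ref{cons}. For the second moment I would use the cocycle identity $(T_i^n)'(x)=(T_i^{n-m})'(T_i^mx)\,(T_i^m)'(x)$, the elementary inequality $\min(1,a,ab)\le a\min(1,b)$, and the change of variables $\int\psi(T_i^mx)(T_i^m)'(x)\,d\mu_i(x)=\int\psi\,d\mu_i$, to obtain $\int_{\Omega_i}\min(1,(T_i^m)',(T_i^n)')\,d\mu_i\le\int_{\Omega_i}\min(1,(T_i^{n-m})')\,d\mu_i$, i.e. $q_{m,n}\le q_{n-m}$.

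With these estimates I would attempt to close the argument by a Kochen--Stone / Paley--Zygmund estimate for the events $A_n:=\{S^n\omega\in W\}$ inside $(W,\nu|_W)$, combining the lower bound on $\sum_{n\le N}q_n$ coming from the interpolation $n^{-p/3}=(n^{-p}b_n)^{1/3}(b_n^{-1/2})^{2/3}$ (Hölder with exponents $3,\tfrac32$, using $p/3<1$) with the second--moment control, and then invoking the symmetry reduction of the first paragraph to upgrade positive--measure recurrence of $W$ to full conservativity of $S$. The hard part will be this last passage, and it is the main obstacle: the crude estimate $q_{m,n}\le q_{n-m}$ is not a genuine decoupling $q_{m,n}\lesssim q_m\,q_{n-m}$, and when the $a_n^{(i)}$ are unbounded it is too lossy for Kochen--Stone, because the Radon--Nikodym cocycles $(T_i^n)'$ are controlled only from below by the hypothesis and not from above, so the $\R^k$-valued walk can make large excursions and still return. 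To overcome this I would work conditionally on the base $\sigma$-algebra $\sigma(x_1,\dots,x_k)$ — where the residual randomness in the $u$-coordinates is a product of independent (reflected) exponentials and the returns to $W$ become a bona fide Borel--Cantelli problem — and feed in the $\mu_i$-a.e. eventual bound $(T_i^n)'(x_i)\ge n^{-p/2}$ together with the divergence of $\sum_n q_n$; if necessary I would replace $W$ by an $n$-dependent target box $\prod_i(\Omega_i\times(-c_n,0])$ to sharpen the correlation estimate.
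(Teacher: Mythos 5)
Your preparatory estimates are correct as far as they go --- the identity $q_n=\nu(W\cap S^{-n}W)=\prod_{i=1}^k\int_{\Omega_i}\min\bigl(1,(T_i^n)'\bigr)d\m_i$, the cocycle bound $q_{m,n}\le q_{n-m}$, and the H\"older interpolation forcing $\sum_n q_n=\infty$ from \eqref{V is D} are all fine --- but the proof does not close, and the step you flag as ``the hard part'' is precisely the missing idea, not a technicality. Two concrete obstructions. First, Kochen--Stone with the only correlation control you have, $q_{m,n}\le q_{n-m}$, gives $\sum_{m,n\le N}q_{m,n}\lesssim N\sum_{j\le N}q_j$, so the relevant ratio is $\lesssim N^{-1}\sum_{n\le N}q_n$, which tends to $0$ whenever $q_n\to0$ (the case of interest); and even if it succeeded it would only produce a \emph{positive-measure} set of recurrent points of $W$, i.e.\ $\m(C_0)>0$, which cannot be upgraded to $C_0$ conull because no ergodicity of $T_1\times\cdots\times T_k$ is assumed. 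Your own reduction needs \emph{almost every} point of $W$ to be recurrent, and conditioning on the base shows what that actually requires: for a.e.\ base point $x$, the vector $\bigl(\log (T_i^n)'(x_i)\bigr)_{i\le k}$ must exceed $-\e\mathbf{1}$ coordinatewise for infinitely many $n$, for \emph{every} $\e>0$. The a.e.\ eventual bound $(T_i^n)'(x_i)\ge n^{-p/2}$ decays to $0$ and gives nothing of the sort, so the proposed conditional Borel--Cantelli repair targets a statement your estimates cannot reach.

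The idea you are missing is a change of reference measure on the extension. Replace each Maharam measure by the equivalent probability measure $\nu_i^{\alpha}(A\times I)=\m_i(A)\int_I\frac{1}{2\alpha}e^{-\alpha|u|}du$ of \eqref{rep-Mm}, with $\alpha=(2-p)/p$ so that $p(1+\alpha)=2$. By Lemma \ref{Cl: For T}, the Radon--Nikodym derivative of the $n$-th iterate of the product of the extensions with respect to the product of these probability measures is bounded below, \emph{independently of the fibre variables}, by $\mathbf{A}(\mathbf{w},n)=\prod_{i=1}^k\min\bigl(((T_i^n)'(w_i))^{1-\alpha},((T_i^n)'(w_i))^{1+\alpha}\bigr)$, so Hopf's criterion reduces conservativity to $\sum_n\mathbf{A}(\mathbf{w},n)=\infty$ a.e. By Lemma \ref{Claim: V is D} (H\"older), $\int\mathbf{A}(\cdot,n)^{-p}d\m\le 2^k\prod_i\int((T_i^n)')^{-2}d\m_i$, and then hypothesis \eqref{V is D}, Markov's inequality and Borel--Cantelli give $\mathbf{A}(\mathbf{w},n)\ge 1/n$ eventually a.e., so the harmonic series finishes the proof. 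This is exactly the scheme of Lemma \ref{cons}, run on the extension rather than on the base; it bypasses return-time correlations and the positive-versus-full measure issue entirely, which is what your return-set approach could not do.
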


The following application of H\"older's inequality will be used in our proof of Proposition \ref{prop: Maharam prop}.
\begin{lemma}\label{Claim: V is D}
	Let $\left(\Omega,\mathcal{F},\m,T\right)$ be a nonsingular dynamical system with $\m(\X)=1$.   Then for all $0 <  \alpha\leq \beta$, we have
	\[
	1\leq \int_\X \left(T'\right)^{-\alpha}d\m\leq \int_\X \left(T'\right)^{-\beta}d\m,
	\]
	where the first equality is obtained if and only if $T$ is measure-preserving. 
\end{lemma}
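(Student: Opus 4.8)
The plan is to deduce everything from two classical inequalities for the probability measure $\m$, both instances of H\"older's inequality: first, the monotonicity of $L^{p}$-norms (Lyapunov's inequality), which handles the chain $\int_{\X} (T')^{-\alpha}\,d\m\le \int_{\X} (T')^{-\beta}\,d\m$; and second, Jensen's inequality applied to the convex function $t\mapsto t^{-\gamma}$, which supplies the lower bound $\int_{\X} (T')^{-\gamma}\,d\m\ge 1$ that both gives the first asserted inequality (take $\gamma=\alpha$) and upgrades Lyapunov's inequality into the stated monotonicity.

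The one input I would record at the outset is that
\[
\int_{\X} T'\,d\m=\int_{\X} \frac{d\m\circ T}{d\m}\,d\m=(\m\circ T)(\X)\le \m(\X)=1 .
\]
Since $t\mapsto t^{-\gamma}$ is convex on $(0,\infty)$ and $T'>0$ $\m$-a.e., Jensen's inequality for $\m$ then gives
\[
\int_{\X} (T')^{-\gamma}\,d\m\ \ge\ \Big(\int_{\X} T'\,d\m\Big)^{-\gamma}\ \ge\ 1\qquad\text{for every }\gamma>0,
\]
in particular $1\le\int_{\X} (T')^{-\alpha}\,d\m$. For the second inequality I would apply H\"older's inequality to $\m$ with conjugate exponents $\beta/\alpha$ and $\beta/(\beta-\alpha)$ to obtain Lyapunov's inequality $\int_{\X} (T')^{-\alpha}\,d\m\le\big(\int_{\X} (T')^{-\beta}\,d\m\big)^{\alpha/\beta}$; if $\int_{\X} (T')^{-\beta}\,d\m=\infty$ there is nothing to prove, and otherwise, because $\int_{\X} (T')^{-\beta}\,d\m\ge 1$ and $0<\alpha/\beta\le 1$, the right-hand side is at most $\int_{\X} (T')^{-\beta}\,d\m$.

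For the equality case, suppose $\int_{\X} (T')^{-\alpha}\,d\m=1$. Then equality must hold in the Jensen step above, and since $t\mapsto t^{-\alpha}$ is \emph{strictly} convex this forces $T'$ to equal a constant $c$ for $\m$-a.e.\ point; then $c^{-\alpha}=\int_{\X} (T')^{-\alpha}\,d\m=1$ gives $c=1$, i.e.\ $\frac{d\m\circ T}{d\m}=1$ $\m$-a.e., which is exactly the assertion that $T$ is measure-preserving. The converse is trivial, since then $T'\equiv 1$ and the integral equals $1$.

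This is an essentially routine argument; the only point deserving care is the identity $\int_{\X} T'\,d\m=(\m\circ T)(\X)$ together with $(\m\circ T)(\X)\le 1$, which is what makes the Jensen bound land exactly on $1$ rather than on a smaller number, and the (standard) identification of ``$T'$ is $\m$-a.e.\ equal to $1$'' with ``$T$ is measure-preserving''. I do not anticipate any real obstacle.
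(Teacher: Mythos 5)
Your proof is correct and follows essentially the same route as the paper: the upper bound via H\"older/Lyapunov with exponent $\beta/\alpha$ combined with the fact that the $\beta$-integral is at least $1$ is identical, and your Jensen argument (with strict convexity handling the equality case) is just a repackaging of the paper's H\"older splitting $1=\int_\X (T')^{-\gamma}(T')^{\gamma}\,d\m$ with $\int_\X T'\,d\m=1$ and its equality condition. The only cosmetic difference is that you allow $\int_\X T'\,d\m\le 1$ rather than asserting equality, which is harmless since the monotonicity of $t\mapsto t^{-\gamma}$ still lands the Jensen bound at $1$.
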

\begin{proof}
	Set $\gamma=\alpha/(1+\alpha)$.   By H\"older's inequality, with $p=1/\gamma$ and $q=1/(1-\gamma)$, we have
	\begin{align*}
		1=\int_\X \left(T'\right)^{-\gamma}\left(T'\right)^\gamma d\m &\leq \left(\int_\X T'd\m\right)^{1/p}\left(\int_\X \left(T'\right)^{-\gamma q}d\m\right)^{1/q}\\
		&=\left(\int_\X \left(T'\right)^{-\alpha}d\m\right)^{1/q},
	\end{align*}
	since $\alpha=\gamma/(1-\gamma)$ and $\int_\X T'd\m=1$. 
	
	Recall that equality in  H\"older's inequality is obtained  if and only if there is $\delta>0$ such that $(T')^\gamma=\delta (T')^{-\gamma}$ almost everywhere which is equivalent to the requirement that  $$\sqrt{T'}=\delta^{\frac{1}{2\gamma}}(T')^{-1/2}.$$ Multiplying the above equations by $\sqrt{T'}$ on both sides we see that there exists $\delta$ such that $T'=\delta^{\frac{1}{2\gamma}}$ almost everywhere. Since $\m \circ T(\X)=1=\delta^{\frac{1}{2\gamma}} \m(\X)$, we deduce that $T$ is measure-preserving.  
	
	For the upper bound,  set $p=\beta/\alpha$.   By H\"older's inequality, 
	\[
	1\leq \int_\X \left(T'\right)^{-\alpha}d\m\leq \left(\int_\X \left(T'\right)^{-\beta}d\m\right)^{\alpha/\beta}\leq \int_\X \left(T'\right)^{-\beta}d\m.
	\]
\end{proof}
 In order to apply Hopf's  criteria in our proof of Proposition \ref{prop: Maharam prop}, we will replace the regular Maharam measure by an equivalent probability measure. 
Let $(\X,\F,\m)$ be a probability space.  Let $\alpha >0$ and     $\nua$ be  the probability measure on $\X\times\bb{R}$  given by
\begin{equation}
	\label{rep-Mm}
\nua(A \times I ) = \m(A) \int_I\frac{1}{2\alpha}e^{-\alpha|u|}du
\end{equation}
for $ A \in \F$ and an interval $I \subset \R$.

\begin{lemma}\label{Cl: For T}
	Let $(\X,\F,\m,T)$ be a nonsingular dynamical system and $\tilde{T}:\X\times\bb{R}\to \X\times\bb{R}$ its Maharam extension.  Let $\alpha >0$ and $\nu^{\alpha}$ be defined as in \eqref{rep-Mm}.   Then for $\nu^{\alpha}$-almost every $(x,u)$, we have
	\[
	\frac{d\nua\circ \tilde{T}}{d\nua}(x,u)=T'(x)\exp[-\alpha\left(\left|t+\log T'(x)\right|-|u|\right)], 
	\]
	where $T'(x)=\frac{dm\circ T}{dm}(x)$.  In particular, $\tilde{T}$ is nonsingular with respect to $\nua$.  
\end{lemma}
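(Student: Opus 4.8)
The plan is to reduce the computation to Maharam's theorem (\cite{MR169988}), which tells us that the extension $\tilde{T}$ preserves the measure $\nu$ on $\X\times\R$ given by $d\nu(x,u)=e^{u}\,d\m(x)\,du$; in particular $\tilde{T}$ is $\nu$-nonsingular with $\tfrac{d\nu\circ\tilde{T}}{d\nu}\equiv 1$. Write $p_\alpha(u):=\tfrac{1}{2\alpha}e^{-\alpha|u|}$, so that $d\nua(x,u)=p_\alpha(u)\,d\m(x)\,du$; hence $\nua\sim\nu$ with density
\[
q(x,u):=\frac{d\nua}{d\nu}(x,u)=\frac{p_\alpha(u)}{e^{u}}=\frac{1}{2\alpha}\,e^{-\alpha|u|-u},
\]
a function of the second coordinate only. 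Since $\tilde{T}$ is $\nu$-nonsingular and $\nua\sim\nu$, the map $\tilde{T}$ is automatically $\nua$-nonsingular, which is the last assertion of the lemma. For the density, the cocycle chain rule
\[
\frac{d\nua\circ\tilde{T}}{d\nua}=\Big(\frac{d\nua}{d\nu}\circ\tilde{T}\Big)\cdot\frac{d\nu\circ\tilde{T}}{d\nu}\cdot\frac{d\nu}{d\nua}
\]
collapses, using $\tfrac{d\nu\circ\tilde{T}}{d\nu}\equiv1$ and $\tfrac{d\nu}{d\nua}=1/q$, to $\tfrac{d\nua\circ\tilde{T}}{d\nua}(x,u)=q\big(\tilde{T}(x,u)\big)/q(x,u)$.

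It then remains to substitute $\tilde{T}(x,u)=\big(Tx,\,u-\log T'(x)\big)$ and simplify. Since $q$ depends only on the second coordinate, and the second coordinate of $\tilde{T}(x,u)$ is $u-\log T'(x)$,
\[
\frac{q\big(\tilde{T}(x,u)\big)}{q(x,u)}=\frac{e^{-\alpha|u-\log T'(x)|-(u-\log T'(x))}}{e^{-\alpha|u|-u}}=T'(x)\,\exp\!\big[-\alpha\big(|u-\log T'(x)|-|u|\big)\big],
\]
which is the formula recorded in the statement. Finiteness and strict positivity of this density --- recall that $0<T'(x)<\infty$ for $\m$-almost every $x$, since $T$ is nonsingular --- furnish a second proof that $\tilde{T}$ is $\nua$-nonsingular.

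I do not expect a real obstacle here: the two ingredients are the invocation of Maharam's theorem for the invariance of $\nu$ and the bookkeeping in the chain rule, after which the whole statement is a one-line substitution. A reader who prefers to sidestep the chain rule can instead argue directly: for $A\in\F$ and a bounded interval $I\subset\R$, write $\tilde{T}(A\times I)=\{(y,v):\ y\in TA,\ v+\log T'(T^{-1}y)\in I\}$, integrate $p_\alpha$ over this set, translate in the $v$-variable, and then change variables $y=Tx$ (the latter producing the Jacobian $\tfrac{d\m\circ T}{d\m}(x)=T'(x)$ under the convention $\m\circ T(\cdot)=\m(T\,\cdot)$ used in the paper); reading off the integrand over $A\times I$ and using that such rectangles generate $\F\otimes\borel(\R)$ gives the same density.
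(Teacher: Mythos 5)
Your proof is correct, but it is organized differently from the paper's. The paper disposes of the lemma in one line, viewing $\nua$ as the product of $\m$ with the density $p_\alpha(u)=\tfrac{1}{2\alpha}e^{-\alpha|u|}$ and invoking a ``straightforward variation'' of the product Radon--Nikodym formula \eqref{prod-RN} --- i.e.\ exactly the direct rectangle computation you sketch in your last paragraph (integrate over $\tilde{T}(A\times I)$, translate in the fibre variable, change variables $y=Tx$). Your main route instead factors the computation through the invariance of the standard Maharam measure $d\nu=e^u\,d\m\,du$ together with the cocycle chain rule $\frac{d\nua\circ\tilde{T}}{d\nua}=\frac{q\circ\tilde{T}}{q}\cdot\frac{d\nu\circ\tilde{T}}{d\nu}$ with $q=d\nua/d\nu$ a function of the second coordinate only; this is arguably cleaner, since it isolates the single fact that $\nu$ is $\tilde{T}$-invariant (recorded in Section \ref{ME-Z} of the paper) and reduces everything else to a ratio of fibre densities. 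One small attribution point: the invariance of $\nu$ is an elementary computation rather than ``Maharam's theorem'' (the cited result of Maharam is the equivalence of conservativity of $T$ and of $\tilde{T}$), but since the paper states the invariance explicitly, citing it is harmless.

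Two cosmetic flags, neither a gap in your argument. First, your final expression has $|u-\log T'(x)|$ where the lemma as printed has $|t+\log T'(x)|$; the ``$t$'' is plainly a typo for ``$u$'', and under the paper's conventions ($\m\circ T(A)=\m(TA)$ and $\tilde{T}(x,u)=(Tx,\,u-\log T'(x))$) your sign is the correct one --- one can sanity-check it by replacing $p_\alpha(u)$ with $e^u$, which must return the constant density $1$. So you should not claim your formula ``is the formula recorded in the statement'' verbatim, but rather note the discrepancy; in any case the only consequence drawn later is the lower bound $T'(x)e^{-\alpha|\log T'(x)|}$, which is insensitive to the sign. Second, your closing remark that positivity and finiteness of the density ``furnish a second proof'' of nonsingularity is fine, but the first proof (equivalence $\nua\sim\nu$ plus $\nu$-nonsingularity) already suffices, as you say.
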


\begin{proof}
	The  lemma follows from a straightforward variation of the proof of the following familiar formula
	\begin{equation}
		\label{prod-RN}
		\frac{d(\rho_1 \otimes \rho_2)}{d(\m_1 \otimes \m_2)}(x_1,x_2) = \frac{d\rho_1}{d\mu_1}(x_1) \frac{d\rho_2}{d\mu_2}(x_2)
	\end{equation}
	that applies when $\rho_i$ is dominated by $\mu_i$.  
		\end{proof}

The proof of Proposition \ref{prop: Maharam prop} will make use of Lemmas \ref{Claim: V is D} and \ref{Cl: For T} and some elementary inequalities. 
In the course of the proof for each transformation there will be one probability measure associated to it and as usual $R'$ will denote the Radon-Nydodym derivative of $R$ with respect to its measure. 
\begin{proof}[Proof of Proposition \ref{prop: Maharam prop}]
	Let $\alpha=\frac{2-p}{p}$ such that $p(1+\alpha)=2$.   For each $1\leq i \leq k$,   let   $\nua_i$ be the probability measure on $\Omega_i\times \bb{R}$ be given by \eqref{rep-Mm}.   Let $\tilde{T}_i$ denote the Maharam extension of $T_i$.   Since $\nua_i$ is equivalent to the usual Maharam measure on $\Omega_i \times \R$ it suffices to show conservativity of the product transformation 
	$\mathbf{S}=\tilde{T_1}\times\cdots\times \tilde{T_k}$ on the space 
	$$\mathbf{\Omega} \times \R^k = \left(\Omega_1\times\bb{R}\right)\times\cdots \times \left(\Omega_k\times\bb{R}\right)$$ with respect to the measure  $\rho  = \nua_1 \otimes \cdots \otimes \nua_k$.

	 By Lemma \ref{Cl: For T}, for $\nua_i$-almost every $(w_i,u)\in\Omega_i\times\bb{R}$ and  for all $n\in\bb{N}$, we have 
\begin{align*}
\frac{d\nua_i\circ \tilde{T_i}^n}{d\nua_i}(w,u)&=\left( T_i^n\right)'(w_i)e^{-\alpha\left(\left|u+\log\left( T_i^n\right)'(w_i)\right|-|u|\right)}\\
&\geq \left( T_i^n\right)'(w_i)e^{-\alpha\left|\log\left( T_i^n\right)'(w_i)\right|}\\
&= \min\big(\left( T_i^n\right)'(w_i)^{1-\alpha},\left( T_i^n\right)'(w_i)^{1+\alpha}\big)\\
&=:A_i(w_i,n),
\end{align*}
	since $e^{-|a|} = \min\ns{ e^{-a}, e^a}$.  Notice that the final expression no longer depends on variable $u \in \R$.  
	Moreover,  applying formula \eqref{prod-RN} gives  that for   $\rho$-almost every $(\mathbf{w}, \mathbf{u}) = [ (w_1, \ldots, w_k), (u_1, \ldots, u_k)]  \in \mathbf{\Omega} \times \R^k$ and    for all $n\in\bb{N}$, we have 
	\begin{align*}
	\frac{dm\circ \mathbf{S}^n}{dm}(\mathbf{w}, \mathbf{u})\geq \prod_{i=1}^k 	A_i(w_i,n) 
	&=:  \mathbf{A}(\mathbf{w},n).
	\end{align*}
Let  $\m =  \m_1 \otimes \cdots  \otimes \m_k$.  The Hopf criteria  \cite[Proposition 1.3.1]{AaroBook} together with the last inequality  tells us that if for $\mu$-almost every $\mathbf{w} \in \mathbf{\Omega}$, we have
	\begin{equation}\label{V is D sufficient cond}
		\sum_{n=1}^\infty \mathbf{A}(\mathbf{w},n)=\infty,
	\end{equation}
	then $\mathbf{S}$ is conservative with respect to $\rho$, as desired.

	It remains to verify \eqref{V is D sufficient cond}.   For all $n\in\bb{N}$, we have
	\begin{align*}
		\int_{\mathbf{\Omega}} &\left(\mathbf{A}(\mathbf{w},n)\right)^{-p}d\mu(\mathbf{w}) =\prod_{i=1}^k \int_{\Omega_i} A_i(w_i,n)^{-p}d\m_i(w_i) \nonumber\\
		&=\int_{\Omega_i} \max\left(\left(\frac{1}{\left( T_i^n\right)'(w_i)}\right)^{p(1-\alpha)},\left(\frac{1}{\left( T_i^n\right)'(w_i)}\right)^{p(1+\alpha)}\right) d\m_i(w_i) \\
		&\leq\prod_{i=1}^k \int_{\Omega_i}\left(\left(\frac{1}{\left( T_i^n\right)'(w_i)}\right)^{p(1-\alpha)}+\left(\frac{1}{\left( T_i^n\right)'(w_i)}\right)^{p(1+\alpha)}\right)d\m_i(w_i).\nonumber
	\end{align*}
	Since $0 <p(1-\alpha)<p(1+\alpha)=2$,  by Lemma \ref{Claim: V is D}, for all $1\leq i\leq k$, we have
	\begin{align*}
		\int_{\Omega_i}\left(\frac{1}{\left( T_i^n\right)'(w_i)}\right)^{p(1-\alpha)}d\m_i(w_i)& \leq  \int_{\Omega_i}\left(\frac{1}{\left( T_i^n\right)'(w_i)}\right)^{p(1+\alpha)}d\m_i(w_i)\\
		&=\int_{\Omega_i}\left(\frac{1}{\left( T_i^n\right)'(w_i)}\right)^{2}d\m_i(w_i).
	\end{align*}
	Hence
	\begin{align}
		\int_{\mathbf{\Omega}} \left( \mathbf{A}(\mathbf{w},n)\right)^{-p}d\mu(\mathbf{w})&\leq 2^k \prod_{i=1}^k\int_{\Omega_i}\left(\frac{1}{\left( T_i^n\right)'(w_i)}\right)^{2}d\m_i(w_i).
	\end{align}
	Setting
	\[
	B_n=\left\{\mathbf{w} \in\mathbf{\Omega}: \mathbf{A}(\mathbf{w},n)\leq n^{-1}\right\}=\left\{\mathbf{w}\in\mathbf{\Omega}: \mathbf{A}(\mathbf{w},n)^{-p}\geq n^p\right\}
	\]
	it follows from Markov's inequality that
	\begin{align*}
		\sum_{n=1}^\infty \mu\left(B_n\right)&\leq \sum_{n=1}^\infty n^{-p}\int_{\mathbf{\Omega}} \left(\mathbf{A}(\mathbf{w},n)\right)^{-p}d\mu\\
		&\leq 2^k\sum_{n=1}^\infty n^{-p}\prod_{i=1}^k\int_{\Omega_i}\left(\frac{1}{\left( T_i^n\right)'(w_i)}\right)^{2}d\m_i(w_i)<\infty.
	\end{align*}
	Thus as in the proof of Lemma \ref{cons},
	 the Borel-Cantelli lemma and a simple comparison test implies  \eqref{V is D sufficient cond}.
\end{proof}

We will apply Proposition \ref{prop: Maharam prop} to the Bernoulli shifts in Theorem \ref{main-exists}, but first we  will need a version for the discrete Maharam extension.  While one can reiterate the same proof with minor modifications we argue via the following well-known lemma. 

\begin{lemma}\label{lem: classical}
	Let $\ns{ \left(\Omega_i,\F_i,\m_i,T_i\right) }_{i=1} ^k$  be $k$ nonsingular maps on probability spaces. Suppose that for some $0<\lambda<1$, for all $1\leq i \leq k$, we have  
	\[
	\log_{\lambda}(T_i)'\in \Z \ \  \text{almost surely.}
	\]
	Then the $k$-fold direct product of the Maharam extensions is conservative if and only if the $k$-fold direct product of the discrete  Maharam extensions of the $\Z$-actions is conservative. 
\end{lemma}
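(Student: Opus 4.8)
The plan is to reduce everything to the elementary fact that, for a nonsingular transformation $S$ on a $\sigma$-finite measure space and a \emph{finite} measure space $(Y,m')$, the product $S\times\mathrm{id}_Y$ is conservative if and only if $S$ is. The bridge to this fact is the standard observation that, once $\log_\lambda(T_i)'$ is $\Z$-valued, the continuous Maharam extension of $T_i$ is isomorphic (as a measure-preserving system) to the discrete Maharam extension of $T_i$ times an identity map on an interval.

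First I would fix one index $i$, set $L:=\log(1/\lambda)>0$ and $\varphi_i:=\log_\lambda(T_i)'$, and use the unique decomposition $u=n\log\lambda+t$ with $n\in\Z$ and $t\in[0,L)$ to build a bijection $\iota_i\colon\Omega_i\times\R\to\Omega_i\times\Z\times[0,L)$, $(x,u)\mapsto(x,n,t)$. Writing $\hat T_i(x,n):=(T_ix,\,n-\varphi_i(x))$ for the discrete Maharam extension (which preserves $\hat\mu_i(A\times\{n\}):=\lambda^n\mu_i(A)$) and using that $(T_i)'=\lambda^{\varphi_i}$, one checks directly that $\iota_i$ conjugates the Maharam extension $\tilde T_i$ to $\hat T_i\times\mathrm{id}_{[0,L)}$, since $u-\log(T_i)'(x)=(n-\varphi_i(x))\log\lambda+t$. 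A one-line substitution then shows that $\iota_i$ carries the Maharam measure $\nu_i$ of $T_i$ to $\hat\mu_i\otimes m$, where $m(dt)=e^t\,dt$ is a finite measure on $[0,L)$: indeed $\nu_i(\iota_i^{-1}(A\times\{n\}\times J))=\mu_i(A)\int_J e^{n\log\lambda+t}\,dt=\lambda^n\mu_i(A)\int_J e^t\,dt$.

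Next I would take the $k$-fold product and permute coordinates: $\iota_1\times\cdots\times\iota_k$ carries $\tilde T_1\times\cdots\times\tilde T_k$ to $(\hat T_1\times\cdots\times\hat T_k)\times\mathrm{id}_{[0,L)^k}$ and the product Maharam measure to $(\bigotimes_i\hat\mu_i)\otimes m^{\otimes k}$, with $m^{\otimes k}$ finite. The conclusion then follows from the quoted fact, which I would prove in two lines: if $S\times\mathrm{id}_Y$ is conservative then so is $S$, since a positive-measure wandering set $W$ for $S$ gives the positive-measure wandering set $W\times Y$ for $S\times\mathrm{id}_Y$; conversely, if $S$ is conservative and the sets $(S\times\mathrm{id}_Y)^{-n}W$ are pairwise disjoint, then for $m'$-almost every $y$ the sets $S^{-n}W_y$ (with $W_y=\{x:(x,y)\in W\}$) are pairwise disjoint, so each $W_y$ is a wandering set for $S$ and hence null, whence $W$ is null by Fubini. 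Applying this with $S=\hat T_1\times\cdots\times\hat T_k$ and $Y=[0,L)^k$ finishes the proof.

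I do not anticipate a real obstacle: the only thing requiring care is the bookkeeping in the first two steps, namely checking that the group splitting $\R=\log\lambda\cdot\Z\oplus[0,L)$ intertwines $\tilde T_i$ with $\hat T_i\times\mathrm{id}$ and that the Maharam measure splits as $\hat\mu_i\otimes m$. One could instead simply cite this identification from the literature as the standard relationship between the $\R$-Maharam extension and the $\Z$-Maharam extension in the type-$\mathrm{III}_\lambda$ setting, in which case the lemma is immediate from the conservativity statement for $S\times\mathrm{id}_Y$.
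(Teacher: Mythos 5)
Your proposal is correct, but it takes a genuinely different route from the paper. The paper views the $k$-fold product of the (continuous) Maharam extensions as the $\R^k$-skew product of $T_1\times\cdots\times T_k$ by the cocycle $\alpha=\big(\log (T_i^n)'\big)_i$, and the product of the discrete extensions as the $\Z^k$-skew product by $\beta=(\log\lambda)^{-1}\alpha$; it then cites Schmidt's theorem that such a skew product is conservative if and only if $\mathbf 0$ is an essential value of the cocycle, and concludes because proportional cocycles have $\mathbf 0$ as an essential value simultaneously. You instead exploit the group splitting $\R=(\log\lambda)\Z\oplus[0,\log\lambda^{-1})$, which (as you verify) conjugates each continuous Maharam extension to the discrete one times the identity on $[0,\log\lambda^{-1})$ and carries the Maharam measure to $\tilde\mu_i\otimes e^t\,dt$, so the lemma reduces to the elementary observation that crossing with the identity on a finite (in fact any $\sigma$-finite) measure space does not affect conservativity, which you prove by the wandering-set/Fubini argument. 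Your computations of the conjugacy and of the image measure are right, and the two directions of the product-with-identity fact are argued correctly; the only thing worth making explicit is the standard Hopf-decomposition equivalence between the recurrence definition of conservativity used in the paper and the absence of positive-measure wandering sets, which your argument uses on both sides. In exchange for that small amount of bookkeeping, your proof is self-contained and avoids invoking Schmidt's essential-value characterization, whereas the paper's proof is shorter given that citation and stays within the essential-value formalism it uses elsewhere.
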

\begin{proof} 
Let $T$, $S$ and $R$ be the $k$-fold direct product of the $T_i$, the $k$-fold direct product of the Maharam extensions and the $k$-fold direct product of the discrete  Maharam extensions, respectively. Then $S$ is isomorphic to the $\bb{R}^k$-skew product extension of $T$ by the cocycle
\[
\alpha\left(n,\left(w_1,w_2,\ldots,w_n\right)\right)=\left(\log\left(T_1^n\right)'\left(w_1\right),\ldots,\log\left(T_k^n\right)'\left(w_k\right)\right).
\]
Similarly, $R$ is isomorphic to the $\bb{Z}^k$-skew product extension of $T$ by the cocycle
\[
\beta\left(n,\left(w_1,w_2,\ldots,w_n\right)\right)=\left(\log_\lambda\left(T_1^n\right)'\left(w_1\right),\ldots,\log_\lambda\left(T_k^n\right)'\left(w_k\right)\right).
\]
By \cite[Theorem 5.5]{KS-type}, $S$ is conservative if and only if $\mathbf{0}\in\bb{R}^k$ is an essential value for $(T,\alpha)$; that is, writing $\mu=\otimes_{i=1}^k\mu_i$, for every $A\in \F_1\otimes\cdots \otimes\F_k$ with $\mu(A)>0$ and $\epsilon>0$ there exists $n\in\bb{Z}$ such that 
\[
\mu\left(A\cap T^{-n}A\cap \left[|\alpha(n,\cdot)|<\epsilon\right]\right)>0.
\]
Similarly, $R$ is conservative if and only if $\mathbf{0}\in\bb{Z}^k$ is an essential value of $(T,\beta)$. Since $\beta=(\log \lambda)^{-1}\alpha$, the vector  $\mathbf{0}$ is an essential value for $(T,\alpha)$ if and only if $\mathbf{0}$ is an essential value for $(T,\beta)$ and we conclude that $S$ is conservative if and only if $R$ is conservative.
\end{proof}

\begin{corollary}
	\label{cor-fixed}
	Let $T$  be the Bernoulli shift for Theorem \ref{main-exists} defined in Section \ref{def-l}.  Then the discrete  Maharam extension of $T$ is a power weakly mixing $K$-automorphism.
\end{corollary}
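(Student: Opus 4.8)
The $K$-property of the discrete Maharam extension $\tilde T$ of the shift $T$ from Section \ref{def-l} is exactly Theorem \ref{duality-checked}, so the plan is to establish power weak mixing, following the scheme of the proof of Corollary \ref{PW} but feeding in the Maharam-level conservativity criterion of Proposition \ref{prop: Maharam prop}. Fix $n_1,\dots,n_k\in\Z\setminus\{0\}$ and put $\tilde S:=\tilde T^{n_1}\times\cdots\times\tilde T^{n_k}$. Since the cocycle $\varphi_\mu$ defining $\tilde T$ is additive along $T$-orbits, the power $\tilde T^{\,n}$ is precisely the discrete Maharam extension of $T^{n}$; hence $\tilde S$ is the $k$-fold direct product of the discrete Maharam extensions of the $\Z$-actions generated by $T^{n_1},\dots,T^{n_k}$, and each such map satisfies $\log_\lambda (T^{n_i})'\in\Z$ almost surely by \eqref{lambda}. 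By Theorem \ref{duality-checked} each factor $\tilde T^{n_i}$ is a $K$-automorphism, and both powers and finite direct products of $K$-automorphisms are again $K$-automorphisms (taking the appropriate product of witnessing sub-$\sigma$-algebras, exactly as in the proof of Corollary \ref{PW}); so $\tilde S$ is a $K$-automorphism, and by Lemma \ref{ST-K} it is enough to prove that $\tilde S$ is conservative.

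To do this I would first invoke Lemma \ref{lem: classical} to replace $\tilde S$ by the $k$-fold direct product of the \emph{real} Maharam extensions of $T^{n_1},\dots,T^{n_k}$, and then apply Proposition \ref{prop: Maharam prop} with $(\Omega_i,\mathcal F_i,\m_i,T_i)=(\Omega,\borel,\mu,T^{n_i})$. This requires exhibiting some $1<p<2$ with
\[
\sum_{n=1}^\infty n^{-p}\prod_{i=1}^k \int_\Omega\Big(\frac{1}{(T^{n n_i})'}\Big)^2 d\mu<\infty .
\]
The input is the moment estimate \eqref{eq: VW}, extended to negative powers: rerunning the computation of Lemma \ref{lem: VW} with the (decreasing) sequences $(A_n)$, $(B_n)$ entering on the other side gives, for a suitable constant $c=c(\lambda)>0$ and all nonzero $m\in\Z$,
\[
\int_\Omega\Big(\frac{1}{(T^m)'}\Big)^2 d\mu\le K\,(\log(|m|+1))^{c},
\]
which is the same bound already used in the proof of Corollary \ref{PW}; see \eqref{USE-MAH}. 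Writing $N=\max_i|n_i|$, this yields $\prod_{i=1}^k\int_\Omega(1/(T^{n n_i})')^2\,d\mu\le K^k(\log(nN+1))^{kc}$, and since $(\log n)^{kc}=o(n^{\varepsilon})$ for every $\varepsilon>0$, the displayed series converges for, say, $p=3/2$. Hence $\tilde S$ is conservative, and therefore ergodic; as $n_1,\dots,n_k$ were arbitrary nonzero integers, $\tilde T$ is power weakly mixing, which together with Theorem \ref{duality-checked} gives the assertion.

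I do not expect a genuine obstacle, since the substantive work has been isolated in advance: conservativity of products of Maharam extensions under a summable second-moment hypothesis is Proposition \ref{prop: Maharam prop}, and the passage between real and discrete Maharam extensions is Lemma \ref{lem: classical}. The only points meriting a line of justification are the stability of the $K$-property under powers and finite direct products --- handled as in Corollary \ref{PW} --- and the extension of the bound \eqref{eq: VW} to negative exponents, which is a mechanical rerun of the calculation in the proof of Lemma \ref{lem: VW}.
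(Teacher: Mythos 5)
Your proposal is correct and follows essentially the same route as the paper: $K$-property of $\tilde{T}^{n_1}\times\cdots\times\tilde{T}^{n_k}$ from Theorem \ref{duality-checked} plus stability under powers and finite products, conservativity via Lemma \ref{lem: classical} and Proposition \ref{prop: Maharam prop} using the moment bound \eqref{USE-MAH}, and ergodicity from Lemma \ref{ST-K} (your $p=3/2$ versus the paper's $p=1.1$ is immaterial). In fact you are slightly more careful than the paper on two points it glosses over: identifying $\tilde{T}^{\,n}$ with the discrete Maharam extension of $T^{n}$ via cocycle additivity, and extending the second-moment estimate of Lemma \ref{lem: VW} to negative powers of $T$.
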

\begin{proof}
	Let $n_1,n_2,...,n_k\in\bb{Z}$.  By Theorem \ref{duality-checked}, the product $S=\tilde{T}^{n_1}\times \tilde{T}^{n_2}\times \cdots \times \tilde{T}^{n_k}$  is a $k$-fold product of  $K$-automorphisms, hence is a $K$-automorphism.  By \eqref{USE-MAH}, there exists $C,M>0$ such that
	\[
	\sum_{n=1}^\infty n^{-1.1}\prod_{j=1}^k\int_{\Omega}\left(\frac{1}{\left(T^{rn_j}\right)'}\right)^2d\mu\leq 	\sum_{n=1}^\infty n^{-1.1}(M+\log(n+4))^C<\infty.
	\] 
	 Proposition \ref{prop: Maharam prop} with $p=1.1$, together with Lemma \ref{lem: classical} give that $S$ is conservative. By Proposition \ref{ST-K}, a conservative $K$-automorphism is ergodic, hence $\tilde{T}^{n_1}\times \tilde{T}^{n_2}\times \cdots \times \tilde{T}^{n_k}$ is ergodic. 
\end{proof}

\begin{remark}
	In a previous version of this manuscript, Stefaan Vaes informed us that our previous approach to proving Corollary \ref{cor-fixed} was  problematic.  The corrected approach, given here, relies more specifically on the properties of the Bernoulli shifts given for the proof of Theorem \ref{main-exists}.  \erk
	\end{remark}

\subsection{Type-$\mathrm{III}_0$ Bernoulli shifts}

Type-$\mathrm{III}_0$ systems are much less understood than the other type-$\mathrm{III}$ systems and have not been treated in  this paper.    It is known that one can construct product odometers \cite{MR796751, MR1646621} that are of type-$\mathrm{III}_0$, but little is known about the possibility of a Bernoulli shift of this type.  

\begin{question} 
	\label{QZ}
	 Does there exists a Bernoulli shift that is of type-$\mathrm{III}_0$?
 	\end{question}

\begin{remark}
	Tey Berendschot and Stefaan Vaes informed us that they have also been considering similar problems and they have answered Question \ref{QZ} positively; their results are now available and  presented in   \cite{berendschot2020nonsingular}. \erk
	\end{remark}

\section*{Acknowledgments} 
Zemer Kosloff is funded in part by ISF grant No.\ 1570/17.  We thank the referee for  helpful comments and attention to detail.

 \bibliographystyle{abbrv}
   	\bibliography{embedding}

\end{document}